\def\){\big)}
\def\({\big(}
\def\ga{\alpha}
\def\gb{\beta}
\def\gd{\delta}
\def\gma{\gamma}
\def\gl{\lambda}
\def\bz{\mathbb{Z}}
\def\bc{\mathbb{C}}
\def\fa{\mathfrak{a}}
\def\fb{\mathfrak{b}}
\def\fc{\mathfrak{c}}
\def\fs{\mathfrak{s}}
\def\ft{\mathfrak{t}}
\def\fu{\mathfrak{u}}
\def\cf{\mathcal{F}}
\def\mcr{\mathcal{R}}
\def\sa{\mathscr{A}}
\def\sb{\mathscr{B}}
\def\sh{\mathscr{H}}
\def\sr{\mathscr{R}}
\def\And{\text{\ and\ }}
\def\for{\text{\ for\ }}
\def\If{\text{\ if\ }}
\def\Or{\text{\ or\ }}
\def\otherwise{\text{\ otherwise}}
\def\shape{\text{Shape}}
\def\mod{\mathrm{mod\,}}
\def\gr{\mathrm{gr}}
\def\ker{\mathrm{Ker}}
\def\res{\mathrm{res}}
\def\std{\mathrm{Std}}
\def\mpn{\mathscr{P}(m, n)}
\def\rr{\rightarrow}
\def\la{\langle}
\def\ra{\rangle}
\def\Number#1{\refstepcounter{equation}
              \leqno(\theequation)\if*#1%
              \else\def\@currentlabel{{\rm\theequation}}\label{#1}%
              \fi}
\def\lexp#1#2{\kern\scriptspace\vphantom{#2}^{#1}\kern-\scriptspace#2}
\def\@underbar#1#2{\settowidth{\@tempdimb}{$#1#2$}\@tempdimb=0.8\@tempdimb
                   \ooalign{$#1#2$\crcr%
                         \hfil\rule[-.5mm]{\@tempdimb}{.4pt}\hfil}}
\newdimen\hoogte    \hoogte=14pt    
\newdimen\breedte   \breedte=14pt   
\newdimen\dikte     \dikte=0.5pt    
\newenvironment{Point}[2]%
  {\ifx*#2\let\pointlabel\relax\else\def\pointlabel{#2}\fi
   \refstepcounter{equation}\trivlist
   \item[\hskip\labelsep\theequation.
         \ifx\pointlabel\relax\else\space\pointlabel\space\fi]
   \ignorespaces #1
  }{\relax}
\newenvironment{young}{\begingroup
       \def\vr{\vrule height0.8\hoogte width\dikte depth 0.2\hoogte}
       \def\fbox##1{\vbox{\offinterlineskip
                    \hrule height\dikte
                    \hbox to \breedte{\vr\hfill##1\hfill\vr}
                    \hrule height\dikte}}
       \vbox\bgroup \offinterlineskip \tabskip=-\dikte \lineskip=-\dikte
            \halign\bgroup &\fbox{##\unskip}\unskip  \crcr }
       {\egroup\egroup\endgroup}
\def\diagram#1{\relax\ifmmode\vcenter{\,\begin{young}#1\end{young}\,}\else%
              $\vcenter{\,\begin{young}#1\end{young}\,}$\fi}
\numberwithin{equation}{section}
\newtheorem{theorem}[equation]{Theorem}
\newtheorem{lemma}[equation]{Lemma}
\newtheorem{proposition}[equation]{Proposition}
\newtheorem{corollary}[equation]{Corollary}
\theoremstyle{definition}
\newtheorem{definition}[equation]{Definition}
\newtheorem{example}[equation]{Example}
\newtheorem{assume}[equation]{Assumption}
\theoremstyle{remark}
\newtheorem{remark}[equation]{Remark}
\begin{document}
\setlength{\itemsep}{-0.25cm}
\fontsize{12}{\baselineskip}\selectfont
\setlength{\parskip}{0.34\baselineskip}
\vspace*{-12mm}
\title[Cyclotomic Hecke algebras]{\fontsize{13}{\baselineskip}\selectfont Matrix units and Schur elements\\ \vspace{1.5\jot} for the degenerate cyclotomic Hecke algebras}
\author{Deke Zhao}
\address{\begin{tabular}{l}
          \textsl{School of Applied Mathematics, Beijing Normal University at Zhuhai, Zhuhai, 519087, China}\\
         \textsl{Academy of Mathematics and Systems Science, Chinese Academy of Sciences, Beijing 100190, China}\\
           \textsl{E-mail address: deke@amss.ac.cn}
           \end{tabular}}
\thanks{The author is very grateful to Professors Nanhua Xi and Yang Han for their invaluable helps and constant encouragement.
The author would like to thank Dr.~Ming Fang for his generous support and for some helpful conversations.
This research was supported by the National Natural Science Foundation of China (No.~11001253 and No.~11101037).}
\subjclass[2010]{Primary 16G99; Secondary 20C20, 20G05.}



\keywords{(Degenerate) cyclotomic Hecke algebras; Schur elements; Sepcht modules}
\begin{abstract}
The paper uses the cellular basis of the (semi-simple) degenerate cyclotomic Hecke algebras to investigate these algebras exhaustively. As a consequence, we describe explicitly the ``Young's seminormal form'' and a orthogonal bases for Specht modules and determine explicitly the closed formula for the natural bilinear form on Specht modules and Schur elements for the degenerate cyclotomic Hekce algebras.
\end{abstract}
\maketitle
\vspace*{-10mm}
\section{Introduction}
The cyclotomic Hecke algebras or Ariki-Koike algebras were introduced independently by Ariki and Koike \cite{AK94} who were
interested in them because they are a natural generalization of the Iwahori-Hecke algebras of types $A$ and $B$, and by Brou\'{e} and Malle \cite{BM:cyc} who conjectured these algebras should play a role in the modular representations of finite groups of Lie types. The cyclotomic Hecke algebras  also appear in a different guise in the work of Cherednik \cite{Cherednik} as a family of cyclotomic quotient of the (extended) affine Hecke algebras. Now the cyclotomic Hecke algebras paly a fundamental role in many branches of mathematics and were investigated in various settings.

The cyclotomic Hecke algebras have a natural rational degeneration\leavevmode\vrule height 2pt depth -1.6pt width 15pt degenerate cyclotomic Hecke algebras, which is a family of finite dimensional quotient algebras of the degenerate (extend) affine Hecke algebras, see \cite[Chap.\,7]{Kbook}. Now it turns out that the category of integral representations of degenerate affine Hecke algebras consists precisely of all inflations of all its cyclotomic quotients and
the study of degenerate cyclotomic Hecke algebras has some independent interests, see for example \cite{BK-Schur-Weyl,BK-Math.Z.}.
The purpose of this paper is to investigate exhaustively the degenerate cyclotomic Hecke algebras using the cellular basis of these algebras, some investigations are outlined in Ariki, Mathas and Rui's work \cite[\S 6]{AMR}, which should lead to a better understanding of these algebras.

 The paper is largely inspired by Mathas's work \cite{Mathas-J-Algebra}, which originates form Murphy's classical work \cite{murphy:basis,murphy:hecke}. For semi-simple degenerate cyclotomic Hecke algebras, we explicitly construct the matrix unites of it (Theorem~\ref{Them orthogonal basis}). To do this we give a detailed investigation on Specht modules, which enables us to determine explicitly the ``Young's seminormal form" and a orthogonal bases for Specht modules (Theorem~\ref{Prop-s_i f_su} and Corollary~\ref{Cor orthogonal basis of Spechts}). Furthermore, we obtain a closed formula on the natural bilinear forms on Specht modules (Theorem~\ref{gamma properties}), which answer a question of Ariki, Mathas and Rui \cite[\S6.9]{AMR}. As an application, we obtain a complete set of primitive idempotents of semi-simple degenerate cyclotomic Hecke algebras (Theorem~\ref{Them idempotents}), and describe `explicitly'  the Brundan-Kleshchev isomorphism (\cite[Corollary 1.3]{BK-Block}) between degenerate cyclotomic Hecke algebras and cyclotomic Hecke algebras in the semi-simple case.

 It is well-known that the Schur elements play a powerful role in the representation theory of symmetric algebras, see for example \cite[Chap.\,9]{Curtis-Reiner} and \cite[Chap.\,7]{Geck}. In the case of the degenerate cyclotomic Hecke algebras, Brundan and Kleshchev's work [\cite{BK-Schur-Weyl}, Theorem A2] showed that these algebras are symmetric algebras for all parameters, which enable us to use the Schur elements to determine when Specht modules are projective irreducible and whether the algebra is semi-simple. In the paper we determine explicitly Schur elements for the degenerate cyclotomic Hecke algebras by computing the trace form on some nice idempotents of these algebras (Theorem~\ref{Them Schur elements}). In the process of our computation, we introduce firstly the $\bar{\,\,}$ operation for the degenerate cyclotomic Hecke algebras (Definition~\ref{Def bar-operation}), which is very different from that one introduced by Mathas \cite[\S3]{Mathas-J-Algebra} for cyclotomic Hecke algebras, and then by a straightforward computation based on two key Lemmas~\ref{Lem key} and \ref{Lem gamma_t_lambda}.

 The lay-out of this paper as follows. In section~\ref{Sec: def of DCA} we recall the definition of the degenerate cyclotomic Hecke algebras and the non-degenerate trace form on these algebras. In the long but fundamental Section~\ref{Sec: Cellular basis}, use the cellular bases we give a thorough investigation of the (semi-simple) degenerate cyclotomic Hecke algebras, for example the ``Young's seminormal form" and a orthogonal bases for Specht modules and the primitive idempotents of these algebras. The dual Specht module, or equivalently, a dual cellular basis of the degenerate cyclotomic Hecke algebras is considered in Section~\ref{sec: dual Spechts}, which is applied in Section~\ref{Sec: nice primitive idempotents} to give some nice idempotents of these algebras. Section~\ref{Sec: comuputaion} gives a direct computation of the trace form on these nice idempotents without any assumption, which gives a different proof of the non-degeneration of the trace form.  Finally, a closed formula for Schur elements for the degenerate cyclotomic Hecke algebras is given in Section~\ref{Sec: Schur elements}. Throughout this paper, we assume that $R$ is a commutative ring and that $m$ and $n$ are positive integers unless otherwise stated.
\section{Degenerate cyclotomic Hecke algebras}\label{Sec: def of DCA}
In this section, we recall that the definitions of degenerate cyclotomic Hecke algebras and of degenerate affine Hecke algebras. The non-degenerate trace form on the degenerate cyclotomic Hecke algebras and some basic facts are reviewed briefly.

\begin{Point}{}* Let $m, n$ be positive integers. Recall from \cite{shephard-toda} or \cite{cohen} that the complex
  reflection group $W_{m,n}$ of type $G(m, 1, n)$ is the finite group generated by elements $s_0, s_1, \dots, s_{n-1}$ subject to the relations
 $$\begin{aligned}&s_0^m=1, \quad
 s_0s_1s_0s_1=s_1s_0s_1s_0\\
 &s_i^2=1, \quad s_is_{i+1}s_i=s_{i+1}s_{i}s_{i+1}, \quad i>1\\
 & s_is_j=s_js_i, \quad |i-j|>1.\end{aligned}$$
 In particular, the subgroup $\la s_1, \dots, s_{n-1}\ra$ of $W_{m,n}$ is isomorphic to the  symmetric group $S_n$ of order
 $n$ with simple transposition $s_i=(i,i+1)$ for $i=1, \dots, n-1$.
It is well-known that $W_{m,n}\cong(\mathbb{Z}/m\bz)^n\rtimes S_n$. Clearly, $W_{1,n}$ is the Weyl group of type $A_n$ and
 $W_{2,n}$ is the Weyl group of type $B_n$.
 \end{Point}

\begin{definition}\label{Def DCHA}Let $R$ be a commutative ring and $Q=\{q_1, \dots, q_m\}\subset R$.
The {\it degenerate cyclotomic Hecke algebra} is the unital
associative $R$-algebra  $\sh\!:=\sh_{m,n}(Q)$ generated by
$s_0,s_1,\dots,s_{n-1}$ and subjected to relations

\begin{enumerate}\item
$(s_0-q_1)\dots(s_0-q_m)=0$,
\item $s_0(s_1s_0s_1+s_1)=(s_1s_0s_1+s_1)s_0$,
\item $s_i^2=1$,                \quad $1\le i<n$,
\item $s_is_{i+1}s_i=s_{i+1}s_is_{i+1}$, \quad  $1\le i<n-1$,
\item $s_is_j=s_js_i$,                   \quad$|i-j|>1$.
\end{enumerate}
The elements $x_1\!:=s_0$
and $x_{i+1}\!:=s_ix_is_i+s_i$ for $i=1, \dots, n-1$ of $\sh$ are called the \textit{Jucys-Murphy elements} of $\sh$.
\end{definition}
Clearly, $\sh_{1,n}(Q)$ is exactly the group algebra $RS_n$ and $x_1, \dots, x_n$ are
algebraically dependent, moreover, for all $i$, the minimal polynomial of $x_i$ can be determined explicitly, see Corollary~\ref{Cor x_k F_k}(ii).  The following facts will be used frequently.

 \begin{lemma}\label{Lem si xj}Suppose that $1\le i<n$ and $1\le j, k\le n$. Then
\begin{enumerate}
\item  $s_jx_j-x_{j+1}s_j=-1$ and $s_{j-1}x_j-x_{j-1}s_{j-1}=1$.
 \item $s_ix_j=x_js_i$ if $i\neq j-1, j$.
 \item $x_jx_k=x_kx_j$ if $1\le j, k\le n$.
 \item $s_j(x_jx_{j+1})=x_jx_{j+1}s_j$ and $s_j(x_j+x_{j+1})=(x_{j}+x_{j+1})s_j$.
 \item if $a\in R$ and $i\neq j$ then $s_i$ commutes with $(x_1-a)(x_2-a)\cdots(x_j-a)$.
 \end{enumerate}
  \end{lemma}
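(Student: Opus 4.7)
The five items will be proved in the order (i), (ii), the ``sum'' half of (iv), (iii), the ``product'' half of (iv), and (v); each step feeds into the next. Part (i) is immediate from the defining recursion $x_{j+1}=s_jx_js_j+s_j$: right-multiplying by $s_j$ and using $s_j^2=1$ gives $x_{j+1}s_j=s_jx_j+1$, which is the first identity, and replacing $j$ by $j-1$ yields the second. Part (ii) is a routine induction on $j$; the base $j=1$ is the commutation $s_is_0=s_0s_i$ for $i>1$, built into Definition~\ref{Def DCHA}. For the inductive step, substitute $x_{j+1}=s_jx_js_j+s_j$ and split into $i\le j-2$, $i=j-1$, $i\ge j+2$; the outer two cases are handled by $|i-j|>1$ commutation together with the induction hypothesis, while the middle case $i=j-1$ invokes the braid relation $s_{j-1}s_js_{j-1}=s_js_{j-1}s_j$ when $j\ge 2$ (or the second defining relation of $\sh$, namely $s_0x_2=x_2s_0$, when $j=1$) together with part (i) to push $s_{j-1}$ through $x_{j+1}$.

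\textbf{Commutation of the Jucys--Murphy elements.} The ``sum'' half of (iv) follows from (i) alone: $s_jx_j=x_{j+1}s_j-1$ and $s_jx_{j+1}=x_js_j+1$ give $[s_j,x_j]+[s_j,x_{j+1}]=0$. For (iii) I would induct on $N=\max(j,k)$; the base $N=2$ is the identity $s_0x_2=x_2s_0$ built into Definition~\ref{Def DCHA}. The inductive step splits into a non-adjacent and an adjacent case. For $j\le N-2$, part (ii) makes $s_{N-1}$ commute with $x_j$, so expanding $x_N=s_{N-1}x_{N-1}s_{N-1}+s_{N-1}$ and applying the inductive hypothesis $x_jx_{N-1}=x_{N-1}x_j$ yields $x_jx_N=x_Nx_j$ in one line. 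The hard case $j=N-1$ is the main obstacle of the whole lemma: the plan is to apply $s_{N-2}$ to $C:=[x_{N-1},x_N]$ and use (i) together with (ii) to reduce $s_{N-2}C$ to $[x_{N-2},x_N]\,s_{N-2}$; the right-hand side vanishes by the non-adjacent case just handled, and $s_{N-2}$ being its own inverse then forces $C=0$.

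\textbf{Finishing up.} Once (iii) is available, the ``product'' half of (iv) is a short calculation: two substitutions from (i) in $s_j(x_jx_{j+1})$ produce $x_{j+1}x_js_j$, which equals $x_jx_{j+1}s_j$ by (iii). Finally, (v) follows by rearranging the factors of $(x_1-a)\cdots(x_j-a)$, using (iii), to place $(x_i-a)(x_{i+1}-a)$ at the end whenever both appear (precisely when $i+1\le j$); every remaining factor commutes with $s_i$ by (ii), and the symmetric pair commutes with $s_i$ by the two identities of (iv). The only genuinely non-routine step in the lemma is the commutator trick for the adjacent Jucys--Murphy elements in (iii), which becomes available precisely because the non-adjacent commutation has been established first.
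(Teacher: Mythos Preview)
Your proof is correct, and the overall skeleton matches the paper's: (i) from the recursion, (ii) by induction on $j$ with the braid relation handling the $i=j-1$ case, (iv) from (i) and (iii), and (v) from (ii) and (iv). Two points of comparison are worth making.

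For (iii) you take a genuinely different route. The paper inducts on the \emph{smaller} index: the base case $x_1x_k=x_kx_1$ follows (by a short induction on $k$) from the fact that $x_1=s_0$ commutes with every $s_i$ for $i\ge2$, and the step from $x_l$ to $x_{l+1}$ is then a one-liner, since for $k\ge l+2$ the element $s_l$ commutes with $x_k$ by (ii). In particular the paper never faces an ``adjacent'' case. Your induction on $N=\max(j,k)$ does face it, and you dispatch it with the commutator identity $s_{N-2}[x_{N-1},x_N]=[x_{N-2},x_N]\,s_{N-2}$, which is correct and rather elegant; the paper's organisation simply sidesteps the need for such a trick. For (v) you argue directly by permuting the factors (via (iii)) to isolate $(x_i-a)(x_{i+1}-a)$, whereas the paper runs an induction on $j$; both are equally short.

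One minor remark: your parenthetical ``when $j=1$'' in the proof of (ii) is superfluous. The lemma hypothesises $1\le i<n$, so the case $i=j-1=0$ never occurs in (ii); the identity $s_0x_2=x_2s_0$ enters only as the base of your induction for (iii).
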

\begin{proof}(i) follows directly by the definition of $x_{j+1}$ and \ref{Def DCHA}(iii).

(ii) If $j=1$ then $s_ix_1=s_is_0=x_1s_i$ whenever $i\neq 1$ according to \ref{Def DCHA}(v). Assume that for all
$j=l\geq1$, $s_ix_l=x_ls_i$ if $i\neq l-1, l$. Suppose that $j=l+1$. Then, by induction and \ref{Def DCHA}(v),
 $s_ix_j=s_i(s_l+s_lx_ls_l)=x_{j}s_i$ if
$i\neq l-1,l,l+1$. On the other hand,
 $$\begin{aligned}s_{l-1}x_{l+1}-x_{l+1}s_{l-1}&=s_{l-1}s_{l}-
s_{l}s_{l-1}+s_{l-1}s_lx_ls_l-s_{l}x_ls_ls_{l-1}\\
&=s_{l-1}s_{l}s_{l-1}x_{l-1}s_{l-1}s_l-s_ls_{l-1}x_{l-1}s_{l-1}s_ls_{l-1}\\
&=s_ls_{l-1}(s_lx_{l-1}-x_{l-1}s_{l})s_{l-1}s_l,
\end{aligned}$$
so, by induction, $s_{l-1}x_{l+1}=x_{l+1}s_{l-1}$ and (ii) is proved.

(iii) Without loss of generality, we may assume that $1\le j\le k\le n$. Then \ref{Def DCHA}(ii) and \ref{Def DCHA}(v) imply that $x_1x_k=x_kx_1$ for $k=1, \dots, n$. Assume
that for all $j=l\geq 1$ and for all $l\le k\le n$, $x_ix_k=x_kx_i$. Suppose that $j=l+1$.
Then, by (ii) and induction,  $x_{j}x_k=(s_{l}+
s_{l}x_{l}s_{l})x_k=x_kx_i$ for all $k\geq i=l+1$. Hence $x_ix_k=x_kx_j$ for all $1\le j,k\le n$.

(iv) The first equality follows by \ref{Def DCHA}(iii) and (iii), and
the second one follows by (ii).

(v) Let $X_j(a)\!:=(x_1-a)\dots(x_j-a)$. Obviously $s_iX_1(a)=X_1(a)s_i$ if $i\neq 1$.
 Assume that for all $j=l\geq 1$, $s_iX_j(a)=X_j(a)s_i$ if $i\neq j$. Then, by induction and (ii),
 $$s_iX_{l+1}(a)=s_iX_{l}(a)(x_{l+1}-a)=X_{l+1}(a)s_i$$
  if $i\neq l, l+1$. On the other hand, by induction and (iv),
$$s_lX_{l+1}(a)=X_{l-1}(a)s_l(x_l-a)(x_{l+1}-a)=X_{l-1}(a)(s_lx_lx_{l+1}-as_{l}(x_l+x_{l+1})
-a^2s_l)=X_{l+1}(a)s_l.$$ Thus $s_iX_{l+1}(a)=X_{l+1}(a)s_i$ for $i\neq l+1$. The proof is completed.
\end{proof}

  Recall from \cite{Drinfeld} or \cite{Lusztig89} that the \textit{degenerate affine Hecke algebra}
   $\widehat{\sh}^{\text{aff}}_n$ of $\mathrm{GL}_n(\bc)$ is the associated $R$-algebra which is
    equal as an $R$-module to the tensor product $R[y_1, \dots, y_n]\otimes_RRS_n$ of the polynomial
     algebra $R[y_1, \dots, y_n]$ and the group algebra $RS_n$. Multiplication is defined so that
     $R[y_1, \dots, y_n]$ and $RS_n$ are subalgebras, and in addition
   $$\begin{aligned}&s_iy_{i+1}-y_is_i=1\\&s_iy_j=y_js_i \If i\neq j-1, j.\end{aligned}$$

The following lemma shows that the degenerate cyclotomic Hecke algebras are the cyclotomic quotients of the degenerate affine Hecke algebra, which play a fundamental role in the study of the degenerate affine Hecke algebra, see for example \cite[Chap.~7]{Kbook}.
\begin{lemma} Let $J_Q$ denote the two-sided ideal of $\widehat{\sh}^{\text{aff}}_n$ generated
 by $(y_1-q_1)\cdots(y_1-q_m)$. Then there is a surjective homomorphism of algebras
 $\pi: \widehat{\sh}^{\text{aff}}_n\rr\sh$ such that $y_i\mapsto x_i$ for each $i$ and
  $s_j\mapsto s_j$ for each $j$. Then $\pi$ is a surjective homomorphism and
  $\ker\pi=J_{Q}$.
\end{lemma}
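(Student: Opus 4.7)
The plan is in three steps: construct the homomorphism $\pi$, verify surjectivity, and prove $\ker\pi=J_Q$ by exhibiting an inverse modulo $J_Q$.

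To construct $\pi$, I would use the presentation of $\widehat{\sh}^{\text{aff}}_n$ by generators $s_1,\dots,s_{n-1},y_1,\dots,y_n$ subject to the symmetric-group braid relations, the commutativity $y_jy_k=y_ky_j$, and the cross relations $s_iy_{i+1}-y_is_i=1$ together with $s_iy_j=y_js_i$ for $i\neq j-1,j$. Mapping these generators to $s_1,\dots,s_{n-1}, x_1,\dots,x_n\in\sh$, the braid relations hold by Definition~\ref{Def DCHA}(iii)--(v), while the three relation families involving the $y$'s are precisely the statements of Lemma~\ref{Lem si xj}(i)--(iii); hence the universal property of $\widehat{\sh}^{\text{aff}}_n$ produces $\pi$. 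Surjectivity is then immediate: the generators $s_0,s_1,\dots,s_{n-1}$ of $\sh$ all lie in the image, since $\pi(y_1)=x_1=s_0$ and $\pi(s_j)=s_j$.

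For the kernel, the inclusion $J_Q\subseteq\ker\pi$ is clear from Definition~\ref{Def DCHA}(i). For the reverse inclusion I would build a two-sided inverse of the induced map $\overline{\pi}\colon\widehat{\sh}^{\text{aff}}_n/J_Q\to\sh$, namely the homomorphism $\phi\colon\sh\to\widehat{\sh}^{\text{aff}}_n/J_Q$ defined on generators by $s_0\mapsto\overline{y_1}$ and $s_j\mapsto\overline{s_j}$ for $1\le j\le n-1$. Well-definedness requires checking the defining relations of $\sh$: relation (i) is built into $J_Q$; relations (iii)--(v) already hold among $s_1,\dots,s_{n-1}$ in $\widehat{\sh}^{\text{aff}}_n$. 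The only relation meriting attention is (ii). From the cross relation $s_1y_2-y_1s_1=1$, multiplying on the left by $s_1$ gives $y_2=s_1y_1s_1+s_1$; therefore $\phi(s_1s_0s_1+s_1)=\overline{y_2}$, and relation (ii) becomes $\overline{y_1}\,\overline{y_2}=\overline{y_2}\,\overline{y_1}$, which holds in the polynomial subalgebra. Since $\overline{\pi}\circ\phi$ and $\phi\circ\overline{\pi}$ agree with the identity on generators of both algebras, they are mutually inverse, and $\ker\pi=J_Q$.

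I do not anticipate a serious obstacle. The only mildly subtle step is the identification $y_2=s_1y_1s_1+s_1$ in $\widehat{\sh}^{\text{aff}}_n$, which converts the awkward-looking relation \ref{Def DCHA}(ii) into the transparent commutation of two polynomial generators after passing to the quotient.
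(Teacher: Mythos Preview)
Your proposal is correct and essentially fleshes out what the paper leaves implicit: the paper's own proof is the single line ``The Lemma follows by definitions and Lemma~\ref{Lem si xj}.'' Your construction of $\pi$ via Lemma~\ref{Lem si xj}(i)--(iii) is exactly this, and your explicit inverse $\phi$ is a clean way to pin down $\ker\pi=J_Q$, which the paper does not spell out.

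One small omission: you write that relations (iii)--(v) ``already hold among $s_1,\dots,s_{n-1}$'', but relation~(v) in Definition~\ref{Def DCHA} also includes the cases $s_0s_j=s_js_0$ for $j\ge 2$, so for $\phi$ you must check $\overline{y_1}\,\overline{s_j}=\overline{s_j}\,\overline{y_1}$ for $j\ge 2$. This is immediate from the defining cross relation $s_iy_j=y_js_i$ for $i\neq j-1,j$ (take $j=1$, $i\ge 2$), so the fix is one line.
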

\begin{proof}The Lemma follows by definitions and Lemma~\ref{Lem si xj}. \end{proof}

For  degenerate cyclotomic Hecke algebra $\sh$, we have the following important theorem.

\begin{theorem}[\cite{Kbook}, Theorem 7.5.6]The degenerate cyclotomic Hecke algebra $\sh$ is a free $R$-module with basis
$\{x_1^{i_1}x_2^{i_2}\cdots x_n^{i_n}w\mid0\le i_1, \dots, i_n<m, w\in S_n\}$.\end{theorem}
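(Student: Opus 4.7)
The plan is to prove the theorem in two stages: first establish that the proposed set $\mathcal{B}=\{x_1^{i_1}\cdots x_n^{i_n}w : 0\le i_j<m,\ w\in S_n\}$ spans $\sh$ as an $R$-module, and then establish $R$-linear independence. For spanning I would observe that $\sh$ is generated as an algebra by $x_1$ together with $s_1,\ldots,s_{n-1}$, since $s_0=x_1$ and every higher $x_j$ is recursively defined by $x_{j+1}=s_jx_js_j+s_j$. Starting with an arbitrary product of the generators, I would apply Lemma~\ref{Lem si xj}(i)--(ii) repeatedly to push each Jucys--Murphy factor to the left past every $s_i$ that stands to its left: either the two commute, or the identities $s_jx_j=x_{j+1}s_j-1$ and $s_{j-1}x_j=x_{j-1}s_{j-1}+1$ reindex the $x$-factor and contribute an error term of strictly smaller combinatorial complexity. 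Induction on the number of such $(s,x)$-inversions shows the process terminates, yielding an $R$-combination of elements $x^{\underline{i}}w$ with $\underline{i}\in\bz_{\ge0}^n$ and $w\in S_n$.

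Once every element is in that form, the cyclotomic relation $(x_1-q_1)\cdots(x_1-q_m)=0$ forces $i_1<m$. For the remaining bounds, the idea is that conjugating this relation by $s_1$ and applying Lemma~\ref{Lem si xj}(iii)--(v) produces, modulo monomials of strictly lower total $x$-degree, a monic polynomial of degree $m$ satisfied by $x_2$; inductively, conjugation by $s_{j-1}s_{j-2}\cdots s_1$ yields the analogous monic relation for $x_j$. A double induction, outer on $j$ and inner on $(i_1,\ldots,i_n)$ in lexicographic order, then reduces every $i_j$ below $m$.

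For linear independence I would lift the problem to the degenerate affine Hecke algebra $\widehat{\sh}^{\aff}_n$, which has the $R$-basis $\{y_1^{i_1}\cdots y_n^{i_n}w : \underline{i}\in\bz_{\ge0}^n,\,w\in S_n\}$ built into its definition as $R[y_1,\ldots,y_n]\otimes_RRS_n$. Using the surjection $\pi$ of the preceding lemma with $\ker\pi=J_Q$, the spanning argument above (transported to $\widehat{\sh}^{\aff}_n$) yields $\widehat{\sh}^{\aff}_n=\mathrm{span}_R(\mathcal{B})+J_Q$; the remaining task is to prove $\mathrm{span}_R(\mathcal{B})\cap J_Q=0$ in $\widehat{\sh}^{\aff}_n$. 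The cleanest way to accomplish this is to produce a faithful $\sh$-module of rank exactly $m^nn!$---for instance the induced module $\sh\otimes_{R[x_1]}\bigl(R[x_1]/(f(x_1))\bigr)$, or a suitable truncated polynomial representation---on which the distinct elements of $\mathcal{B}$, acting on a cyclic vector, produce linearly independent images.

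The main obstacle is this last step: constructing the concrete faithful representation and verifying the defining relations of $\sh$ by direct computation. Manipulation purely inside $\sh$ cannot close the gap, because \emph{a priori} $\sh$ could have $R$-rank strictly less than $m^nn!$; one genuinely needs an external model of the correct size. Once such a model is in hand, an $R$-linear dimension count on the cyclic vector, combined with the spanning already established, completes the argument.
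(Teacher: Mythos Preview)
The paper does not supply its own proof of this theorem: it is quoted verbatim from Kleshchev's book \cite{Kbook} (Theorem~7.5.6) and used as a black box. So there is no in-paper argument to compare against.

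That said, your outline is the standard one and is essentially what Kleshchev does. The spanning argument via Lemma~\ref{Lem si xj} is correct; the reduction of each exponent below $m$ is the usual filtration trick (and is exactly what underlies Lemma~\ref{Lem graded iso} later in the paper). You are also right that linear independence cannot be established internally and requires an explicit $\sh$-module of rank $m^nn!$; in \cite{Kbook} this is done by an inductive construction (inducing from $\sh_{m,n-1}(Q)$ and checking the relations on a concrete basis), which is precisely the ``external model'' you describe. The only thing missing from your proposal is the actual construction and verification, which you correctly flag as the real work.
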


Now we recall the non-degenerate trace form on $\sh$ constructed by Brundan and Kleshchev \cite{BK-Schur-Weyl},
which is our main investigation in this paper. To describe this one and for the computation in Section~\ref{Sec: comuputaion}, we introduce some notation.
Let $R_m[x_1, \dots, x_n]$ be the level $m$ truncated polynomial algebra, that is,
 the quotient of the polynomial algebra $R[y_1, \dots, y_n]$ by the two-sided ideal
  generated by $y_1^m, \dots, y_n^m$.
Also define a grading on the twist tensor algebra $R_m[y_1, \dots, y_n]
\rtimes\!\!\!\!\!\!\!\bigcirc RS_n$ by declaring that each $y_i$ is
of degree 1  and each $w\in S_n$ is of degree 0.

\begin{lemma}[\cite{BK-Schur-Weyl}, Lemma A1]Let $\hat{\tau}:R_m[y_1, \dots, y_n]
\rtimes\!\!\!\!\!\!\!\bigcirc RS_n\rr R$ be the $R$-linear map defined by
$$\hat{\tau}(y_1^{i_1}\cdots y_n^{i_n} w)
        :=\left\{\begin{array}{ll} 1,&\text{ if }i_1=\cdots=i_n=m-1 \text{ and } w=1,\\
                0,&\text{ otherwise}.
                \end{array}\right.$$
Then $\hat{\tau}$ is a non-degenerate trace form on $R_m[y_1, \dots, y_n]\!
\rtimes\!\!\!\!\!\!\!\bigcirc RS_n$.
\end{lemma}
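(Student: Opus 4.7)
My plan is to verify the two required properties of $\hat{\tau}$ by direct computation on the natural $R$-basis of the smash product. By the definition of the truncated polynomial algebra and the standard PBW-type basis for the smash product, the monomials $\{y^I w : I = (i_1,\dots,i_n),\ 0 \le i_k < m,\ w \in S_n\}$, where $y^I := y_1^{i_1}\cdots y_n^{i_n}$, form an $R$-basis. Since $\hat{\tau}$ is defined $R$-linearly and both the trace and non-degeneracy properties are $R$-bilinear conditions, it suffices to work with this basis.

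The first step is to rewrite the product $y^I w \cdot y^J v$ in basis form. Using the commutation $w y_k = y_{w(k)} w$ repeatedly (this is the defining relation of the smash product, and the exponent of $y_i$ in the result stays within $[0,m-1]$ or the term vanishes in the truncated quotient), one obtains
\[
y^I w \cdot y^J v \;=\; y^{I + w(J)}\, wv,
\]
where $w(J)$ denotes the tuple with $(w(J))_i = j_{w^{-1}(i)}$. Applying $\hat{\tau}$, this is nonzero exactly when $wv = 1$ (i.e.\ $v = w^{-1}$) and $i_i + j_{w^{-1}(i)} = m-1$ for every $i$, in which case the value is $1$. The analogous computation for $y^J v \cdot y^I w$ yields the condition $vw = 1$ and $j_i + i_{v^{-1}(i)} = m-1$. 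After the change of index $i \mapsto w(k)$, these two systems are identical, so $\hat{\tau}(y^I w \cdot y^J v) = \hat{\tau}(y^J v \cdot y^I w)$, proving the trace property.

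For non-degeneracy, given a nonzero element $a = \sum_{I,w} c_{I,w}\, y^I w$, I would fix a particular basis index $(I_0, w_0)$ with $c_{I_0,w_0}\ne 0$ and exhibit an explicit $b$ detecting it. The natural candidate is $b := y^J w_0^{-1}$, where $J$ is determined by the rule $j_k := (m-1) - (I_0)_{w_0(k)}$; note $0 \le j_k \le m-1$ since $(I_0)_{w_0(k)} \in \{0,\dots,m-1\}$, so $b$ lies in the algebra. Plugging into the formula above, the sum $\hat{\tau}(ab) = \sum_{I,w} c_{I,w}\,\hat{\tau}(y^{I+w(J)} wv)$ collapses: the condition $wv = 1$ forces $w = w_0$, and then the condition $i_i + j_{w_0^{-1}(i)} = m-1$ together with our choice of $J$ forces $I = I_0$. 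Thus $\hat{\tau}(ab) = c_{I_0,w_0} \ne 0$.

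The main (mild) obstacle is bookkeeping: one must check that the relabelling $j_{w^{-1}(i)} = m-1 - i_i$ is genuinely equivalent to $j_i + i_{v^{-1}(i)} = m-1$ after transposing indices via $w$ and $v = w^{-1}$, and that the uniqueness of the surviving basis index in the non-degeneracy argument really holds (each pair $(I,w)$ that could contribute is pinned down by the two equalities). Beyond that, no further ideas are required; the lemma is essentially a structural observation about the Frobenius form on the truncated polynomial algebra twisted by the permutation action, and the proof reduces entirely to the combinatorics of permutations acting on exponent tuples.
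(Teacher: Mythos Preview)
Your argument is correct: the product formula $y^Iw\cdot y^Jv=y^{I+w(J)}wv$ in the truncated smash product, together with the reindexing $k\mapsto w(k)$, verifies the trace property, and the explicit dual element $y^Jw_0^{-1}$ with $j_k=m-1-(I_0)_{w_0(k)}$ establishes non-degeneracy. (There are a couple of typos where you wrote $i_i$ for $i_k$, but the meaning is clear.)

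Note, however, that the paper does not actually prove this lemma; it is quoted verbatim from \cite{BK-Schur-Weyl}, Lemma~A1, and used as a black box in the construction of the trace form $\tau$ on $\sh$. So there is no paper-internal proof to compare against. Your direct computation is the standard verification and is exactly what one would expect the original source to contain.
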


Now let $l=(m-1)n$ and define a filtration $$RS_n=\cf_0\sh\subseteq\cf_1\sh\subseteq
\cdots\subseteq \cf_l\sh=\sh$$
of $\sh$ by defining that $$\cf_r\sh:=\text{Span}_{R}\{x_1^{i_1}x_2^{i_2}\cdots x_n^{i_n}w|
i_1+\cdots+i_n\le r, w\in S_n\}.$$
For any $0\le i\le l$, let $$\gr_i: \sh\rr \cf_i\sh/\cf_{i-1}\sh$$
be the map sending an element of $\sh$ to its degree $i$ graded component.

\begin{lemma}[\cite{BK-Schur-Weyl}, Lemma 3.5]\label{Lem graded iso} There is a well defined isomorphism of
 graded algebras $$\psi_n:R_m[y_1, \dots, y_n]\rtimes\!\!\!\!\!\!\!\bigcirc RS_n\rr \gr \sh$$
such that $y_i\mapsto \gr_1x_i$ for each $i$ and $s_j\mapsto \gr_0s_j$ for each $j$.\end{lemma}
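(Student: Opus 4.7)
The plan is to construct $\psi_n$ via the universal property of the smash product $R_m[y_1,\dots,y_n]\rtimes\!\!\!\!\!\!\!\bigcirc RS_n$: I will verify that the proposed images $\gr_1 x_i$ and $\gr_0 s_j$ in $\gr\sh$ satisfy the defining relations of the smash product, and then deduce bijectivity by matching the PBW basis of $\sh$ against the standard basis of the smash product.

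Verifying the relations is mostly a translation of Lemma~\ref{Lem si xj} to the graded setting. The elements $\gr_0 s_j$ inherit the symmetric group relations from $RS_n \subseteq \cf_0\sh$, and commutativity $\gr_1 x_i \cdot \gr_1 x_j = \gr_1 x_j \cdot \gr_1 x_i$ is immediate from Lemma~\ref{Lem si xj}(iii). For the twisted commutation between the $y_i$'s and $S_n$, Lemma~\ref{Lem si xj}(i) reads $s_j x_j = x_{j+1} s_j - 1$ and $s_{j-1} x_j = x_{j-1} s_{j-1} + 1$; the $\pm 1$ terms lie in $\cf_0\sh$ and hence vanish in the grade-one component, giving
\[
\gr_0(s_j)\cdot \gr_1(x_j) \;=\; \gr_1(x_{j+1})\cdot\gr_0(s_j), \qquad \gr_0(s_{j-1})\cdot\gr_1(x_j) \;=\; \gr_1(x_{j-1})\cdot\gr_0(s_{j-1}),
\]
which is exactly the crossed relation in the smash product. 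The remaining cases $s_i y_j = y_j s_i$ with $i \ne j-1, j$ come directly from Lemma~\ref{Lem si xj}(ii).

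The most delicate relation is $(\gr_1 x_i)^m = 0$. For $i=1$, relation (i) of Definition~\ref{Def DCHA} expands to $x_1^m = -\sum_{k<m} c_k x_1^k \in \cf_{m-1}\sh$ for scalars $c_k \in R$, so $(\gr_1 x_1)^m = \gr_m(x_1^m) = 0$. For $i > 1$, the recursion $x_{i+1} = s_i x_i s_i + s_i$ gives $\gr_1 x_{i+1} = \gr_0(s_i)\cdot\gr_1 x_i\cdot\gr_0(s_i)$ in the graded algebra, since the extra $s_i$ has grade zero; because $\gr_0(s_i)^2 = 1$, induction on $i$ then yields $(\gr_1 x_i)^m = 0$ for all $i$.

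These verifications show that $\psi_n$ is a well-defined homomorphism of graded algebras, and surjectivity is clear since $\gr\sh$ is generated in degrees zero and one by the $\gr_0 s_j$ and the $\gr_1 x_i$. For injectivity, Theorem~7.5.6 supplies the free $R$-basis $\{x_1^{i_1}\cdots x_n^{i_n} w : 0 \le i_k < m,\ w \in S_n\}$ of $\sh$; grouping basis elements by total $x$-degree shows that the induced collection $\{(\gr_1 x_1)^{i_1}\cdots(\gr_1 x_n)^{i_n}\gr_0(w)\}$ is an $R$-basis of $\gr\sh$, and this is precisely the image under $\psi_n$ of the standard $R$-basis of $R_m[y_1,\dots,y_n]\rtimes\!\!\!\!\!\!\!\bigcirc RS_n$. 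I expect the main obstacle to be the clean reduction of the cyclotomic relation to $y_1^m = 0$ together with its propagation along the recursion for $x_{i+1}$; everything else is a transparent re-reading of Lemma~\ref{Lem si xj}.
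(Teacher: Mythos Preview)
The paper does not give its own proof of this lemma; it is quoted directly from \cite[Lemma~3.5]{BK-Schur-Weyl} and used as a black box. So there is no in-paper argument to compare against.

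Your proposed argument is correct and is essentially the natural one. The relation checks are all sound: the symmetric-group relations live already in $\cf_0\sh$; commutativity of the $\gr_1 x_i$ and the crossed relations come from Lemma~\ref{Lem si xj}(ii)--(iii) and (i) after dropping the degree-zero error terms $\pm 1$; and the nilpotency $(\gr_1 x_i)^m=0$ is obtained for $i=1$ from the cyclotomic relation and then propagated by conjugation since $\gr_1 x_{i+1}=\gr_0(s_i)\,\gr_1 x_i\,\gr_0(s_i)$. The bijectivity argument via the PBW basis is also fine: because the filtration is \emph{defined} as the span of the monomials $x_1^{i_1}\cdots x_n^{i_n}w$ with $i_1+\cdots+i_n\le r$, and these monomials are an $R$-basis of $\sh$, each $\cf_r\sh/\cf_{r-1}\sh$ is free on the monomials of total degree exactly $r$, matching the standard basis of the smash product. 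One minor remark: your surjectivity claim relies on $\gr\sh$ being generated in degrees $0$ and $1$, which is true but is most cleanly seen from exactly the same basis count you use for injectivity, so you could simply omit the separate surjectivity sentence.
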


The following surprising theorem says that $\sh$ is a symmetric algebra for all parameters $q_1, \dots, q_m$ in $R$,
Theorem~\ref{Them tau(Zw)} gives a different proof on the non-degeneration of the trace form $\tau$ on $\sh$.
\begin{theorem}[\cite{BK-Schur-Weyl}, Theorem A2]Let $\tau:\sh\rr R$ be the $R$-linear map determined by
$$\tau(x_1^{i_1}\cdots x_n^{i_n} w)
        :=\left\{\begin{array}{ll} 1,&\text{ if }i_1=\cdots=i_n=m-1 \text{ and } w=1,\\
                0,&\text{ otherwise}.
                \end{array}\right.$$
Then $\tau=\hat{\tau}\circ \gr_l$, which is a non-degenerate trace form on $\sh$.
\label{Thm-def tau}\end{theorem}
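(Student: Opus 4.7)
The plan is to derive everything from Lemma~\ref{Lem graded iso}, which transports the non-degenerate trace form $\hat\tau$ on $R_m[y_1,\dots,y_n]\rtimes RS_n$ back to $\sh$ through the associated graded. So the strategy is to verify the equality $\tau=\hat\tau\circ\gr_l$ on the PBW basis, then leverage the properties of $\hat\tau$ to deduce the trace and non-degeneracy properties for $\tau$ by filtration bookkeeping.

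First I would compare $\tau$ and $\hat\tau\circ\gr_l$ on a basis element $x_1^{i_1}\cdots x_n^{i_n}w$ with $0\le i_j<m$ and $w\in S_n$. It lies in $\cf_{|i|}\sh$ with $|i|:=i_1+\cdots+i_n$, so $\gr_l$ kills it unless $|i|=l=(m-1)n$, which together with $i_j<m$ forces $i_1=\cdots=i_n=m-1$. In the non-vanishing case its image in $\cf_l\sh/\cf_{l-1}\sh$ corresponds under $\psi_n^{-1}$ to $y_1^{m-1}\cdots y_n^{m-1}w$, and $\hat\tau$ returns $1$ iff $w=1$. This matches the defining formula of $\tau$.

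For the trace property, I would note that Lemma~\ref{Lem si xj} implies the filtration is multiplicative: commuting $s_j$ past $x_i$ produces an $x_k s_j$ term (same filtered degree) plus a constant (strictly lower), so $\cf_p\sh\cdot\cf_q\sh\subseteq\cf_{p+q}\sh$ and the induced product on $\gr\sh$ matches the product in $R_m[y_1,\dots,y_n]\rtimes RS_n$ via $\psi_n$. Consequently, for $a,b\in\sh$,
$$
\gr_l(ab)=\sum_{p+q=l}\gr_p(a)\,\gr_q(b)\quad\text{in }\gr\sh.
$$
Applying $\hat\tau$ summand by summand and invoking the trace property of $\hat\tau$ yields $\hat\tau(\gr_l(ab))=\hat\tau(\gr_l(ba))$, so $\tau(ab)=\tau(ba)$.

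Third, for non-degeneracy, given $0\neq a\in\sh$ let $d:=\max\{i:\gr_i(a)\neq 0\}$, so $\gr_d(a)$ is a nonzero homogeneous element of degree $d$ in $\gr\sh$. Non-degeneracy of $\hat\tau$ supplies a homogeneous $\bar b$ of degree $l-d$ with $\hat\tau(\gr_d(a)\,\bar b)\neq 0$ (only the degree-$l$ component of a product is detected by $\hat\tau$). Lifting $\bar b$ to any $b\in\cf_{l-d}\sh$ with $\gr_{l-d}(b)=\bar b$, the only surviving summand in the displayed formula for $\gr_l(ab)$ is $\gr_d(a)\bar b$ (since $\gr_p(a)=0$ for $p>d$ and $\gr_q(b)=0$ for $q>l-d$), giving $\tau(ab)=\hat\tau(\gr_d(a)\bar b)\neq 0$. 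The one technical point throughout is the multiplicativity of the filtration; this is essentially built into Lemma~\ref{Lem graded iso} and requires no fresh computation, so the proof is a clean bookkeeping exercise that lifts $\hat\tau$ from the graded algebra to $\sh$.
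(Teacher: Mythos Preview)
Your verification that $\tau=\hat\tau\circ\gr_l$ on the PBW basis is fine, and your non-degeneracy argument reaches the right conclusion (though see below). The genuine gap is in the trace step: the displayed identity
\[
\gr_l(ab)=\sum_{p+q=l}\gr_p(a)\,\gr_q(b)
\]
is \emph{false}. Multiplicativity of the filtration only says $\cf_p\cdot\cf_q\subseteq\cf_{p+q}$; it does \emph{not} make the $R$-linear PBW splitting $\sh\cong\gr\sh$ into an algebra map. Concretely, take $m=2$, $n=2$, $a=x_1x_2$, $b=x_1$. Then $ab=x_1^2x_2=(q_1+q_2)x_1x_2-q_1q_2x_2$, so $\gr_2(ab)=(q_1+q_2)y_1y_2$, whereas every summand on your right-hand side vanishes. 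The difficulty is that products $a_pb_q$ with $p+q>l$ can, after reducing powers $x_k^m$ via the defining relations, spill back into degree $l$; your formula ignores this contribution, and with it your deduction of $\tau(ab)=\tau(ba)$ collapses. The Brundan--Kleshchev argument (which the paper cites rather than reproduces) handles symmetry by a different route, effectively reducing to commutators with the generators $s_i$ and $x_1$ and checking those directly; the overflow terms then have degree strictly below $l$ and are killed by $\gr_l$.

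For non-degeneracy your conclusion is correct but the justification should not invoke the faulty formula. The clean statement is: since $a\in\cf_d$ and $b\in\cf_{l-d}$, multiplicativity gives $ab\in\cf_l$ with $ab+\cf_{l-1}=\gr_d(a)\gr_{l-d}(b)$ directly, so $\tau(ab)=\hat\tau(\gr_d(a)\bar b)\neq0$; no sum over $p+q=l$ is needed and no overflow can occur. Note also that the present paper does not itself prove Theorem~\ref{Thm-def tau}; it quotes it from \cite{BK-Schur-Weyl} and later supplies an independent, entirely different proof of non-degeneracy via Theorem~\ref{Them tau(Zw)}, by explicitly computing $\tau(z_\lambda w_{\bar\lambda})=(-1)^{n(\lambda)}$.
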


\begin{Point}{}*
 Note that $\tau$ is essentially independent of the choice of basis of $\sh$. We will need the following easy verified facts.
\noindent\begin{enumerate}
\item Suppose that $h_1,h_2\in\sh$. Then $\tau(h_1h_2)=(h_2h_1)$.
\item Suppose that $w,v\in S_n$ and that $0\le i_1, \dots, i_n<m$. Then
$$\tau(x_1^{i_1}x_2^{i_2}\ldots x_n^{i_n}wv)
      =\begin{cases} 1,&\text{if } i_1=\dots=i_n=m-1\text{ and } w=v^{-1},\\
                        0,&\text{otherwise}.
\end{cases}$$
\end{enumerate}\label{tau properties}\end{Point}

\begin{assume}\label{ppoly}In this paper we will mainly be concerned with the semi-simple degenerate cyclotomic Hecke
algebras; these were classified by Ariki-Mathas-Rui~\cite[Theorem 6.11]{AMR} who showed
that when $R$ is a field $\sh$ is semisimple if and only if
$$P_\sh(Q)=n!
  \prod_{1\le i<j\le m}\prod_{|d|<n}(d+q_i-q_j)\neq 0.$$
For most of what we do it will be
enough to assume that $R$ is a ring in which $P_\sh(Q)$ is invertible.\end{assume}

 \section{A orthogonal basis of the degenerate cyclotomic Hecke algebra}\label{Sec: Cellular basis}
In this section, we review the cellular basis and Specht modules of the degenerate cyclotomic Hecke algebra $\sh$. Using the cellular basis,  we first give the ``Young's seminorma form" for Specht modules (Theorem~\ref{Prop-s_i f_su}), which enables us to give a orthogonal bases of Specht modules (Corollary~\ref{Cor orthogonal basis of Spechts}) and a orthogonal bases and primitive idempotents of $\sh$ (Theorems~\ref{Them orthogonal basis} and \ref{Them idempotents}). Furthermore,  we obtain a closed formula for the natural bilinear form on Specht modules, which answer a question of Ariki, Mathas and Rui \cite[\S6.9]{AMR}, and give a differential proof of the Brundan-Kleshchev isomorphism between the cyclotomic Hecke algebras and the degenerate ones in \cite[Corollary 1.3]{BK-Block} when these algebras are semi-simple.

\begin{definition}[Graham and Lehrer~\cite{GL}]\label{Def-Cellular-algebras}
    Let $A$ an $R$-algebra.
    Fix a partially ordered set $\Lambda=(\Lambda,\unrhd)$ and for each
    $\lambda\in\Lambda$ let $T(\lambda)$ be a finite set. Finally,
    fix $C^\lambda_{\fs\ft}\in A$ for all
    $\lambda\in\Lambda$ and $\fs,\ft\in T(\lambda)$.
    Then the triple $(\Lambda,T,C)$ is a \textit{cell datum} for $A$ if:

    \begin{enumerate}
    \item $\{C^\lambda_{\fs\ft}|\lambda\in\Lambda\text{ and }\fs,\ft\in
        T(\lambda)\}$ is an $R$-basis for $A$;
    \item the $R$-linear map $*: A\rr A$ determined by
        $(C^\lambda_{\fs\ft})^*=C^\lambda_{\ft\fs}$, for all
        $\lambda\in\Lambda$, $\fs,\ft\in T(\lambda)$ is an
        anti-automorphism of $A$;
    \item for all $\lambda\in\Lambda$, $\fs\in T(\lambda)$ and $a\in A$
        there exist scalars $r_{\fs\fu}(a)\in R$ such that
        \vspace{-1.6\jot}$$aC^\lambda_{\fs\ft}
            =\sum_{\fu\in T(\lambda)}r_{\fs\fu}(a)C^\lambda_{\fu\ft}
                     \pmod{A^{\rhd\lambda}},$$\vspace{-1.6\jot}
            where
    $A^{\rhd\lambda}=\text{Span}_{R}%
      \{C^\mu_{\fa\fb}|\mu\rhd\lambda\text{ and }\fa,\fb\in T(\mu)\}$.
    \end{enumerate}
    \noindent An algebra $A$ is a \textit{cellular algebra} if it has
    a cell datum and in this case we call
    $\{C^\lambda_{\fs\ft}|\fs,\ft\in T(\lambda), \lambda\in\Lambda\}$
    a \textit{cellular basis} of $A$.

\end{definition}

\begin{Point}{}* Recall that an {\it $m$-multipartition} of $n$ is a ordered $m$-tuple
$\lambda=(\gl^1; \dots; \gl^m)$ of partitions $\lambda^{i}$ such that
$n=\sum_{i=1}^m|\lambda^{i}|$. Denote by $\mpn$ the set of all $m$-multipartitions of $n$,
 which is a poset under dominance $\unrhd$, where $\gl\unrhd\mu$, if
$$
\displaystyle\sum_{k=1}^{i-1}|\gl^{k}|+\sum_{l=1}^j\gl_l^{i}\geq
\sum_{k=1}^{i-1}|\mu^{k}|+\sum_{l=1}^j\mu_l^{i},
$$
for all $1\leq i\leq m$ and $j\geq 1$. We write $\gl\rhd\mu$ if $\gl\unrhd\mu$ and $\gl\neq \mu$.

Suppose that $\gl$ is an $m$-multipartition of $n$ and let $\mathbf{m}=\{1, \dots, m\}$.
The {\it diagram} of $\lambda$ is the set
$$ [\gl]:=\{(i,j,c)\in\bz_{>0}\times\bz_{>0}\times \mathbf{m}|1\le j\le\lambda^c_i\}.$$
The elements of $[\gl]$ are the nodes of $\gl$; more generally, a node is any element of
$\bz_{>0}\times \bz_{>0}\times \mathbf{m}$. We  may and will identify $[\lambda]$ with
 the $m$-tuple of diagrams of the partitions $\lambda^{c}$, for $1\le c\le m$. Recall that a node
$y\notin[\lambda]$ is an {\it addable node} for $\lambda$ if
$[\lambda]\cup\{y\}$ is the diagram of an $m$-multipartition, and denote by $\sa(\gl)$ the set of
all addable nodes for $\gl$; similarly,
$y\in[\lambda]$ is a {\it removable node} for $\lambda$ if
$[\lambda]\setminus\{y\}$ is the diagram of an $m$-multipartition,  and denote by
$\mathscr{R}(\gl)$ the set of all removable nodes for $\gl$.

 A {\it $\lambda$-tableau} is a bijection $\ft: [\lambda]\rightarrow\{1,2,\dots,n\}$, and if
$\ft$ is a $\lambda$-tableau write $\text{Shape}(\ft)=\lambda$. As with diagrams,
we may and will think of a tableau $\ft$ as an $m$-tuple of tableaux
$\ft=(\ft^1; \dots; \ft^m)$, where $\ft^{c}$ is a $\lambda^{c}$-tableau, for $1\le c\le m$.  The
tableaux $\ft^{c}$ are called the {\it components} of $\ft$.
A tableau is {\it standard} if in each component the entries increase along the rows and
down the columns; let $\std(\lambda)$ be the set of standard
$\lambda$-tableaux.

Given an $m$-multipartition $\lambda$ let $\ft^{\gl}$ be the
$\lambda$-tableau with the numbers $1,2,\dots,n$ entered in
order first along the rows of $\ft^{\lambda^{1}}$ and then the rows
of $\ft^{\lambda^{2}}$ and so on.
The symmetric group $S_n$ acts from the right on the set of
$\lambda$-tableaux; let
$S_\lambda
     =S_{\lambda^{1}}\times\dots\times S_{\lambda^{m}}$
be the row stabilizer of $\ft^{\gl}$. For any $\lambda$-tableau $\ft$
let $d(\ft)$ be the unique element of $S_n$ such that
$\ft=\ft^{\gl} d(\ft)$ and denote by $\ell(\ft)$ the length of $d(\ft)$. \end{Point}

\begin{example} Let $\gl\!\!=\!\!(3\cdot2;2\cdot1;1)$ be a $3$-multipartition of $9$. Then $S_{\gl}\!\!=\!\!S_{\{1,2,3\}}\!\!\times\!\! S_{\{4, 5\}}\!\!\times\!\! S_{\{6, 7\}}\!\!\times\!\! S_{\{8\}}\!\!\times\!\! S_{\{9\}}$,
$$\begin{array}{ccc}\protect{
[\gl]\!\!=\!\!\left(\begin{array}{ccc}\diagram{&&\cr &\cr };\, &\diagram{&\cr \cr};\, &\diagram{\cr}\end{array}\right)}
\,&\protect{
\ft^{\gl}\!\!=\!\!\left(\begin{array}{ccc}\diagram{1&2&3\cr 4 &5\cr };\, &\diagram{6&7\cr 8\cr};\, &\diagram{9\cr}\end{array}\right)}\,&
\protect{\ft\!\!=\!\!\left(\begin{array}{ccc}
\diagram{2&4&6\cr 3&5\cr };\, &\diagram{7&8\cr 9\cr};\, &\diagram{1\cr}
\end{array}\right)}\\
\protect{\mathscr{R}(\gl)\!\!=\!\!\left(\begin{array}{ccc}
\diagram{&&--\cr &--\cr };\, &\diagram{&--\cr --\cr}; &\diagram{--\cr}
\end{array}\right)}&\, \protect{\mathscr{A}(\gl)\!\!=\!\!\left(\begin{array}{ccc}
\diagram{&&&+\cr &&+\cr +\cr};\, &\diagram{&&+\cr &+\cr +\cr};\, &\diagram{&+\cr +\cr}
\end{array}\right)}  & d(\ft)\!\!=\!\!(1,2,4,3,6,7,8,9),
\end{array}
$$
where $\diagram{--\cr }$ (resp., $\diagram{+\cr }$) means the node is removable (resp., addable) and $S_{\{1, 2,3\}}$ is the symmetric group on letters $1,2,3$ and so on.
\end{example}

Let $<$ be the Bruhat order on $S_n$. The following fact was first proved by Ehresmann and then rediscovered
by Dipper and James \cite{DJ:reps}, see for example, \cite[Theorem 3.8]{Mathas-book}.

\begin{lemma}[Ehresmann Theorem] Suppose that $\fs$ and $\ft$ are standard
tableaux of the same shape. Then $d(\fs)<d(\ft)$ if and only if $\fs\rhd\ft$.\label{Lem Ehresmann}\end{lemma}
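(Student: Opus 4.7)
The plan is to prove the biconditional by induction on $n$, reducing to ``elementary moves'' that swap consecutive entries $k$ and $k+1$ of a standard tableau, in keeping with the classical Ehresmann argument but with extra bookkeeping to handle the components of a multipartition. First I would set up an equivalent formulation of dominance of tableaux: for a standard $\gl$-tableau $\ft$ and $1\le k\le n$, let $\ft\!\downarrow_k$ be the sub-tableau of $\ft$ on the entries $\{1,\dots,k\}$, whose shape $\shape(\ft\!\downarrow_k)$ is a multipartition of $k$. A routine check on definitions gives that $\fs \unrhd \ft$ if and only if $\shape(\fs\!\downarrow_k) \unrhd \shape(\ft\!\downarrow_k)$ for every $k$.

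The key combinatorial lemma is the following: if $\fs,\ft\in\std(\gl)$ with $\fs\rhd\ft$, then there exists $k$ such that $\fs':=\fs\cdot(k,k+1)$ is standard, $\fs\rhd\fs'\unrhd\ft$, and $d(\fs')=d(\fs)s_k$ with $\ell(d(\fs)s_k)=\ell(d(\fs))+1$. To produce this $k$, I take the smallest entry whose node in $\fs$ differs from its node in $\ft$; standardness together with $\fs\rhd\ft$ forces $k$ to lie in $\fs$ in a component-row strictly dominating the component-row of $k+1$, so the transposition is legal. Granting this lemma, the forward direction $\fs\rhd\ft\Rightarrow d(\fs)<d(\ft)$ follows by iteration: one builds a chain $\fs=\ft_0,\ft_1,\dots,\ft_r=\ft$ with $\ft_{i+1}=\ft_i\cdot(k_i,k_i+1)$, yielding a reduced expression $d(\ft)=d(\fs)s_{k_0}\cdots s_{k_{r-1}}$ in $S_n$, and hence $d(\fs)<d(\ft)$ in the Bruhat order by the subword characterization. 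For the converse, starting from a reduced expression of $d(\ft)d(\fs)^{-1}$ and reading partial products as a chain of standard tableaux dominating $\ft$, the lemma in reverse delivers $\fs\unrhd\ft$, with strictness because $d(\fs)\ne d(\ft)$.

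The main obstacle is the combinatorial lemma, specifically the verification that $\fs\cdot(k,k+1)$ remains in $\std(\gl)$. This is more delicate than in the single-partition case because $k$ and $k+1$ may lie in different components of $\fs$. The minimality of $k$ in its definition, combined with the fact that the entries $1,\dots,k-1$ occupy identical nodes in $\fs$ and $\ft$, pins down the local configuration tightly enough that $\fs\cdot(k,k+1)$ is forced to be standard and still to dominate $\ft$. The matching length statement on the permutation side then falls out from the basic fact that $\ell(d(\fs)s_k)=\ell(d(\fs))+1$ iff $(k,k+1)$ is \emph{not} an inversion of $d(\fs)^{-1}$, which via the construction of $\ft^\gl$ translates into the required relative positions of $k$ and $k+1$ in $\fs$.
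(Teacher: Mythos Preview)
The paper does not prove this lemma at all; it records it as the classical Ehresmann theorem and refers the reader to Dipper--James \cite{DJ:reps} and \cite[Theorem~3.8]{Mathas-book}. So there is no in-paper argument to compare yours against.

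That said, your sketch has a genuine gap in the converse direction. Your key combinatorial lemma, iterated, shows that $\fs\rhd\ft$ implies $d(\ft)=d(\fs)s_{k_0}\cdots s_{k_{r-1}}$ with the lengths adding, i.e.\ $d(\fs)\le d(\ft)$ in the \emph{right weak} order; since weak order refines Bruhat order, the forward implication is fine. For the converse, however, you propose to ``start from a reduced expression of $d(\ft)d(\fs)^{-1}$'' and read off a chain of standard tableaux. The hypothesis $d(\fs)<d(\ft)$ in Bruhat order does \emph{not} in general give $\ell\big(d(\ft)d(\fs)^{-1}\big)=\ell(d(\ft))-\ell(d(\fs))$; that is precisely the weak-order condition, not the Bruhat one. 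Even when such a factorization exists, the intermediate elements $d(\fs)s_{i_1}\cdots s_{i_j}$ need not equal $d(\fu)$ for any standard $\fu$, so ``the lemma in reverse'' cannot be invoked. The standard remedy, and the content of the references the paper cites, is to use Ehresmann's tableau criterion for Bruhat order on $S_n$ (comparing, for each $k$, the increasingly sorted sets $\{u(1),\dots,u(k)\}$ and $\{v(1),\dots,v(k)\}$) and to translate it, via the construction of $\ft^{\gl}$ and $d(\ft)$, directly into the condition $\shape(\fs\!\downarrow\!k)\unrhd\shape(\ft\!\downarrow\!k)$ for all $k$.
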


\begin{Point}{}* Let $*$ be the $R$-linear anti-automorphism of $\sh$ determined by
$x_i^*=x_i$ for all $1\le i\le n$, $s_j^*=s_j$ for all $1\le j\le n-1$. Then $w^*=w^{-1}$
 for all $w\in S_n$ and $f^*=f$ for all $f\in R[x_1, \dots, x_n]\subset \sh$.
\label{*-anti-auto}\end{Point}

\begin{definition}\label{Def x_lamda u^+_lamda}Suppose that  $\gl=(\gl^1; \dots; \gl^m)$ is an $m$-multipartition of $n$
 and define $a_i=\sum_{j=1}^{i-1}|\gl^j|$ for $1\le i\le m$ with $a_1=0$. Let
 $m_\lambda=x_\lambda u_\gl^+$, where
\vspace{-1.6\jot}
$$ x_\lambda:=\sum_{w\in S_\lambda}w\qquad\text{and}\qquad
u_\lambda^+:=\prod_{i=2}^m\prod_{k=1}^{a_i}(x_k-q_i).$$
\vspace{-1.6\jot}
Finally, given standard $\lambda$-tableaux $\fs$ and
$\ft$ let $m_{\fs\ft}=d(\fs)^*m_\lambda d(\ft)$.
\end{definition}

It follows from Lemma~\ref{Lem si xj}(v) that all of elements in $RS_{\gl}$ commute
with $u_{\gl}^+$, in particular,  $m_{\gl}=x_\lambda u_\lambda^+=u_{\gl}^+x_{\gl}$.
Observe that $m_{\gl}=m_{\ft^{\gl}\ft^{\gl}}$ and $m_{\fs\ft}^*=m_{\ft\fs}$
for standard $\gl$-tableaux $\fs$ and $\ft$.
Whenever we write $m_{\fs\ft}$ in what follows $\fs$ and $\ft$ will be
standard tableaux of the {\it same shape} (and similarly, for
$f_{\fs\ft}$ etc.).

\begin{theorem}[\cite{AMR}, THEOREM 6.3]
The degenerate cyclotomic Hecke algebra $\sh$ is free as an $R$-module with
cellular basis $\{m_{\fs\ft}|\fs,\ft\in\std(\lambda) \for  \lambda \text{ an } m\text{-multipartition of }n\}$.
\label{std basis}\end{theorem}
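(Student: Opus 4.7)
The plan is to verify the three axioms of Definition~\ref{Def-Cellular-algebras} with $\Lambda=\mpn$ ordered by $\unrhd$, with $T(\lambda)=\std(\lambda)$, and with $C^\lambda_{\fs\ft}=m_{\fs\ft}$. The anti-involution axiom is immediate: by \ref{*-anti-auto} each $x_i$ and each $w\in S_n$ is $*$-invariant, so $x_\lambda^*=x_\lambda$ and $u_\lambda^{+*}=u_\lambda^+$; Lemma~\ref{Lem si xj}(v) forces $x_\lambda$ and $u_\lambda^+$ to commute, whence $m_\lambda^*=m_\lambda$, and therefore $m_{\fs\ft}^*=d(\ft)^*m_\lambda d(\fs)=m_{\ft\fs}$.

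For the basis property I would combine a dimension count with a spanning argument. By Theorem~7.5.6 of \cite{Kbook}, $\sh$ is free of rank $m^n n!$ over $R$, while the standard combinatorial identity $\sum_{\lambda\in\mpn}|\std(\lambda)|^2=m^n n!$ (both sides equal the order of $W_{m,n}$) shows that the proposed basis has the correct cardinality. It therefore suffices to prove that the $m_{\fs\ft}$ span $\sh$. I would argue this by a double induction, ordering tableau-pairs first by the dominance order on their common shape and then by the Bruhat order on the indexing permutations, which Lemma~\ref{Lem Ehresmann} identifies with the dominance order on tableaux. Starting from an arbitrary monomial $x_1^{a_1}\cdots x_n^{a_n}w$ from the Kleshchev basis, one absorbs the polynomial factor into a suitable $u_\mu^+$ using the cyclotomic relation \ref{Def DCHA}(i), moves $x_i$'s past $s_j$'s using Lemma~\ref{Lem si xj}(i)--(iv), and finally applies Garnir-type straightening in the $S_n$-factor to replace non-standard tableau contributions by standard ones modulo $\sh^{\rhd\mu}$.

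Once spanning is in hand, the multiplication axiom (iii) can be verified on the generators $s_i$ and $x_j$. Left multiplication of $m_{\fs\ft}=d(\fs)^* m_\lambda d(\ft)$ by $s_i$ either produces $m_{\fu\ft}$ for the standard tableau $\fu$ with $d(\fu)=s_i d(\fs)$ or, when the Bruhat length drops, a contribution that by Lemma~\ref{Lem Ehresmann} and the same straightening argument lands in $\sh^{\rhd\lambda}$. Left multiplication by $x_j$ is handled by moving $x_j$ through $d(\fs)^*$ via Lemma~\ref{Lem si xj}(i)--(ii) and through $u_\lambda^+$ via part~(v), generating only scalar corrections that respect the dominance filtration.

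The main obstacle is unquestionably the spanning/straightening step. One must simultaneously truncate polynomial powers to feed them into $u_\mu^+$ and carry out Garnir straightening in the symmetric-group part, while keeping careful track of which shapes $\mu$ strictly dominate $\lambda$ at each step. This is essentially Murphy's classical strategy \cite{murphy:basis,murphy:hecke} as adapted in \cite[Theorem~6.3]{AMR} from the non-degenerate cyclotomic Hecke setting to the degenerate one, and I would follow that proof closely.
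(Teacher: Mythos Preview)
The paper does not supply a proof of this theorem: it is quoted verbatim from \cite[Theorem~6.3]{AMR} and used as a black box thereafter. There is thus nothing in the paper to compare your proposal against beyond the bare citation.

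That said, your outline is the standard Murphy-style strategy that \cite{AMR} itself implements, and the pieces you check explicitly are fine: the verification of $m_{\fs\ft}^*=m_{\ft\fs}$ via \ref{*-anti-auto} and Lemma~\ref{Lem si xj}(v) is correct, and your rank argument is legitimate over any commutative ring (a spanning set of the right size in a finitely generated free $R$-module is automatically a basis, since a surjective $R$-linear endomorphism of such a module is an isomorphism). Your own diagnosis of the real work is accurate: the substance is the straightening argument, and your description of it (``absorb polynomial factors into $u_\mu^+$, push $x_i$'s past $s_j$'s, Garnir-straighten'') is a slogan rather than a proof. Since you end by saying you would follow \cite[Theorem~6.3]{AMR} closely, you and the paper are in complete agreement about where the proof actually lives.
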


 If $\ft$ is any
tableau and $k\geq0$ is an integer, let $\ft\!\downarrow\!k$ be the
subtableau of $\ft$ which contains the integers $1,\dots,k$.
Observe that $\ft$ is standard if and only if $\shape(\ft\!\downarrow\!k)$ is
an $m$-multipartition for all $k=1, \dots, n$. We extend the
dominance order $\unrhd$ on the set of $m$-multipartitions to the set of standard tableaux by defining
$\fs\unrhd\ft$ if $\shape(\fs\!\downarrow\!k)\unrhd\shape(\ft\!\downarrow\!k)$ for all
$k=1,\dots,n$; and write $\fs\rhd\ft$ if $\fs\unrhd\ft$ and
$\fs\neq\ft$. Define the {\it residue} of $k$ in $\ft$ to be $\res_\ft(k)=j-i+q_c$ if $k$
appears in node $(i, j,c)\in \ft$.

\begin{lemma}[cf.~\cite{Mathas-book}, Lemma 3.34] Assume that $R$ is a field and that
 Assumption~\ref{ppoly} holds.  Suppose that $\gl$ and $\mu$ are $m$-multipartition of $n$ and
  let $\fs\in\std(\gl)$ an $\ft\in\std(\mu)$.
\vspace{-0.2cm}
\begin{enumerate}
\setlength\itemsep{1pt}
\item $\fs=\ft$ (and $\gl=\mu$) if and only if $\res_\fs(k)=\res_\ft(k)$ for
$k=1,\dots,n$.
\item Suppose that $\gl=\mu$ and there exists an $i$ such that $\res_{\fs}(k)=
\res_{\ft}(k)$ for all $k\neq i, i+1$. Then either $\fs=\ft$ or  $\fs=\ft(i,i+1)$.
\end{enumerate}\label{s=t or s=t(i i+1)}\label{Lem res s t} \end{lemma}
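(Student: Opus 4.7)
The plan is to extract a single combinatorial key fact from Assumption~\ref{ppoly} and then run forward/backward inductions on subtableaux. First I would establish the following: if $\nu$ is an $m$-multipartition with $|\nu|\le n-1$, then any two distinct addable nodes of $\nu$ have distinct residues, and if $|\nu|\le n$, any two distinct removable nodes of $\nu$ have distinct residues. For two nodes lying in the same component $c$, the addable (resp.\ removable) nodes of the single partition $\nu^c$ lie on pairwise distinct diagonals, so their $j-i$, hence their residues, differ. For nodes in different components $c_1\neq c_2$, the residue difference is $d+(q_{c_1}-q_{c_2})$ where $d=(j_1-i_1)-(j_2-i_2)$; the elementary bounds $|j_\ell-i_\ell|\le|\nu^{c_\ell}|$ (addable) and $|j_\ell-i_\ell|\le|\nu^{c_\ell}|-1$ (removable) yield $|d|\le|\nu|<n$, and Assumption~\ref{ppoly} forces $d+q_{c_1}-q_{c_2}\neq 0$.

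For part~(i) the forward direction is immediate. For the converse I would induct on $k$ to show $\fs{\downarrow}k=\ft{\downarrow}k$. The base $k=1$ uses $\res_\fs(1)=q_{c_\fs}$ and $\res_\ft(1)=q_{c_\ft}$ together with the $d=0$ case of Assumption~\ref{ppoly} to force $c_\fs=c_\ft$. For the step, if $\fs{\downarrow}k=\ft{\downarrow}k$ has common shape $\nu$ with $|\nu|=k\le n-1$, then $k+1$ occupies an addable node of $\nu$ in each; the addable version of the key fact combined with $\res_\fs(k+1)=\res_\ft(k+1)$ forces these addable nodes to coincide, so $\fs{\downarrow}(k+1)=\ft{\downarrow}(k+1)$. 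Taking $k=n$ gives $\fs=\ft$ and in particular $\gl=\mu$.

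For part~(ii), the same forward induction applied to indices $1,\ldots,i-1$ gives $\fs{\downarrow}(i-1)=\ft{\downarrow}(i-1)$. I would then run a backward induction from $k=n$ down to $k=i+2$: assuming inductively $\shape(\fs{\downarrow}k)=\shape(\ft{\downarrow}k)$ (true at $k=n$ since $\gl=\mu$), the entry $k$ occupies a removable node of this common shape, and the removable version of the key fact plus $\res_\fs(k)=\res_\ft(k)$ forces the same removable node in $\fs$ and $\ft$. This propagates the shape equality to $k-1$ and shows that $k$ sits at the same node in both tableaux. Iterating down to $k=i+2$ yields $\shape(\fs{\downarrow}(i+1))=\shape(\ft{\downarrow}(i+1))$ and the agreement of $\fs$ and $\ft$ on the positions of $i+2,\ldots,n$.

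Finally, combining this with the agreement on $1,\ldots,i-1$ and the common shape $\gl$, the unordered pair of nodes occupied by $\{i,i+1\}$ must be the same in $\fs$ and $\ft$; hence either $\fs=\ft$ or $\fs=\ft(i,i+1)$. The only real obstacle is the combinatorial key fact at the start, specifically the diagonal bounds on addable/removable nodes that let one invoke Assumption~\ref{ppoly}; once that is set up, both parts reduce to routine inductive unwindings of the tableau from either end.
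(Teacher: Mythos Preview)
Your proof is correct and rests on the same idea as the paper's: Assumption~\ref{ppoly} forces distinct addable (resp.\ removable) nodes of a multipartition of size $<n$ (resp.\ $\le n$) to have distinct residues, after which one peels off entries of the tableaux one at a time. The paper's sketch runs the induction purely backward on $n$ via removable nodes, whereas you run it forward via addable nodes for (i) and from both ends for (ii); this is a cosmetic difference, and your write-up is in fact more explicit than the paper's about why the diagonal bound $|d|<n$ holds.
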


\vspace{-5\jot}\begin{proof}
Note that the nodes of $\fs$  and $\ft$ containing entry $n$ are removable nodes,
which have distinct residues if they are distinct nodes. Then both parts of Lemma follow by
using the induction argument on $n$. \end{proof}

The Lemma says that when Assumption~\ref{ppoly} holds the residues separate the standard tableaux;
this enables us to give the orthogonal basis of degenerate cyclotomic Hecke algebras
and Specht modules. Notice that both parts of the Lemma can be fail if the assumption does not hold.

If $\gl$ is an $m$-multipartition then  let $\sh^{\rhd\gl}$ be the free $R$-submodule of $\sh$ with
basis $\{m_{\fs\ft}|\fs,\ft\in\std(\mu) \for \mu\rhd \gl\}$.
It follows from Theorem~\ref{std basis} and \ref{Def-Cellular-algebras}(iii)
that $\sh^{\rhd\gl}$ is a two-sided ideal of $\sh$.

\begin{definition}\label{Def Specht} The {\it Specht module} $S^\lambda$ is the left
$\sh$-module $\sh m_\lambda/(\sh m_\lambda\cap\sh^{\rhd\gl})$, which is a submodule of
$\sh/\sh^{\rhd\gl}$.\end{definition}

 Theorem~\ref{std basis} implies that $S^\lambda$ is a free $R$-module with basis
$\{m_\ft|\ft\in\std(\lambda)\}$, where $m_\ft=m_{\ft\ft^\gl}+\sh^{\rhd\gl}$.
Further, by the general theory of cellular algebras, there is a natural associative
bilinear form $\la \,,  \ra$ on
$S^\lambda$ which is determined by either$$\la m_\fs,m_\ft\ra m_\lambda\equiv
m_{\ft^\gl\fs}m_{\ft\ft^\gl}\mod\sh^{\rhd\gl}, \, \Or \,\la m_\fs,m_\ft
\ra m_{\fa\fb}\equiv
m_{\fa\fs}m_{\ft\fb}\mod\sh^{\rhd\gl}, \, \for \fa, \fb,\fs,\ft\in\std(\gl).$$

Now we begin to determine the ``Young seminorm form" for Specht modules. Our start point is the following fact.

\begin{lemma}[cf.\cite{Mathas-book}, Lemma~3.29] Let $\gl$ be an $m$-multipartition of $n$ and let $\fs$ and
$\ft$ be standard $\gl$-tableaux such that $\fs\rhd\ft=\fs(i, i+1)$ for some $i$ with
$1\le i<n$. For $k=1, \dots, n$, suppose that there exist elements $r_{\fs}(k)\in R$ such that
$$x_km_{\fs}=r_\fs(k)m_{\fs}
       +\sum_{\fa\rhd\fs}r_{\fa} m_{\fa} \text{ for some } r_{\fa}\in R.$$
       Then for $k=1, \dots, n$, there exist elements $r_{\fb}\in R$ such that
   $$x_km_{\ft}=r_\ft(k)m_{\ft}
       +\sum_{\fb\rhd\ft}r_{\fb} m_{\fb},$$
       where $r_{\ft}(k)=r_{\fs}(k)$ if $k\neq i, i+1$, $r_{\ft}(i)=r_{\fs}(i+1)$  and $r_{\ft}(i+1)=r_{\fs}(i)$.
\label{Lem-x_km_st}\end{lemma}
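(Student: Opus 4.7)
\emph{Proof sketch.} My plan is to transfer the assumed expansion of $x_k m_\fs$ to one for $x_k m_\ft$ by combining the pivot identity $m_\ft = s_i m_\fs$ in $S^\gl$ with the commutation relations of Lemma~\ref{Lem si xj}.

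For the pivot identity, note that $\ft = \fs(i,i+1)$ gives $d(\ft) = d(\fs)\,s_i$, and $\fs \rhd \ft$ combined with Ehresmann's Theorem (Lemma~\ref{Lem Ehresmann}) forces $\ell(d(\ft)) = \ell(d(\fs))+1$. Applying the anti-automorphism $*$ from \ref{*-anti-auto} (which fixes each $s_j$) yields $d(\ft)^* = s_i\,d(\fs)^*$ and hence $m_{\ft\ft^\gl} = s_i\,m_{\fs\ft^\gl}$; passing to $S^\gl$ gives $m_\ft = s_i\,m_\fs$.

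Next I would apply $x_k$ and use Lemma~\ref{Lem si xj}. When $k \neq i, i+1$, part~(ii) gives $x_k s_i = s_i x_k$, so substituting the hypothesis produces
$$x_k m_\ft \;=\; r_\fs(k)\,m_\ft + \sum_{\fa\rhd\fs} r_\fa\, s_i m_\fa.$$
When $k = i$, the relation $x_i s_i = s_i x_{i+1} - 1$ from part~(i) gives
$$x_i m_\ft \;=\; r_\fs(i+1)\,m_\ft - m_\fs + \sum_{\fa\rhd\fs} r_\fa\, s_i m_\fa,$$
and $k = i+1$ is symmetric via $x_{i+1} s_i = s_i x_i + 1$, yielding leading coefficient $r_\fs(i)$ together with correction $+m_\fs$. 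In each case the coefficient of $m_\ft$ matches the prescribed $r_\ft(k)$ of the swap rule.

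It remains to show the error terms lie in $\mathrm{Span}_R\{m_\fb : \fb \rhd \ft\}$. The $\pm m_\fs$ corrections are harmless because $\fs \rhd \ft$. For each $s_i m_\fa$ with $\fa \rhd \fs$ (hence $\fa \rhd \ft$), I would expand $s_i m_\fa = s_i d(\fa)^*\,m_\gl$ via the coset decomposition $S_n = S_\gl D_\gl$ together with $\sigma\,m_\gl = m_\gl$ for $\sigma \in S_\gl$ (which follows from $\sigma x_\gl = x_\gl$ and Lemma~\ref{Lem si xj}(v)), rewriting through Garnir-type relations when $\fa s_i$ is non-standard; each resulting Murphy-basis element then dominates $\fa$ or $\fa s_i$, and both of these dominate $\ft$. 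The main obstacle lies precisely here: confirming the triangularity firmly enough to rule out any $m_\fc$ with $\fc \not\rhd \ft$, in particular any stray $m_\ft$ component that would corrupt the leading coefficient. This is handled by a downward induction on $\ell(d(\ft))$ whose trivial base case is $\ft = \ft^\gl$ and whose inductive step is exactly the present lemma, paralleling Mathas's treatment in \cite[Lemma~3.29]{Mathas-book}.
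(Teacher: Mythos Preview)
Your approach is essentially the same as the paper's: establish $m_\ft = s_i m_\fs$ via Ehresmann, then push $x_k$ past $s_i$ using Lemma~\ref{Lem si xj} to read off the swapped coefficients. The paper is in fact terser than you are---it simply writes the error as $\sum_{\fa\rhd\ft} r_\fa m_\fa$ without justifying the triangularity you flag; your identification of this as the delicate point, handled by Murphy-basis straightening as in \cite[Lemma~3.29]{Mathas-book}, is correct and more careful than the paper. One small slip: the induction you describe is \emph{upward} on $\ell(d(\ft))$ (equivalently, downward in dominance), since the base case $\ft=\ft^\gl$ has length zero and each step increases length by one.
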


\begin{proof}
First note that $m_\ft=s_i m_{\fs}$ because $\fs\rhd\ft$ which implies that $\ell(\fs)<\ell(\ft)$ by Lemma~\ref{Lem Ehresmann}.  Therefore, by Lemma~\ref{Lem si xj}(ii), if $k\neq i, i+1$ then
 $$x_km_{\ft}=x_ks_im_{\fs}=s_ix_km_{\fs}=r_{\fs}(k)m_{\ft}+\sum_{\fa\rhd\ft}r_{\fa} m_{\fa}.$$
 Next suppose that $k=i+1$. Then using the first equality of Lemma~\ref{Lem si xj}(ii)
 $$x_{i+1}m_{\ft}=x_{i+1}s_im_{\fs}=(1+s_ix_i)m_{\fs}=r_{\fs}(i)m_{\ft}+\sum_{\fb\rhd\ft}r_{\fb} m_{\fb}.$$
 The case $k=i$ is similar to the case $k=i+1$ by using the second equality of Lemma~\ref{Lem si xj}(ii).
 Finally, equating the coefficients of these formulae, we complete the proof.
\end{proof}
 \begin{theorem}
[\cite{AMR}, LEMMA 6.6] Suppose that $\gl$ is an $m$-multipartition of $n$ and that
$\fs$ and $\ft$ are standard $\lambda$-tableaux. Suppose
that $k$ is an integer with $1\le k\le n$. Then there exist
$r_{\fa}\in R$ such that
$$\mathsf{(i)}\quad\quad\quad  x_km_{\fs\ft}=\res_\fs(k)m_{\fs\ft}
       +\sum_{\fa\rhd\fs}r_{\fa} m_{\fa\ft}
           \bmod\sh^{\rhd\gl},$$
                   or equivalently, there exist
$r_{\fa}\in R$ such that $$\mathsf{(ii)}\quad\quad\quad x_km_{\fs}=\res_\fs(k)m_{\fs}
       +\sum_{\fa\rhd\fs}r_{\fa} m_{\fa}
           \bmod\sh^{\rhd\gl}.$$
\label{x_k action}\end{theorem}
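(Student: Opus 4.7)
The plan is to prove (ii) and derive (i) as an immediate consequence. Since $m_{\fs\ft} = d(\fs)^{*} m_{\gl} d(\ft)$ and $\sh^{\rhd\gl}$ is a two-sided ideal, right-multiplying the congruence (ii) by $d(\ft)$ gives (i); conversely, specialising $\ft = \ft^{\gl}$ in (i) recovers (ii). Hence it suffices to prove (ii).

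For (ii), I would induct downward in the dominance order on $\std(\gl)$. The maximal element is $\ft^{\gl}$. For the inductive step, given $\fs \neq \ft^{\gl}$ in $\std(\gl)$, the fact that the dominance order on $\std(\gl)$ is generated by adjacent transpositions provides $\fs' \in \std(\gl)$ and $1 \le i < n$ with $\fs' \rhd \fs$ and $\fs = \fs'(i, i+1)$. The inductive hypothesis gives $x_k m_{\fs'} \equiv \res_{\fs'}(k)\, m_{\fs'} + \sum_{\fa \rhd \fs'} r_{\fa}\, m_{\fa} \pmod{\sh^{\rhd\gl}}$ for every $k$, and Lemma~\ref{Lem-x_km_st} converts this into the desired expansion for $\fs$, whose diagonal coefficients satisfy $r_{\fs}(k) = r_{\fs'}(k)$ when $k \notin \{i, i+1\}$, together with $r_{\fs}(i) = r_{\fs'}(i+1)$ and $r_{\fs}(i+1) = r_{\fs'}(i)$. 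The residues transform by exactly the same swap rule under $\fs = \fs'(i,i+1)$: only the nodes occupied by $i$ and $i+1$ interchange, so $\res_{\fs}(i) = \res_{\fs'}(i+1)$, $\res_{\fs}(i+1) = \res_{\fs'}(i)$, and $\res_{\fs}(k) = \res_{\fs'}(k)$ otherwise. Matching these with the inductive hypothesis yields $r_{\fs}(k) = \res_{\fs}(k)$ and closes the induction.

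The remaining base case $\fs = \ft^{\gl}$, namely
\[
x_k m_{\gl} \equiv \res_{\ft^{\gl}}(k)\, m_{\gl} \pmod{\sh^{\rhd\gl}}, \qquad 1 \le k \le n,
\]
is the principal technical task and the main obstacle. Writing $m_{\gl} = u_{\gl}^{+} x_{\gl}$ and using that $u_{\gl}^{+} \in R[x_1, \dots, x_n]$ commutes with $x_k$ by Lemma~\ref{Lem si xj}(iii) reduces the problem to analysing $u_{\gl}^{+}\, x_k x_{\gl}$. If $k$ occupies the node $(i, j, c)$ of $\ft^{\gl}$, so that $\res_{\ft^{\gl}}(k) = j - i + q_c$, then the ``content'' contribution $j - i$ is produced by the classical Jucys--Murphy action on the row symmetriser $x_{\gl}$, which one extracts by iterating $x_k = s_{k-1} x_{k-1} s_{k-1} + s_{k-1}$ together with the commutation identities of Lemma~\ref{Lem si xj}; the cyclotomic shift $q_c$ emerges from the factor structure of $u_{\gl}^{+} = \prod_{c'=2}^{m}\prod_{k'=1}^{a_{c'}} (x_{k'} - q_{c'})$ combined with the cyclotomic relation $(x_1 - q_1)\cdots(x_1 - q_m) = 0$ (this being precisely where the cyclotomic quotient is felt). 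All remaining summands re-express, via the cellular basis of Theorem~\ref{std basis}, as combinations of $m_{\fa\fb}$'s with $\shape(\fa) = \shape(\fb) \rhd \gl$, and hence lie in $\sh^{\rhd\gl}$. Carrying out this bookkeeping cleanly for a general position of $k$ is the essential difficulty; the argument parallels Mathas's treatment \cite{Mathas-J-Algebra} in the non-degenerate Ariki--Koike setting, with the degenerate relation $s_i x_{i+1} - x_i s_i = 1$ replacing its Hecke-algebra deformation.
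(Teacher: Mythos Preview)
Your overall architecture matches the paper's exactly: reduce (i) to (ii), establish the base case $\fs=\ft^{\gl}$, and propagate downward via Lemma~\ref{Lem-x_km_st}. The reduction and the inductive step are correct and are precisely what the paper does.

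The base case, however, requires more than your sketch supplies, and the mechanism is not quite what you describe. The paper iterates $x_k=s_{k-1}+s_{k-1}x_{k-1}s_{k-1}$ down to the first entry $l=a_c+1$ of the $c$th component; the residual term $s_{k-1}\cdots s_{l-1}(x_{l-1}-q_c)\,u_{\gl}^{+}\,s_{l-1}\cdots s_{k-1}x_{\gl}$ is shown to lie in $\sh^{\rhd\gl}$, and \emph{this} is where $q_c$ enters---via the cellular filtration, not via the raw cyclotomic relation $\prod_i(x_1-q_i)=0$ as you suggest. What remains is a symmetric-group identity $(*)$ asserting that a certain signed sum of permutations acting on $m_{\gl}$ yields $(j-i)m_{\gl}$. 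The paper proves $(*)$ by a double induction: first on $n$, to reduce to the case where $k$ lies in the last nonempty row of its component (by peeling off later rows into a smaller multipartition $\mu$); then on $k$ along that row, using $s_{k-1}m_{\gl}=m_{\gl}$, to reduce to the first column. The corner case $k=a_{c+1}$ with $\gl^c$ ending in a single box is then handled separately: one first argues that $x_{a_{c+1}}m_{\gl}\equiv r_c\,m_{\gl}$ for some scalar $r_c$, and then identifies $r_c$ by passing via Lemma~\ref{Lem-x_km_st} to an auxiliary tableau $\fu=\ft^{\gl}s_{a_{c+1}-1}\cdots s_{p+1}$ for which the eigenvalue can be read off directly. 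Your phrase ``classical Jucys--Murphy action on the row symmetriser'' covers only the easy step along a single row (the induction on $k$); the reductions on $n$ and the corner-case determination of $r_c$ are the substance of the argument and are absent from your outline.
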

\begin{proof}
Note that $m_{\fs\ft}d(\ft)^*=m_{\fs\ft^{\gl}}$ for any standard tableaux $\fs$
and $\ft$, we only need to show (ii). First consider the case where $\fs=\ft^{\gl}$,
 then $m_{\fs}=m_{\gl}+\sh^{\rhd\gl}$. Suppose that $k$ appears in node $(i, j,c)\in\fs$
  and that $l$ is the smallest integer appears in component $\ft^{\gl^c}$. Then $l\leq k$.
   Working modulo $\sh^{\rhd\gl}$ and using Lemma~\ref{Lem si xj} (ii), (iv), we obtain that
$$\begin{aligned}
x_km_{\ft^{\gl}}\equiv x_km_{\gl}&=s_{k-1}m_{\gl}+s_{k-1}x_{k-1}s_{k-1}m_{\gl}\\
&=s_{k-1}m_{\gl}+s_{k-1}(s_{k-2}+\cdots+s_{k-2}\cdots s_{l}\cdots s_{k-2})s_{k-1}m_{\gl}
+s_{k-1}\cdots s_{l-1}x_{l-1}s_{l-1}\cdots s_{k-1}m_{\gl}\\
&=(s_{k-1}+q_c)m_{\gl}+s_{k-1}(s_{k-2}+\cdots+s_{k-2}\cdots s_{l-1}\cdots s_{k-2})s_{k-1}m_{\gl}+h,
\end{aligned}$$
where $h=s_{k-1}\cdots s_{l-1}(x_{l-1}-q_c)s_{l-1}\cdots s_{k-1}m_{\gl}=
s_{k-1}\cdots s_{l-1}(x_{l-1}-q_c)u^+_{\gl}s_{l-1}\cdots s_{k-1}x_{\gl}\in\sh^{\rhd\gl}$. Therefore
$$x_km_{\ft^{\gl}}=(s_{k-1}+q_c)m_{\gl}+s_{k-1}(s_{k-2}+\cdots+s_{k-2}\cdots s_{l-1}\cdots
s_{k-2})s_{k-1}m_{\gl} \bmod \sh^{\rhd\gl}.$$
So it is sufficient to show that, for $k=1, \dots, n$,  $$(*) \quad\quad s_{k-1}m_{\gl}+s_{k-1}
(s_{k-2}+\cdots+s_{k-2}\cdots s_{l-1}\cdots
s_{k-2})s_{k-1}m_{\gl}=(\res_{t^{\gl}}(k)-q_c)m_{\gl}\equiv(j-i)m_{\gl}.$$
When $n=1$ there is nothing to prove so by induction we assume that $(*)$ holds for all smaller integers of $n$.
We now proceed by induction on $k$. The case $k=1$ being trivial because
$(x_1-q_c)m_{\gl}\equiv0$ and $\res_{\ft^{\gl}}(k)=q_c$.
Suppose first that  $\gl_{i+1}^c\neq 0$. Let $\mu^c=(\gl_1^c, \dots, \gl^c_{i})$ and
$\mu=(\gl^1; \cdots; \gl^{c-1}; \mu^c; \gl^{c+1}; \dots; \gl^m)$. Then
$a=|\mu|<n$ and $m_{\gl}=m_{\mu}h+\sh^{\rhd\gl}$ for some element $h$ of the subalgebra of $RS_{\gl}$ generate by
$s_{k+1}, \dots, s_{a-1}$. Furthermore, $x_km_{\mu}\in\sh_{m, a}(Q)$ and $h, x_k$
commute by Lemma~\ref{Lem si xj}(ii). Hence working modulo $\sh^{\rhd\gl}$ and arguing by induction on $n$,
$$x_km_{\ft^{\gl}}\equiv x_km_{\mu}h=\res_{\ft^{\mu}}(k)m_{\mu}h\equiv \res_{\ft^{\gl}}(k)m_{\gl}.$$
Thus we may assume that $k$ is in the last row of the $c$-component of $\ft^{\gl}$.

Next suppose that $k$ is not in the first column of $\ft^{\gl^{c}}$ and that we have known the result for small $k$.
Then $k>1$ and $s_{k-1}$ is an element of $S_{\gl^c}$, so $s_{k-1}m_{\gl}=m_{\gl}$ and therefore, by induction on $k$,
$$x_km_{\gl}=(s_{k-1}+s_{k-1}x_{k-1}s_{k-1})m_{\gl}=(1+\res_{\ft^{\gl}}(k-1))m_{\gl}.$$
This reduces us to the case that $k$ appears in the last row and the first column of $\ft^{\gl^c}$. Thus we can assume that
$k=a_{c+1}=\sum_{l=1}^c|\gl^l|$ and that $\gl^c=(\gl^c_1, \dots, \gl_{i-1}^c, 1)$. Let $p$ be the integer
appears in the node $(i-1, 1, c)$ and let $\fu$ be the standard $\gl$-tableau $\ft^{\gl}s_{a_{c+1}-1}\dots s_{p+1}$.
Then the  second last row of $\fu$ contains the integers $p, p+2, \dots, a_{c+1}$ and the last row of $\fu$ contains the
single integer $p+1$.

So far we have shown that $x_km_{\gl}=\res_{\ft^{\gl}}=\res_{\ft^{\gl}}(k)m_{\gl}$ for $a_{c} \le k<a_{c+1}$. We also know
that $(x_{a_c+1}+\dots+x_{a_{c+1}})m_{\gl}=rm_{\gl}$ for some $r\in R$ since $x_{a_c+1}+\dots+x_{a_{c+1}}$ belongs to the
center of $\sh$. Consequently, $x_{a_{c+1}}m_{\gl}=r_{c}m_{\gl}$ for some $r_{c}\in R$. Therefore,
by Lemma~\ref{Lem-x_km_st}
$$x_km_{\fu}=\res_{\fu}(k)m_{\fu}+\sum_{\fa\rhd\fu}r_{\fa}m_{\fa} \quad\text{ and }\quad
x_{k+1}m_{\fs}=r_cm_{\fu}+\sum_{\fb\rhd\fs}r_{\fb}m_{\fb}.$$
Now, by induction, we have
$$\begin{aligned}x_{p+1}m_{\fs}=(1+s_{p}x_p)s_{p}m_{\fs}&=-(1+s_px_p)m_{\fs}+\sum_{\fa\rhd\fs}r_{\fa}m_{\fa}\\
&=(\res_{\ft^{\gl}}(p)-1)m_{\fs}+\sum_{\fa\rhd\fs}r'_{\fa}m_{\fa}\\
&=\res_{\ft^{\gl}}(a_{c+1})m_{\fs}+\sum_{\fa\rhd\fs}r'_{\fa}m_{\fa},\end{aligned}$$
Consequently, $r_{c}=\res_{\ft^{\gl}}(a_{c+1})$ as required.
The general case follows by Lemma~\ref{Lem-x_km_st} and induction argument on $\ell(\fs)$.
\end{proof}

\begin{remark}\label{Remark x_k m_st} (i) If $\fu$ is a standard tableau and $\res_{\fu}(k)\neq\res_{\fs}(k)$
for some integer $1\le k\le n$, then
$$\frac{x_k-\res_\fu(k)}{ \res_\fs(k)-\res_\fu(k)}m_{\fs\ft}=m_{\fs\ft}+\sum_{\fa\rhd\fs,
\fb}r_{\fa\fb}m_{\fa\fb} \quad \text{ for some } r_{\fa\fb}\in R.$$\end{remark}

(ii) Apply the $*$-anti-automorphism, we get that $$\quad\quad\quad  m_{\fs\ft}x_k=\res_\ft(k)m_{\fs\ft}
       +\sum_{\fa\rhd\ft}r_{\fa} m_{\fs\fa}
           \bmod\sh^{\rhd\gl} \quad \text{ for some } r_{\fa}\in R.$$

Let $\mcr(k)$ be
the complete set of possible residues $\res_\ft(k)$ as $\ft$ runs
over the set of all standard tableaux. Note that the residues in $\mcr(k)$ are all distinct.
\begin{definition}
[\protect{\cite[DEFINITION 6.7]{AMR}}]
Suppose that $\gl$ is an $m$-multipartition of $n$ and that $\fs$ and $\ft$ are standard $\lambda$-tableaux.
\begin{enumerate}
\item Let
$F_\ft:=\displaystyle\prod_{k=1}^n
  \displaystyle\prod_{\,\mcr(k)\ni c\neq\res_\ft(k)}
         \frac{x_k-c}{\res_\ft(k)-c}$.
\item Let $f_{\fs\ft}:=F_\fs m_{\fs\ft}F_\ft$.
\end{enumerate}\label{def F_t f_st}\end{definition}

Note that all of the factors in $F_\ft$ commute so there are no need to specify an order of the terms in the product.
The elements $F_{\ft}$ are defined for any choice of field $R$, regardless of whether or not Assumption~\ref{ppoly} holds.

 Extend the
dominance order to the pairs of standard tableaux by defining
$(\fs,\ft)\unrhd(\fa,\fb)$ if $\fs\unrhd\fa$ and $\ft\unrhd\fb$, and write $(\fs,\ft)\rhd(\fa,\fb)$
 if $(\fs,\ft)\unrhd(\fa,\fb)$ and $(\fs,\ft)\neq (\fa,\fb)$.

\begin{proposition}
Assume that Assumption~\ref{ppoly} holds and that $\gl$ is an $m$-multipartion of $n$. Suppose that $\fs$ and $\ft$
are standard $\lambda$-tableaux.
\begin{enumerate}
\setlength\itemsep{1\jot}
\item
$f_{\fs\ft}=m_{\fs\ft}
             +\displaystyle\sum_{(\fa,\fb)\rhd(\fs,\ft)}r_{\fa\fb}m_{\fa\fb}$
for some $r_{\fa\fb}\in R$;

\item If $\fu$ is a standard $\gl$-tableau, then $F_\fu f_{\fs\ft}=\delta_{\fs\fu}f_{\fs\ft}$ and
$f_{\fs\ft}F_\fu=\delta_{\ft\fu}f_{\fs\ft}$;

\item If $k$ is an integer with $1\le k\le n$, then  $x_kf_{\fs\ft}=\res_\fs(k)f_{\fs\ft}$.

\item If $\fa$ and $\fb$ are standard $\gl$-tableaux, then $f_{\fs\ft}f_{\fa\fb}=
\delta_{\fa\ft}r_{\ft}f_{\fs\fb}$ for some $r_{\ft}\in R$.
\end{enumerate}\label{Prop-f_st properties}
\end{proposition}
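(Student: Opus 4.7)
My plan is to establish the four parts in the order (i), (iii), (ii), (iv), since both (ii) and (iv) rest on (iii) as the pivotal technical input.

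For (i), I would expand $f_{\fs\ft} = F_\fs m_{\fs\ft} F_\ft$ by multiplying in the factors of $F_\fs$ on the left and of $F_\ft$ on the right one at a time. Each individual factor $(x_k - c)/(\res_\fs(k) - c)$ with $c \in \mcr(k) \setminus \{\res_\fs(k)\}$ applied from the left to $m_{\fs\ft}$ yields $m_{\fs\ft}$ plus a combination of $m_{\fa\ft}$ with $\fa \rhd \fs$, modulo $\sh^{\rhd\gl}$, by Remark~\ref{Remark x_k m_st}(i); the symmetric statement for right multiplication by the factors of $F_\ft$ follows from part~(ii) of the same remark. The normalizing denominators force the coefficient of $m_{\fs\ft}$ to equal $1$, and composing all factors produces the claimed triangular expansion with $(\fa,\fb) \rhd (\fs,\ft)$.

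For (iii), the key observation is that by Lemma~\ref{Lem si xj}(iii) the Jucys-Murphy elements pairwise commute, so $F_\fs$ is a polynomial in the commuting variables $x_1,\dots,x_n$ and in particular commutes with every $x_k$. Splitting $F_\fs = F_\fs^{\neq k} F_\fs^{(k)}$ with $F_\fs^{(k)} = \prod_{c \in \mcr(k),\, c \neq \res_\fs(k)}(x_k - c)/(\res_\fs(k) - c)$, I compute
$$(x_k - \res_\fs(k))\, F_\fs \;=\; \alpha\, F_\fs^{\neq k} \prod_{c \in \mcr(k)}(x_k - c)$$
for an invertible scalar $\alpha$. The product $\prod_{c \in \mcr(k)}(x_k - c)$ is, up to a unit, the minimal polynomial of $x_k$ on $\sh$ (Corollary~\ref{Cor x_k F_k}(ii)) and therefore vanishes identically in~$\sh$. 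Hence $(x_k - \res_\fs(k)) f_{\fs\ft} = 0$. This is the main technical hurdle; should Corollary~\ref{Cor x_k F_k}(ii) be logically downstream, I would instead prove the identity by a simultaneous induction on the cell filtration and on the dominance order on tableaux, combining Theorem~\ref{x_k action} with the residue-separation property of Lemma~\ref{Lem res s t} valid under Assumption~\ref{ppoly}.

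Part (ii) is then a pure polynomial-evaluation consequence: since $F_\fu$ is a polynomial in the commuting $x_k$'s, by (iii) it acts on $f_{\fs\ft}$ from the left as $F_\fu$ evaluated at $(\res_\fs(1),\dots,\res_\fs(n))$. For $\fu = \fs$ every factor equals $1$; for $\fu \neq \fs$, Lemma~\ref{Lem res s t} produces some $k$ with $\res_\fu(k) \neq \res_\fs(k)$, and then the factor $(x_k - \res_\fs(k))/(\res_\fu(k) - \res_\fs(k))$ sitting inside $F_\fu$ evaluates to $0$. The right-hand identity $f_{\fs\ft}F_\fu = \delta_{\ft\fu} f_{\fs\ft}$ is obtained identically from the right analogue of (iii), itself coming from Remark~\ref{Remark x_k m_st}(ii) or by conjugating with the anti-automorphism~$*$ of~\ref{*-anti-auto}.

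Finally for (iv), I would write $f_{\fs\ft} f_{\fa\fb} = f_{\fs\ft} \cdot F_\fa \cdot m_{\fa\fb} F_\fb$ and apply the right-hand half of (ii) to collapse $f_{\fs\ft} F_\fa = \delta_{\ft\fa} f_{\fs\ft}$, which already delivers vanishing unless $\ft = \fa$. When $\ft = \fa$, the surviving product $f_{\fs\ft} m_{\ft\fb} F_\fb$ is, by (iii) and its right counterpart, a common left-eigenvector for the $x_k$ with eigenvalues $\res_\fs(k)$ and a right-eigenvector with eigenvalues $\res_\fb(k)$; by Lemma~\ref{Lem res s t} applied across all shapes in the cellular filtration, the space of such bi-eigenvectors is one-dimensional and spanned by $f_{\fs\fb}$, whence $f_{\fs\ft}f_{\ft\fb} = r\cdot f_{\fs\fb}$ for some scalar $r$. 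To see that $r$ depends only on $\ft$, I would expand $f_{\fs\ft}$ via~(i) and invoke the cellular multiplication identity $m_{\fs\ft} m_{\ft\fb} \equiv r_\ft\, m_{\fs\fb} \pmod{\sh^{\rhd\gl}}$, with $r_\ft$ the intrinsic cellular-form value at $\ft$; the higher-dominance correction terms contributed by (i) are annihilated upon reapplying the eigenspace-projection identity (ii), leaving $r = r_\ft$.
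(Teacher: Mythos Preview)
Your arguments for (i), (ii), and (iv) are correct and essentially coincide with the paper's. The gap is in~(iii).

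Your primary route appeals to Corollary~\ref{Cor x_k F_k}(ii), but that result is indeed logically downstream: its proof uses Theorem~\ref{Them idempotents}, which rests on Theorem~\ref{Them orthogonal basis}, which in turn requires this very Proposition. You flagged this risk, but your fallback (``simultaneous induction on the cell filtration and on dominance'') is not a proof as stated. In particular, Theorem~\ref{x_k action} only gives upper-triangularity of $x_k$ on each cell layer, so $\prod_{c\in\mcr(k)}(x_k-c)$ is a priori merely \emph{nilpotent} on~$\sh$, not zero; extracting $(x_k-\res_\fs(k))f_{\fs\ft}=0$ from triangularity alone still requires an additional idea that your sketch does not supply.

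The paper's device is a ``high-power'' trick, establishing (ii) and (iii) together. First, directly from Theorem~\ref{x_k action} and Lemma~\ref{Lem res s t}, one observes that for $\fu\neq\fs$ the operator $F_\fu$ contains a factor $(x_k-\res_\fs(k))$, so $F_\fu m_{\fs\ft}$ lies in the span of $\{m_{\fa\ft}:\fa\rhd\fs\}$ modulo $\sh^{\rhd\lambda}$; iterating, one obtains $F_\fu^N f_{\fs\ft}=\delta_{\fu\fs}f_{\fs\ft}$ once $N$ exceeds the number of standard tableaux. One then sets $\tilde f_{\fs\ft}:=F_\fs^{N}f_{\fs\ft}F_\ft^{N}$ and checks $x_k\tilde f_{\fs\ft}=\res_\fs(k)\tilde f_{\fs\ft}$, since the higher-dominance terms produced by $x_k$ are annihilated by $F_\fs^{N}$. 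This already gives (ii) and (iii) for~$\tilde f$; finally one inverts the unitriangular relation of~(i) to write $m_{\fs\ft}$ in the $\tilde f$-basis and applies $F_\fs(\,\cdot\,)F_\ft$ to both sides, yielding $f_{\fs\ft}=\tilde f_{\fs\ft}$.
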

\begin{proof}  (i) follows directly from Theorem~\ref{x_k action}(i), Remark~\ref{Remark x_k m_st}, and
Definition~\ref{def F_t f_st}. Indeed, we have
$$\begin{aligned}f_{\fs\ft}&=F_{\fs}m_{\fs\ft}F_{\ft}\\&=\biggl(\prod_{k=1}^n
  \prod_{\,\mcr(k)\ni c\neq\res_\ft(k)}
         \frac{x_k-c}{\res_\fs(k)-c}m_{\fs\ft}\biggr)F_{\ft}\\
         &=\biggl(m_{\fs\ft}+\sum_{\fa\rhd\fs}r_{\fa}m_{\fa\ft}\biggr)\prod_{k=1}^n
  \prod_{\,\mcr(k)\ni c\neq\res_\ft(k)}\frac{x_k-c}{\res_\ft(k)-c}\\
  &=m_{\fs\ft}+\sum_{(\fa, \fb)\rhd(\fs, \ft)}r_{\fa\fb}m_{\fa\fb}.\end{aligned}$$

 (ii) and (iii) can be proved by using a variation argument of the proof of \cite[Proposition 3.35]{Mathas-book}.
  For the convenient of the reader especially for myself, we contain the proof. By (i) $f_{\fs\ft}=m_{\fs\ft}+\displaystyle\sum_{(\fa,\fb)\rhd(\fs,\ft)}r_{\fa\fb}m_{\fa\fb}$
for some $r_{\fa\fb}\in R$. First suppose that $\fu$ is a standard $\gl$-tableau such that $\fu\rhd \fs$,
 by Lemma~\ref{Lem res s t}(ii), there exists an integer $k_1$ such that $\res_{\fu}(k_1)\neq \res_{\fs}(k_1)$.
 Then $x_{k_1}-\res_{\fu}(k_1)$ is a factor of $F_{\fu}$, so $F_{\fu}m_{\fs\ft}=\sum_{\fb\rhd \fs}r_{\fb}m_{\fb\ft}$
  for some $r_{\fb}\in R$ by Theorem~\ref{x_k action}(i). Extended the dominance order $\unrhd$ to the total order $>$
  on the set of all standard $\gl$-tableaux, and chose $\fc_2$ with respect to $>$ with $r_{\fc_2}\neq 0$.
  Then $\fc_{2}\rhd \fs$
  and, as before, there exists an integer $k_2$ such that $\res_{\fc_2}(k_1)\neq \res_{\fs}(k_2)$.
  Therefore $F_{\fu}^2m_{\fs\ft}=\sum_{\fc\rhd\fb\rhd\fs}m_{\fc\ft}$. Continuing in this way shows that
  $F_{\fu}^Nm_{\fs\ft}=0$ whenever $\fu\rhd\fs$, where $N=\sum_{\gl}|\std(\gl)|$.
   So $F_{\fu}^Nf_{\fs\ft}=\delta_{\fu\fs}f_{\fs\ft}$. Similarly, $f_{\fs\ft}F_\fu^N=\delta_{\ft\fu}f_{\fs\ft}$ by using
   Remark~\ref{Remark x_k m_st}(ii).

   Next let $\tilde{f}_{\fs\ft}=F_{\fs}^Nf_{\fs\ft}F_{\ft}$. Then
   $$\begin{aligned}x_{k}\tilde{f}_{\fs\ft}=x_{k}F_{\fs}^Nf_{\fs\ft}F_{\ft}&=F_{\fs}^n(x_kf_{\fs\ft})\\
   &=F_{\fs}^n(\res_{\fs}(k)f_{\fs\ft}+\sum_{\fa\rhd\fs}r_{\fa}m_{\fs\ft})\\
   &=\res_{\fs}(k)\tilde{f}_{\fs\ft}.\end{aligned}$$
   Hence, $F_s\tilde{f}_{\fs\ft}=\tilde{f}_{\fs\ft}$. Furthermore, if $\fu\neq \fs$ then, by Lemma~\ref{Lem res s t}(ii),
   there exists an integer $k$ such that $\res_{\fu}(k)\neq \res_{\fs}(k)$. So
   $(x_k-\res_{\fu}(k))\tilde{f}_{\fs\ft}=0$ and $F_{\fu}\tilde{f}_{\fs\ft}=0$ because $x_k-\res_{\fu}(k)$ is
   a factor of $F_{\fu}$.

   Finally, note that there exist $r_{\fa}\in R$ such that $m_{\fs\ft}=\tilde{f}_{\fs\ft}+r_{\fa}\tilde{f}_{\fa\ft}$. So
   $f_{\fs\ft}=F_{\fs}m_{\fs\ft}F_{\ft}=F_{\fs}(\tilde{f}_{\fs\ft}+r_{\fa}\tilde{f}_{\fa\ft})F_{\ft}=\tilde{f}_{\fs\ft}$.
 Thus (ii) and (iii) are proved.

 (iv) By definition and (iii),

  \centerline{$\begin{aligned}
 f_{\fs\ft}f_{\fa\fb}=F_{\fs}m_{\fs\ft}F_{\ft}f_{\fa\fb}=\delta_{\fa\ft}F_{\fs}m_{\fs\ft}m_{\ft\fb}F_{\fb}
 =\delta_{\fa\ft}F_{\fs}\biggl(r_{\ft}m_{\fs\fb}+\sum_{(\fs', \fb')\rhd(\fs, \fb)}m_{\fs'\fb'}\biggr)F_{\fb}=\delta_{\fa\ft}r_{\ft}f_{\fs\fb}.
 \end{aligned}$}
\end{proof}

Now the ``Young's seminormal form" for Specht modules can be given as follows.

\begin{proposition}\label{Prop-s_i f_su}
Suppose that $\gl$ is a multipartition of $n$ and that $\fs$ and $\fu$ are standard
$\lambda$-tableaux. Let $\ft=\fs(i,i+1)$  for some integer $i$ with $1\le i<n$.

\begin{enumerate}
\item If $\ft$ is
standard then
\begin{align*}
s_if_{\fs\fu}&=\begin{cases}
    \displaystyle\frac{1}{\res_\ft(i)-\res_{\fs}(i)}f_{\fs\fu}
       +f_{\ft\fu},& \If \fs\rhd\ft,\\[5pt]
    \displaystyle\frac{1}{\res_\ft(i)-\res_{\fs}(i)}f_{\fs\fu}
    +\displaystyle\frac{(\res_\ft(i)-\res_\fs(i)-1)(\res_\ft(i)-\res_\fs(i)+1)}
         {(\res_\ft(i)-\res_{\fs}(i))^2}f_{\ft\fu},& \If \ft\rhd\fs.
   \end{cases}\end{align*}

\item If $\ft$ is not standard then
\begin{align*}s_if_{\fs\fu}&=\begin{cases}
  f_{\fs\fu}, &\text{if $i$ and $i+1$ are in the same row of $\fs$},\\
  -f_{\fs\fu}, &\text{if $i$ and $i+1$ are in the same column of $\fs$.}
\end{cases}\end{align*}\end{enumerate}
\label{Tf multiplication}
\end{proposition}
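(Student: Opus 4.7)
The plan is to exploit the simultaneous eigenspace structure of the $f$-basis for the Jucys--Murphy elements (Proposition~\ref{Prop-f_st properties}(iii)) together with the commutation identities of Lemma~\ref{Lem si xj}. First I would establish the exact reduction
$$s_i f_{\fs\fu} \in \mathrm{Span}_R\{f_{\fa\fu} : \fa \in \std(\gl)\}$$
in $\sh$ itself, not merely modulo $\sh^{\rhd\gl}$. Since $f_{\fs\fu} = f_{\fs\fu} F_\fu$ by Proposition~\ref{Prop-f_st properties}(ii), one has $s_i f_{\fs\fu} = (s_i f_{\fs\fu}) F_\fu$; expanding $s_i f_{\fs\fu}$ in the full $f$-basis of $\sh$ and multiplying on the right by $F_\fu$, I would verify that $F_\fb F_\fu = 0$ whenever $\fb \ne \fu$ as tableaux. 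Indeed, by Lemma~\ref{Lem res s t}(i) there is some $k$ with $\res_\fb(k) \ne \res_\fu(k)$, and a direct computation shows that the $x_k$-factor of $F_\fb F_\fu$ becomes divisible by $\prod_{c \in \mcr(k)}(x_k - c)$, which equals zero in $\sh$ since it is the minimal polynomial of $x_k$ under Assumption~\ref{ppoly}. Hence $f_{\fa\fb}^\mu F_\fu = 0$ unless $\fb = \fu$, giving the claimed reduction.

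Next I would apply $x_k$ for $k \ne i, i+1$ to $s_i f_{\fs\fu}$. By Lemma~\ref{Lem si xj}(ii), $s_i$ commutes with such $x_k$, so $x_k(s_i f_{\fs\fu}) = \res_\fs(k) \cdot s_i f_{\fs\fu}$. Writing $s_i f_{\fs\fu} = \sum_{\fa \in \std(\gl)} c_\fa f_{\fa\fu}$ from the previous step and using Proposition~\ref{Prop-f_st properties}(iii), $c_\fa \ne 0$ forces $\res_\fa(k) = \res_\fs(k)$ for every $k \ne i, i+1$. Lemma~\ref{Lem res s t}(ii) then restricts $\fa$ to $\{\fs, \fs(i,i+1)\} \cap \std(\gl)$, so $s_i f_{\fs\fu} = A f_{\fs\fu} + B f_{\ft\fu}$ with $B = 0$ if $\ft$ is not standard. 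The coefficient $A$ is determined by applying $x_i$: using $x_i s_i = s_i x_{i+1} - 1$ from Lemma~\ref{Lem si xj}(i) gives $x_i(s_i f_{\fs\fu}) = \res_\fs(i+1) \cdot s_i f_{\fs\fu} - f_{\fs\fu}$, and comparing the $f_{\fs\fu}$-coefficient on both sides yields $A = 1/(\res_\ft(i) - \res_\fs(i))$. Part (ii) is then immediate since $\res_\fs(i+1) - \res_\fs(i) = \pm 1$ according as $i, i+1$ lie in the same row or the same column of $\fs$.

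It remains to pin down $B$ in part (i). In the sub-case $\fs \rhd \ft$, Lemma~\ref{Lem Ehresmann} implies that $d(\ft) = d(\fs) s_i$ is a reduced expression, so $s_i d(\fs)^* = d(\ft)^*$ and consequently $s_i m_{\fs\fu} = m_{\ft\fu}$; combined with Proposition~\ref{Prop-f_st properties}(i) and the triangular change of basis from $\{m_{\fa\fb}\}$ to $\{f_{\fa\fb}\}$, this forces $B = 1$. For the sub-case $\ft \rhd \fs$, the first sub-case (already proved) applies instead to $s_i f_{\ft\fu}$, with $\ft$ now the dominant tableau, giving $s_i f_{\ft\fu} = -A f_{\ft\fu} + f_{\fs\fu}$. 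Imposing $s_i^2 f_{\fs\fu} = f_{\fs\fu}$ and expanding $s_i(A f_{\fs\fu} + B f_{\ft\fu})$, the $f_{\ft\fu}$-coefficient vanishes automatically, and the $f_{\fs\fu}$-coefficient yields $A^2 + B = 1$, so $B = 1 - 1/(\res_\ft(i) - \res_\fs(i))^2$, which factors as the stated ratio. The main obstacle is securing the exact equality in the first paragraph: it hinges on the vanishing $F_\fb F_\fu = 0$ for $\fb \ne \fu$, which in turn requires the explicit form of the minimal polynomial of $x_k$ granted by Assumption~\ref{ppoly}; once this is in hand, the rest is a clean residue-and-eigenvalue calculation.
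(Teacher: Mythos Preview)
Your proof is correct and follows essentially the paper's architecture: reduce to $\mathrm{Span}\{f_{\fa\fu}:\fa\in\std(\gl)\}$ via right multiplication by $F_\fu$, then to $\fa\in\{\fs,\ft\}$ via commutation with $x_k$ ($k\ne i,i+1$), and finally determine the two coefficients. One difference is worth flagging: you compute the diagonal coefficient $A=1/(\res_\ft(i)-\res_\fs(i))$ \emph{uniformly} from $x_is_i=s_ix_{i+1}-1$ before any case split, so part~(ii) becomes a one-line residue check ($\res_\fs(i+1)-\res_\fs(i)=\pm1$ exactly when $i,i+1$ are row- or column-adjacent in $\fs$). The paper instead treats the non-standard case by a direct $m$-basis computation of $s_im_{\fs\fu}$, which in the same-column sub-case requires a somewhat involved Bruhat-order straightening; your route is cleaner there. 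Your first-paragraph detour through $F_\fb F_\fu=0$ and the annihilating polynomial $\prod_{c\in\mcr(k)}(x_k-c)$ is correct and not circular (the vanishing already follows from Proposition~\ref{Prop-f_st properties}(iii) applied to the $f$-basis, independently of Corollary~\ref{Cor x_k F_k}), but it is unnecessary: Proposition~\ref{Prop-f_st properties}(ii) gives $f_{\fa\fb}F_\fu=\delta_{\fb\fu}f_{\fa\fb}$ directly, which is what the paper invokes. The determination of $B$ in the two standard sub-cases (via $s_im_{\fs\fu}=m_{\ft\fu}$ plus triangularity when $\fs\rhd\ft$, and via $s_i^2=1$ when $\ft\rhd\fs$) matches the paper's argument.
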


\begin{proof}
By Theorem~\ref{std basis} and Proposition~\ref{Prop-f_st properties}(i), $\{f_{\fs\fu}\}$ is a basis of $\sh$, so
$s_if_{\fs\fu}=\sum_{\fa,\fb}r_{\fa\fb}f_{\fa\fb}$ for some $r_{\fa\fb}\in R$.
By Proposition~\ref{Prop-f_st properties}(ii), $ f_{\fa\fb}F_\fu=\delta_{\fb\fu}f_{\fa\fu}$.
Therefore, multiplying the equation for $s_if_{\fs\fu}$ on the right by
$F_\fu$ shows that $r_{\fa\fb}=0$ whenever $\fa\neq\fu$; in particular,
$r_{\fa\fb}=0$ if $\shape(\fb)\ne\lambda$. Hence,
$s_if_{\fs\fu}=\sum_\fa r_\fa f_{\fa\fu}$, for some $r_\fa\in R$, where $\fa$
runs over the set of standard $\lambda$-tableaux.

Suppose that $k$ is an integer such that $k\neq i, i+1$.
Then, by Lemma~\ref{Lem si xj}(ii) and Proposition~\ref{Prop-f_st properties}(iii),
  \begin{align*}x_ks_if_{\fs\fu}=s_ix_kf_{\fs\fu}=\res_{\fs}(k)s_if_{\fs\fu}=\res_{\fs}(k)
  \sum_{\fa\in\std(\gl)}r_{\fa}f_{\fa\fu},\end{align*}
  On the other hand, by Proposition~\ref{Prop-f_st properties}(iii), we have
   \begin{align*}x_ks_if_{\fs\fu}=\sum_{\fa\in\std(\gl)}r_{\fa}x_kf_{\fa\fu}=\sum_{\fa\in\std(\gl)}
  r_{\fa}\res_{\fa}(k)f_{\fa\fu}.\end{align*}
  Equating coefficients, $r_{\fa}\res_{\fs}(k)=r_{\fa}\res_{\fa}(k)$ for all $k\neq i, i+1$,
   $\fa\in\std(\gl)$.
   Therefore, by Lemma~\ref{s=t or s=t(i i+1)}(ii), $r_{\fa}=0$ unless either $\fa=\fs$ or
    $\fa=\ft$ and $\ft$ is standard.

   Suppose that $\ft$ is not standard . Then we have shown $s_if_{\fs\fu}=r_{\fs}f_{\fs\fu}$
    for some $r_{\fs}\in R$.
   By Proposition~\ref{Prop-f_st properties}(i), $f_{\fs\fu}=m_{\fs\fu}+\sum_{(\fa, \fb)\rhd(\fs, \fu)}r_{\fa\fb}f_{\fa\fb}
   $ for some $r_{\fa\fb}\in R$.
   Because $\ft$ is not standard, either $i$ and $i+1$ are in the same row of $\ft$ or
    they are in the column. In the first case, by \cite[Lemma 1.1(iv)]{DJ:reps},
    $s_i\in S_n\cap S_{\gl}$ and $d(\fs)=s_id(\fs)s_i$. Therefore
    $s_im_{\fs\fu}=s_id(s)^*m_{\gl}d(\fu)=d(s)^*x_{\gl}u^{+}_{\gl}d(\fu)=m_{\fs\fu}$.

   In the second case, there is a unique standard tableau $\fc$ such that $\ft=\fs (i, i+1)=\fc s_jw $
   for some $j$ and $w\in S_n$ with $\ell(\ft)=\ell(\fc)+1+\ell(w)$, and if $\fb$ is any (standard) tableau with
   $\fb\rhd\fc s_j$ then $\fb\rhd\fc$. By construction, $\fs=\fc(s_jws_i)$ and $\ell(\fs)=\ell(\fc)+\ell(w)$. Therefore
   $\ell(w)=\ell(s_jws_i)$. Similarly, $\ell(ws_i)=\ell(w)+1=\ell(s_jw)$. Therefore $w=s_jws_i$,
      \begin{align*}&s_im_{\fs\fu}=s_id(s)^*m_{\gl}d(\fu)=w^*s_jd(\fc)^*m_{\gl}d(\fu)=w^*m_{\fc s_j}, \qquad\text{ and }\\
  &m_{\fc{s_j}\fu}=m_{\fc{s_j}\ft^{\gl}}d(\fu)^*=-m_{\fc{s_j}\fu}-
   \sum_{b\rhd \fc}m_{\fb\fu} \bmod \sh^{\rhd\gl}.\end{align*}
   Therefore
   \begin{align*}s_im_{\fs\fu}=w^*m_{\fc{s_j}\fu}&=
   -(s_iw^*s_j)m_{\fc{s_j}\fu}-\sum_{\fb\rhd\fc}s_iw^*s_jm_{\fb\fu}+h\quad
   \text{for some }h\in\sh m_{\mu}\sh \text{ and }\mu\rhd\gl\\
   &=-s_iw^*d(\fc)^*m_{\ft^{\gl}\fu}-\sum_{\fb\rhd\fc}s_iw^*s_jd(\fb)^*m_{\ft^{\gl}\fu}+h\\
   &=-m_{\fs\fu}+\sum_{\fb\rhd\fs}r_{\fb}m_{\fb\fu}.\end{align*}
   So $s_if_{\fs\fu}=-f_{\fs\fu}$ in the second case.

   Now suppose that $\ft=\fs(i,i+1)$ is standard, we have shown that
   $s_if_{\fs\fu}=r_{\fs}f_{\fs\fu}+r_{\ft}f_{\ft\fu}$ for some $r_{\fs}, r_{\ft} \in R$.
   First suppose that $\fs\rhd\ft$. Then $s_im_{\fs\fu}=m_{\ft\fu}$ for any $\fu\in\std(\gl)$ since $d(\ft)=d(\fs)s_i$.
   Therefore
    \begin{align*}
   s_if_{\fs\fu}&=s_i(m_{\fs\fu}+\sum_{(\fa, \fb)\rhd(\fs, \fu)}r_{\fa\fb}f_{\fa\fb})
   \text{ for some } r_{\fa\fb}\in R\\
   &=m_{\ft\fu}+\sum_{(\fa, \fb)\rhd(\fs, \fu)}r_{\fa\fb}s_if_{\fa\fb}\\
   &=m_{\ft\fu}+\sum_{(\fa, \fb)\rhd(\ft, \fu)}r_{\fa \fb}f_{\fa \fb}.
   \end{align*}
   Hence $r_{\ft}=1$, that is, $s_if_{\fs\fu}=r_{\fs}f_{\fs\fu}+f_{\ft\fu}$.
   Now, by Lemma~\ref{Lem si xj}(ii), we get
        \begin{align*}&x_{i+1}(s_if_{\fs\ft})=r_{\fs}x_{i+1}f_{\fs\fu}+x_{i+1}f_{\ft\fu}=
   r_{\fs}\res_{\fs}(i+1)f_{\fs\fu}+\res_{\ft}(i+1)f_{\ft\fu}, \text{ and }\\
    &(x_{i+1}s_i)f_{\fs\fu}=s_ix_if_{\fs\fu}+f_{\fs\fu}=
   (r_{\fs}\res_{\fs}(i)+1)f_{\fs\fu}+\res_{\fs}(i)f_{\ft\fu}.\end{align*}
Note that $\res_{\fs}(i)=\res_{\ft}(i+1)$ and  $\res_{\fs}(i+1)=\res_{\ft}(i)$,  we yield that
 $r_{\fs}=\frac{1}{\res_{\ft}(i)-\res_{\fs}(i)}$.

   Suppose that $\ft\rhd\fs$. Then $\ft\rhd\ft(i,i+1)=\fs$ and
   $s_if_{\ft\fu}=\frac{1}{\res_{\fs}(i)-\res_{\ft}(i)}f_{\ft\fu}+f_{\fs\fu}$ by the same argument as above. Thus
   \begin{align*}f_{\fs\fu}=s_i^2f_{\fs\fu}=s_i(r_{\fs}f_{\fs\fu}+r_{\ft}f_{\ft\fu})
   =(r_{\fs}^2+r_{\ft})f_{\fs\fu}+r_{\ft}(r_{\fs}-\frac{1}{\res_{\ft}(i)-\res_{\fs}(i)})f_{\ft\fu},\end{align*}
  which implies that $r_{\fs}=\displaystyle\frac{1}{\res_{\ft}(i)-\res_{\fs}(i)}$ and
   $r_{\ft}=\displaystyle\frac{(\res_\ft(i)-\res_\fs(i)-1)(\res_\ft(i)-\res_\fs(i)+1)}
         {(\res_\ft(i)-\res_{\fs}(i))^2}$.
 \end{proof}

Our next step is to construct an orthogonal basis of Specht modules with respect to the bilinear form $\la\,\ra$.
For each standard $\lambda$-tableau $\fs$ let
$f_\fs=f_{\fs\ft^{\gl}}+\sh^{\rhd\gl}$. We have the following facts.

\begin{corollary}\label{Cor orthogonal basis of Spechts}
 Assume that Assumption~\ref{ppoly} holds. Suppose that $\gl$ is an  $m$-multipartition of $n$.
\begin{enumerate}
\item Suppose that $\ft$ is a standard $\gl$-tableau.
\begin{enumerate}
\item There exist $r_{\fs}\in R$ such that $f_{\ft}=m_{\ft}+\sum_{\fs\rhd\ft}r_{\fs}m_{\fs}$.
\item Suppose that $k$ is an integer with $1\leq k\leq n$. Then $x_kf_{\ft}=\res_{\ft}(k)f_{\ft}$.
\item Suppose that $\fs$ is a standard $\gl$-tableau. Then $F_{\fs}f_{\ft}=\delta_{\fs\ft}f_{\ft}$.
\end{enumerate}
\item Suppose that $\fs$ and $\ft$ are standard
$\lambda$-tableaux.
\begin{enumerate}\item   If $\fs=\ft(i,i+1)\rhd\ft$ then
$f_\ft=(s_i-\alpha)f_\fs$, where
$\alpha=\frac{1}{\res_\ft(i)-\res_\fs(i)}$.
\item $\la f_{\fs}, f_{\ft}\ra=\delta_{\fs\ft}r_{\ft}$ for some $r_{\ft}\in R$.
\item $\{f_\fs|\fs\in\std(\lambda)\}$ is
an orthogonal basis of the Specht module $S^\lambda$.
\end{enumerate}
\end{enumerate}
\end{corollary}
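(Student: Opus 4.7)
The plan is to obtain all four assertions as routine consequences of Propositions~\ref{Prop-f_st properties} and \ref{Prop-s_i f_su} by specialising the right index of $f_{\fs\ft}$ to $\ft^{\gl}$ and passing to the quotient defining the Specht module $S^{\gl}=\sh m_{\gl}/(\sh m_{\gl}\cap \sh^{\rhd\gl})$; throughout I use that $f_{\fs}$ is by construction the image of $f_{\fs\ft^{\gl}}$ and $m_{\fs}$ the image of $m_{\fs\ft^{\gl}}$.

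For part (i), the key observation is that $\ft^{\gl}$ is the unique maximal element of $\std(\gl)$ with respect to $\unrhd$, so in Proposition~\ref{Prop-f_st properties}(i) the condition $(\fa,\fb)\rhd(\fs,\ft^{\gl})$ forces $\fb=\ft^{\gl}$ and $\fa\rhd\fs$. Together with parts (ii) and (iii) of that proposition specialised at $\ft=\ft^{\gl}$ and reduced modulo $\sh^{\rhd\gl}$, this yields (a), (b) and (c) respectively. Part (ii)(a) is then immediate from the first branch of Proposition~\ref{Prop-s_i f_su}(i): setting $\fu=\ft^{\gl}$ gives $s_i f_{\fs\ft^{\gl}}=\alpha f_{\fs\ft^{\gl}}+f_{\ft\ft^{\gl}}$ with $\alpha=1/(\res_\ft(i)-\res_\fs(i))$, and rearranging in $S^{\gl}$ yields $f_\ft=(s_i-\alpha)f_\fs$.

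The only step requiring a little care is (ii)(b). I would first record the reformulation
\[\la f_\fs, f_\ft\ra\, m_\gl \equiv f_{\ft^{\gl}\fs}\,f_{\ft\ft^{\gl}} \pmod{\sh^{\rhd\gl}}\]
of the cellular bilinear form in the $f$-basis. Since each $F_\fu$ is a polynomial in the $x_k$'s and hence fixed by the anti-automorphism $*$, one has $(f_{\fs\ft^{\gl}})^{*}=f_{\ft^{\gl}\fs}$; the displayed identity then follows by expanding $f_\fs$ and $f_\ft$ in the $m$-basis of $S^{\gl}$ via (i)(a) and applying the defining relation $m_{\ft^{\gl}\fs}\,m_{\ft\ft^{\gl}}\equiv\la m_\fs,m_\ft\ra\, m_\gl\pmod{\sh^{\rhd\gl}}$. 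Now Proposition~\ref{Prop-f_st properties}(iv) evaluates the right-hand side of the display as $\delta_{\fs\ft}\,r_\ft\,f_{\ft^{\gl}\ft^{\gl}}$, while Proposition~\ref{Prop-f_st properties}(i) applied at $\fs=\ft=\ft^{\gl}$ (where the dominance sum is empty) gives $f_{\ft^{\gl}\ft^{\gl}}\equiv m_\gl\pmod{\sh^{\rhd\gl}}$. Comparing yields $\la f_\fs,f_\ft\ra=\delta_{\fs\ft}\,r_\ft$.

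Finally, (ii)(c) combines (i)(a), which presents $\{f_\fs\mid\fs\in\std(\gl)\}$ as the image of $\{m_\fs\}$ under a unitriangular change of basis and hence as a basis of $S^{\gl}$, with the orthogonality supplied by (ii)(b). The only conceptual step in the whole argument is the reformulation of $\la\,,\,\ra$ displayed above; every other claim is a direct specialisation of results already proved.
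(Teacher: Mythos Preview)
Your proof is correct and follows essentially the same route as the paper's own argument: part (i) is obtained from Proposition~\ref{Prop-f_st properties} by specialising the second index to $\ft^{\gl}$ and reducing modulo $\sh^{\rhd\gl}$, part (ii)(a) from Proposition~\ref{Prop-s_i f_su}, and parts (ii)(b)--(c) from Proposition~\ref{Prop-f_st properties}(i) and (iv). The paper's proof is extremely terse (one sentence per item), whereas you make explicit the intermediate identity $\la f_\fs,f_\ft\ra\,m_\gl\equiv f_{\ft^{\gl}\fs}f_{\ft\ft^{\gl}}\pmod{\sh^{\rhd\gl}}$ and the r\^ole of the $*$-invariance of $F_\fu$; this is a helpful unpacking but not a different method.
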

\begin{proof}(i) follows directly by Proposition~\ref{Prop-f_st properties}.
Theorem~\ref{Prop-s_i f_su}(i) implies (ii.a).  Now, by
 Proposition~\ref{Prop-f_st properties}(i) and (iv), (ii.b) is proved. (ii.c) is proved by using
 Theorem~\ref{std basis} and Proposition~\ref{Prop-f_st properties}(i) and (iv).\end{proof}

The inner products $\la f_\fs,f_\ft\ra$, for $\fs,\ft\in\std(\lambda)$ will be
computed explicitly (as rational functions) in  the following.
To describe this we need some more notation.    Given
two nodes $x=(i,j,k)$ and $y=(a,b,c)$, write $y\prec x$, if either $c<k$, or
$c=k$ and $b>j$.

Suppose that $\gl$ is an $m$-multipartition of $n$ and that $\fs$ be a standard $\lambda$-tableau.
Then for each integer $i$ with $1\le i\le n$ there is a
unique node $x\in[\lambda]$ such that $\fs(x)=i$.  Let
$\mathscr A_\fs(i)$ be the set of addable nodes for the $m$-multipartition $\shape(\fs\!\downarrow\!i)$
which are strictly greater than $x$ (with respect to $\prec$); similarly, let
$\mathscr R_\fs(i)$ be the set of removable nodes which are strictly greater than
$x$ for the $m$-multipartition $\shape(\fs\!\downarrow\!i-1)$. If $y=(i, j, s)$  is either
an addable or a removable node, then we define its residue to be $\res(y)=j-i+q_s$.
Finally, if $\lambda$ is an
$m$-multipartition let
$\lambda!=\prod_{s=1}^m\prod_{i\geq1}\lambda^{s}_i!$.

\begin{example}Let $\gl=(3\cdot1; 1)$. Then $\sa_{\ft^{\gl}}(1)=\{(1,1,2)\}$, $\mathscr{R}_{\ft^{\gl}}(1)=\{\emptyset\}$; $\sa_{\ft^{\gl}}(2)\!=\!\{(2,1,1),(1,1,2)\}$, $\mathscr{R}_{\ft^{\gl}}(2)\!=\!\{(1,1,1)\}$; $\sa_{\ft^{\gl}}(3)\!=\!\{(2,1,1),(1,1,2)\}$, $\mathscr{R}_{\ft^{\gl}}(3)\!=\!\{(1,2,1)\}$; $\sa_{\ft^{\gl}}(4)\!=\!\{(1,1,2)\}$, $\mathscr{R}_{\ft^{\gl}}(4)=\{\emptyset\}$;  $\sa_{\ft^{\gl}}(5)=\{\emptyset\}=\mathscr{R}_{\ft^{\gl}}(5)$. It follows directly that
$$\displaystyle\prod_{i=1}^5
   \frac{\prod_{x\in\mathscr A_{\ft^{\gl}}(i)} (\res_{\ft^{\gl}}(i)-\res(x))}%
        {\prod_{y\in\mathscr R_{\ft^{\gl}}(i)} (\res_{\ft^{\gl}}(i)-\res(y))}=3!(-1+q_1-q_2)(q_1-q_2)(1+q_1-q_2)(2+q_1-q_2),$$ which equals exactly to  $\gma_{\ft^{\gl}}=\la f_{\ft^{\gl}}, f_{\ft^{\gl}}\ra$ determined by Theorem~\ref{gamma properties}(ii.a).
\end{example}

Now we can give a closed formula of $\gamma_{\ft}:=\la f_{\ft}, f_{\ft}\ra$ for any $m$-multipartition $\gl$ of
$n$ and any standard $\gl$-tableau $\ft$. For a moment, we write $i=(a, b,c)\in \ft$ if the integer $i$ with $1\le i\le n$ appears in the unique node $(a, b, c)$ of $[\gl]$ such that $\ft(a, b,c)=i$.
\begin{theorem}
Assume that Assumption~\ref{ppoly} holds. Suppose that $\gl$ is an $m$-multipartition of $n$.
\begin{enumerate}
\item Suppose that $\ft$ is a standard $\gl$-tableau. Then $\gamma_\ft$ is uniquely determined by the two conditions
\begin{enumerate}
\item $\gamma_{\ft^{\gl}}=
            \lambda!\displaystyle\prod_{1\le s<t\le m}
             \displaystyle\prod_{(i,j)\in[\lambda^{s}]}(j-i+q_s-q_t)$; and
\item if $\fs=\ft(i,i+1)\rhd\ft$ then
$\gamma_\ft=\displaystyle\frac{(\res_\ft(i)-\res_\fs(i)+1)(\res_\ft(i)-\res_\fs(i)-1)}
                {(\res_\ft(i)-\res_{\fs}(i))^2}\gamma_{\fs}$.
\end{enumerate}

\vspace{1\jot}
\item Let $\fs$ be a standard $\gl$-tableau. Then $\gamma_\fs=\displaystyle\prod_{i=1}^n
   \frac{\prod_{x\in\mathscr A_\fs(i)} (\res_\fs(i)-\res(x))}
        {\prod_{y\in\mathscr R_\fs(i)} (\res_\fs(i)-\res(y))}$.
\end{enumerate}
\label{gamma properties}
\end{theorem}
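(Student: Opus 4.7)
The plan is to establish part (i) first and then derive part (ii) by checking that the closed formula satisfies both conditions of (i).

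For condition (i.a), since $\ft^\gl$ is the unique maximum standard $\gl$-tableau in the dominance order, Proposition~\ref{Prop-f_st properties}(i) forces $f_{\ft^\gl\ft^\gl}=m_{\ft^\gl\ft^\gl}=m_\gl$, so $f_{\ft^\gl}=m_{\ft^\gl}$. The definition of the bilinear form then gives $\gamma_{\ft^\gl}m_\gl\equiv m_\gl^2\pmod{\sh^{\rhd\gl}}$. Since $u_\gl^+$ commutes with $RS_\gl$ by Lemma~\ref{Lem si xj}(v), one computes $m_\gl^2=x_\gl u_\gl^+ x_\gl u_\gl^+=x_\gl^2(u_\gl^+)^2=\gl!\,u_\gl^+ m_\gl$. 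Theorem~\ref{x_k action}(ii) together with the maximality of $\ft^\gl$ yields $x_km_\gl\equiv\res_{\ft^\gl}(k)m_\gl\pmod{\sh^{\rhd\gl}}$, hence $u_\gl^+ m_\gl\equiv\prod_{i=2}^m\prod_{k=1}^{a_i}(\res_{\ft^\gl}(k)-q_i)\,m_\gl$; since the entry $k\in[1,a_t]$ of $\ft^\gl$ lies in some earlier component $\gl^s$ with $s<t$ at a node $(i,j)$, this product reindexes to $\prod_{s<t}\prod_{(i,j)\in[\gl^s]}(j-i+q_s-q_t)$, as desired.

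For condition (i.b), Corollary~\ref{Cor orthogonal basis of Spechts}(ii.a) gives $f_\ft=(s_i-\alpha)f_\fs$ with $\alpha=1/(\res_\ft(i)-\res_\fs(i))$. Using associativity of $\la\,,\,\ra$ and $s_i^*=s_i$, I expand
\[
\gamma_\ft=\la f_\fs,(s_i-\alpha)^2 f_\fs\ra=(1+\alpha^2)\gamma_\fs-2\alpha\la f_\fs,s_if_\fs\ra.
\]
Proposition~\ref{Prop-s_i f_su} in the branch $\fs\rhd\ft$ gives $s_if_\fs=\alpha f_\fs+f_\ft$, so the orthogonality $\la f_\fs,f_\ft\ra=0$ (Corollary~\ref{Cor orthogonal basis of Spechts}(ii.b)) leaves $\gamma_\ft=(1-\alpha^2)\gamma_\fs$, which equals the stated ratio $(d-1)(d+1)/d^2$ with $d=\res_\ft(i)-\res_\fs(i)$. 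Uniqueness in (i) then follows because every standard $\gl$-tableau is reachable from $\ft^\gl$ by a chain of dominance-decreasing adjacent transpositions, allowing iterated application of (b) starting from (a).

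For part (ii), I would verify that the RHS satisfies both (i.a) and (i.b). For (i.b), I would exploit that $\fs$ and $\ft=\fs(i,i+1)$ agree on entries $k\neq i,i+1$, so $\shape(\fs\!\downarrow\!k)=\shape(\ft\!\downarrow\!k)$ for all $k\neq i$ and the node containing $k$ is identical in $\fs,\ft$ for $k\neq i,i+1$. Consequently, the factors at indices $k\notin\{i,i+1\}$ cancel between $\gamma_\ft$ and $\gamma_\fs$, and one need only compare the two factors at $k=i$ and the two at $k=i+1$. For the base case (i.a) at $\ft^\gl$, I would unfold $\prod_{i=1}^n$ entry by entry: for $i$ at node $(p,r,s)$ of $\ft^\gl$, the sets $\sa_{\ft^\gl}(i)$ and $\mcr_{\ft^\gl}(i)$ are explicitly enumerated (nodes $\prec$-strictly greater, i.e.\ in later components or earlier rows of component $s$), and the resulting product telescopes to $\gl!\prod_{1\le s<t\le m}\prod_{(i,j)\in[\gl^s]}(j-i+q_s-q_t)$.

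The main obstacle is the combinatorial bookkeeping in verifying (i.b) for the closed formula: one must identify precisely which addable and removable nodes lie in the symmetric differences $\sa_\fs(i)\triangle\sa_\ft(i)$ and $\mcr_\fs(i)\triangle\mcr_\ft(i)$ (and analogously for $i+1$), keeping track of the $\prec$-order, and then confirm that the near-total cancellation leaves exactly the factors $(\beta-\alpha-1)$ and $(\beta-\alpha+1)$ in the numerator and $(\beta-\alpha)^2$ in the denominator, where $\alpha=\res_\fs(i)=\res_\ft(i+1)$ and $\beta=\res_\fs(i+1)=\res_\ft(i)$. The verification of (i.a) for the closed formula is analogous but routine.
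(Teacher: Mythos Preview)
Your proposal is correct and follows essentially the same route as the paper. For (i) both you and the paper compute $m_\gl^2$ via $x_\gl^2=\gl!\,x_\gl$ and Theorem~\ref{x_k action}, and obtain the recursion $(1-\alpha^2)$ from the seminormal form together with the associativity $\la hx,y\ra=\la x,h^*y\ra$; for (ii) both verify that the closed formula satisfies (i.a) and (i.b), reducing (i.b) to the factors at indices $i,i+1$. The only difference is that the paper writes out the verification of (i.b) for the closed formula explicitly, splitting into the two cases ``$i$ and $i+1$ lie in the same component'' versus ``$i$ and $i+1$ lie in adjacent nonempty components,'' and listing the relevant addable/removable nodes in each case---precisely the bookkeeping you flag as the main obstacle.
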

\begin{proof}
(i.a) By definition,
\begin{align*}f_{\ft^{\gl}}f_{\ft^{\gl}}=
(F_{\ft^{\gl}}m_{\gl}F_{\ft^{\gl}})(F_{\ft^{\gl}}m_{\gl}F_{\ft^{\gl}})=m_{\gl}m_{\gl}&=\gl!\prod_{t=2}^m\prod_{k=1}^{a_t}(x_k-q_l)m_{\gl}\\
&=\gl!\prod_{t=2}^m\prod_{k=1}^{a_t}(\res_{\ft^{\gl}}(k)-q_t)m_{\gl}\\
&=\gl!\prod_{t=2}^m\prod_{\substack{s<t\\(i, j)\in[\gl^s]}}(j-i+q_s-q_t)m_{\gl}\\
&=\gamma_{\ft}m_{\gl}.\end{align*}
So (ii.a) is proved.

Suppose that $\fs=\ft(i,i+1)\rhd\ft$ and let $\alpha=\frac{1}{\res_\ft(i)-\res_\fs(i)}$.
 Applying Corollary~\ref{Cor orthogonal basis of Spechts}(ii), $f_\ft=(s_i-\alpha)f_\fs$, moreover  $\la s_if_{\fs}, f_{\fs}\ra=\ga\gma_{\fs}$ and $\la s_if_{\fs},s_if_{\fs}\ra=\gma_{\fs}$ since $\la f_{\fs}, f_{\ft}\ra=0$. Thus
\begin{align*}\gamma_{\ft}=\la s_if_{\fs}, s_if_{\fs}\ra-2\ga\la s_if_{\fs}, f_{\fs}\ra+\ga^2\gma_{\fs}
 =(1-\ga^2)\gma_{\fs}.\end{align*}
So (i.b) is proved.

 (ii) We proceed by induction on $\ell(\fs)$ for standard $\gl$-tableaux $\fs$.  First, let $\fs=\ft^{\gl}$ and assume that the integer $i=(a, b,c)$ with $1\le i\le n$, that is $a_c<i\le a_{c+1}$. First consider the contribution that the addable and removable nodes in $[\gl^c]$ make to $\ft^{\gl}$. By definition, these nodes occur in pairs $(x, y)=((a+1,1,c), (a, b-1,c)$. Therefore
  $$\frac{\prod_{x\in\sa_{\ft^{\gl}}(i)\cap[\gl^c]}(\res_{\ft^\gl}(i)-\res(x))}
  {\prod_{y\in\mathscr{R}_{\ft^{\gl}}(i)\cap[\gl^c]}(\res_{\ft^\gl}(i)-\res(y))}=b \quad\text{ and }\quad\prod_{a_c+1\le i\le a_{c+1}}\frac{\prod_{x\in\sa_{\ft^{\gl}}(i)\cap[\gl^c]}(\res_{\ft^\gl}(i)-\res(x))}
  {\prod_{y\in\mathscr{R}_{\ft^{\gl}}(i)\cap[\gl^c]}(\res_{\ft^\gl}(i)-\res(y))}=\gl^c!.$$
  Next, consider the contribution that addable and removable nodes in some $t$-component of $[\gl]$ with $t>c$ (there are no such node for $t<c$). In this case, we have $\mathscr{A}_{\ft^{\gl}}(i)=\{(1, 1, c+1), \cdots, (1, 1, m)\}$ and $\mathscr{R}_{\ft^{\gl}}(i)=\{\emptyset\}$. Therefore
  \begin{align*}\prod_{a_c+1\le i\le a_{c+1}}\frac{\prod_{x\in\sa_{\ft^{\gl}}(i)\backslash[\gl^c]}(\res_{\ft^\gl}(i)-\res(x))}
  {\prod_{y\in\mathscr{R}_{\ft^{\gl}}(i)\backslash[\gl^c]}(\res_{\ft^\gl}(i)-\res(y))}=\prod_{(i, j)\in[\gl^c]}\prod_{t=c+1}^m(j-i+q_c-q_t).\end{align*}
  Hence
  \begin{align*}\prod_{i=1}^n
   \frac{\prod_{x\in\mathscr A_{\ft^{\gl}}(i)} (\res_{\ft^{\gl}}(i)-\res(x))}
        {\prod_{y\in\mathscr R_{\ft^{\gl}}(i)} (\res_{\ft^{\gl}}(i)-\res(y))}=\gl!\prod_{1\le s<t\le m}\prod_{(i, j)\in[\gl^c]}(j-i+q_s-q_t)=\gma_{\ft^{\gl}}.\end{align*}
\indent Now assume that $\ft^{\gl}\rhd\fs$ and there exists an integer $k$ with $1\le k<n$ such that $\ft^{\gl}=\fs(k,k+1)$.
 let $\ga=\frac{1}{\res_{\fs}(k)-\res_{\ft^{\gl}}(k)}$ and let
\begin{align*}L:=\prod_{i=k, k+1}\frac{\prod_{x\in\mathscr{A}_{\fs}(i)} (\res_{\fs}(i)-\res(x))}
        {\prod_{y\in\mathscr R_{\fs}(i)} (\res_{\fs}(i)-\res(y))} \quad\text{ and }\quad R:=\prod_{i=k, k+1}
        \frac{\prod_{x\in\mathscr{A}_{\ft^{\gl}}(i)} (\res_{\ft^{\gl}}(i)-\res(x))}
        {\prod_{y\in\mathscr R_{\ft^{\gl}}(i)} (\res_{\ft^{\gl}}(i)-\res(y))}.\end{align*}
Since $\mathscr{A}_{\fs}(i)=\mathscr A_{\ft^{\gl}}(i)$ and $\mathscr{R}_{\fs}(i)=\mathscr{R}_{\ft^{\gl}}(i)$ for all $i\neq k, k+1$.  Therefore, by (ii.b) and Assumption~\ref{ppoly}, we only need to show that
$L=(\ga^2-1)R$.

Note that either both $k$ and $k+1$ appear in the same component of $\fs$, say the $c$-component of $\fs$, or $k$ and $k+1$ appear in the different components of $\fs$. In the first case, we have that $a_c<k, k+1\le a_{c+1}$, moreover, $k=(a+1,1,c)\in\fs$ and $k+1=(a,\gl^c_a,c)\in\fs$ for some $a$ with $\gl^c_a>1$ and $a<\ell(\gl^c)$. Now by definition,
\begin{align*}&\sa_{\fs}(k)=\{(1,1,c+1), \dots, (1, 1, m)\},\quad\quad\sr_{\fs}(k)=\{\emptyset\};\\ &\sa_{\fs}(k+1)=\{(a+1, 2,c), (a+2, 1, c),(1,1,c+1), \dots, (1, 1, m)\},\\& \sr_{\fs}(k+1)=\{(a+1, 1, c), (a, \gl^c_a-1, c)\}.\end{align*}
On the other hand, since $k=(a, \gl^c_a, c)\in\ft^{\gl}$ and $k=(a+1, 1, c)\in\ft^{\gl}$,
\begin{align*}&\sa_{\ft^{\gl}}(k)=\{(a+1,1, c), (1,1,c+1), \dots, (1, 1, m)\},\quad\quad\sr_{\ft^{\gl}}(k)=\{(a,\gl^c_a-1, c)\};\\
&\sa_{\ft^{\gl}}(k+1)=\{(1,1,c+1), \dots, (1, 1, m)\}, \quad\quad\quad\quad\quad\,\sr_{\fs}(k+1)=\{\emptyset\}.\end{align*}
Hence $\ga^2-1=\frac{(\gl^c_a)^2-1}{(\gl^c_a)^2}$ and  \begin{align*}&L=\frac{(\gl^c_a)^2-1}{\gl^c_a}\prod_{j=1}^{m-c}(-a+q_c-q_{c+j})(\gl^c_a-a+q_c-q_{c+j}),\\
&R=\gl^c_a\prod_{j=1}^{m-c}(-a+q_c-q_{c+j})(\gl^c_a-a+q_c-q_{c+j}).\end{align*}
It follows directly that $L=(\ga^2-1)R$.

In the second case, clearly $k=a_{c+1}$ for some $c$ with $1\le c<m$. Therefore $k+1=(a,\gl^c_a,c)\in\fs$ where $a=\ell(\gl^c)$ and  $k=(1, 1,d)\in\fs$ where $d=\min\{m\geq d>c\mid \gl^{d}\neq\emptyset\}$.  Then $\res_{\fs}(k)=q_d$, $\res_{\fs}(k+1)=\gl^c_a-a+q_c=\res_{\ft^{\gl}}(k)$, and
$$\begin{aligned}&\sa_{\fs}(k)=\{(1,1,d+1), \dots, (1, 1, m)\}, \quad\quad\sr_{\fs}(k)=\{\emptyset\}; \\
&\sa_{\fs}(k+1)=\{(a+1, 1,c), (1, 1, c+1), \dots, (1, 1, d-1),  (1, 2, d),(2,1,d), (1, 1, d+1),  \dots, (1, 1, m)\},\\
&\sr_{\fs}(k+1)=\{(a, \gl^c_a-1, c), (1, 1, d)\}.\end{aligned}$$
On the other hand,   since $k=(a, \gl^c_a, c)\in\ft^{\gl}$ and $k=(1, 1, d)\in\ft^{\gl}$,
$$\begin{aligned}&\sa_{\ft^{\gl}}(k)=\{(a+1,1, c), (1,1,c+1), \dots, (1, 1, m)\}, \quad\quad\sr_{\ft^{\gl}}(k)=\{(a,\gl^c_a-1, c)\};\\
&\sa_{\ft^{\gl}}(k+1)=\{(1,1,d+1), \dots, (1, 1, m)\},\quad\quad\quad\quad\,\quad\sr_{\fs}(k+1)=\{\emptyset\}.\end{aligned}$$
Therefore \begin{align*}&\ga^2-1=\frac{\gl^c_a(\gl^c_a-a+1+q_c-q_d)(\gl^c_a-a-1+q_c-q_{d})}{(\gl^c_a-a+q_c-q_{d})^2},\\
&L=\frac{\gl^c_a(\gl^c_a-a+1+q_c-q_d)(\gl^c_a-a-1+q_c-q_{d})}{\gl^c_a-a+q_c-q_{d}}
\prod_{j=1}^{m-d}(q_d-q_{d+j})(\gl^c_a-a+q_c-q_{d+j}),\\
&R=\gl^c_a\prod_{j=0}^{m-d}(\gl^c_a-a+q_c-q_{d+j})\prod_{j=1}^{m-d}(q_d-q_{d+j}).\end{align*}
It follows directly that $L=(\ga^2-1)R$ and we complete the proof by induction argument.
  \end{proof}

\begin{remark} The assertion (i) has given in \cite[LEMMA 6.10]{AMR} with a skeleton outline of proof. The assertion (ii) answers a question of Ariki-Mathas-Rui \cite[\S6.9]{AMR}.
\end{remark}

Let $(,)$ be the inner product on $\sh$ given by
$(h_1,h_2)=\tau(h_1h_2^*)$, for $h_1,h_2\in\sh$. It follows form Theorem~\ref{Thm-def tau} and
\S\ref{*-anti-auto} that the trace form $\tau$ satisfying
that $\tau(h)=\tau(h^*)$ for all $h\in \sh$. Therefore $(\,,)$ is a
symmetric bilinear form on $\sh$. Furthermore, $(\,,)$ is associative in the sense that
$(ab, c)=(a, cb^*)$ for all $a, b, c\in\sh$.

\begin{theorem}[\cite{AMR}, PROPOSITION 6.8]
Assume that Assumption~\ref{ppoly} holds.
\noindent\begin{enumerate}\item If $\fs,\ft,\fa$ and $\fb$ are standard
tableaux then $f_{\fs\ft}f_{\fa\fb}=\delta_{\fa\ft}\gamma_\ft f_{\fs\fb}$.

\item $\{f_{\fs\ft}|\fs,\ft\in\std(\lambda),\lambda\in\mpn\}$
is a orthogonal basis of $\sh$ with respect to the trace
form $\tau$. \end{enumerate}
\label{Them orthogonal basis}
\end{theorem}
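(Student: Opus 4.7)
The plan is to establish (i) by identifying the unknown scalar from Proposition~\ref{Prop-f_st properties}(iv) with $\gamma_\ft$, and then to deduce (ii) from (i) together with the cyclic property of $\tau$.

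For (i), Proposition~\ref{Prop-f_st properties}(iv) already gives $f_{\fs\ft}f_{\fa\fb} = \delta_{\fa\ft}\,r_\ft\,f_{\fs\fb}$ for some $r_\ft \in R$ depending only on $\ft$, so the only task is to prove $r_\ft = \gamma_\ft$. I specialise to $\fs = \fb = \ft^\gl$ and compare the coefficients of $m_\gl$ modulo $\sh^{\rhd\gl}$. Since $\ft^\gl$ is the unique maximal standard $\gl$-tableau in the dominance order, Proposition~\ref{Prop-f_st properties}(i) yields $f_{\ft^\gl\ft^\gl} \equiv m_\gl \bmod \sh^{\rhd\gl}$, so the right-hand side $r_\ft f_{\ft^\gl\ft^\gl}$ contributes exactly $r_\ft$ to the $m_\gl$-coefficient. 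For the left-hand side, applying the $*$-antiautomorphism of \S\ref{*-anti-auto} to the Specht-module expansions in Corollary~\ref{Cor orthogonal basis of Spechts}(i.a) transfers
$$f_{\ft^\gl\fs} \equiv m_{\ft^\gl\fs} + \sum_{\fc\rhd\fs} r_\fc\, m_{\ft^\gl\fc},\qquad f_{\ft\ft^\gl} \equiv m_{\ft\ft^\gl} + \sum_{\fd\rhd\ft} r'_\fd\, m_{\fd\ft^\gl} \bmod \sh^{\rhd\gl},$$
with the same coefficients as in $f_\fs = m_\fs + \sum_{\fc\rhd\fs} r_\fc m_\fc$ and $f_\ft = m_\ft + \sum_{\fd\rhd\ft} r'_\fd m_\fd$. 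Multiplying these expansions out and applying the cellular bilinear-form identity $m_{\ft^\gl\fu} m_{\fv\ft^\gl} \equiv \la m_\fu, m_\fv\ra\, m_\gl \bmod \sh^{\rhd\gl}$ term-by-term, $R$-bilinearity of $\la\,,\,\ra$ collapses the product into $\la f_\fs, f_\ft\ra m_\gl = \delta_{\fs\ft}\gamma_\ft m_\gl$ by Corollary~\ref{Cor orthogonal basis of Spechts}(ii.b). Taking $\fs = \ft$ then forces $r_\ft = \gamma_\ft$.

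For (ii), first observe that $\{f_{\fs\ft}\}$ is an $R$-basis of $\sh$ by unitriangular change of basis from the cellular basis $\{m_{\fs\ft}\}$ (Theorem~\ref{std basis} together with Proposition~\ref{Prop-f_st properties}(i)). For orthogonality, I expand $f_{\fs\ft} f_{\fa\fb}$ in the $f$-basis and act by $F_\fs$ on the left and by $F_\fb$ on the right; Proposition~\ref{Prop-f_st properties}(ii) forces all coefficients to vanish except (possibly) that of $f_{\fs\fb}$, so $f_{\fs\ft}f_{\fa\fb} = 0$ whenever $\shape(\fs) \ne \shape(\fb)$, while part (i) gives $f_{\fs\ft}f_{\fa\fb} = \delta_{\fa\ft}\gamma_\ft f_{\fs\fb}$ in the equal-shape case. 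Thus $\tau(f_{\fs\ft}f_{\fa\fb}) = \delta_{\fa\ft}\gamma_\ft\,\tau(f_{\fs\fb})$. The vanishing $\tau(f_{\fs\fb}) = 0$ for $\fs \ne \fb$ follows from the cyclic identity $\tau(f_{\fs\ft}f_{\ft\fb}) = \tau(f_{\ft\fb}f_{\fs\ft})$: applying part (i) to both sides yields $\gamma_\ft\tau(f_{\fs\fb}) = \delta_{\fs\fb}\gamma_\fb\tau(f_{\ft\ft})$, and invertibility of $\gamma_\ft$ under Assumption~\ref{ppoly} (via the explicit formula in Theorem~\ref{gamma properties}(ii)) then yields the claim.

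The main obstacle is the key identity $f_{\ft^\gl\fs}f_{\ft\ft^\gl} \equiv \la f_\fs, f_\ft\ra m_\gl \bmod \sh^{\rhd\gl}$ in step~(i). The delicate point is recognising that the $*$-antiautomorphism transfers the Corollary~\ref{Cor orthogonal basis of Spechts}(i.a) coefficients from the expansion of $f_{\fs\ft^\gl}$ to that of $f_{\ft^\gl\fs}$, so that when the two products are multiplied out the cellular bilinear-form identity telescopes the result into a single application of the form $\la f_\fs, f_\ft\ra$. Once this bookkeeping is in place, the rest of the argument is routine cellular-algebra manipulation built on the machinery of Sections~\ref{Sec: def of DCA} and \ref{Sec: Cellular basis}.
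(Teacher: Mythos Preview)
Your proof is correct and follows essentially the same route as the paper: both establish $f_{\ft^\gl\ft}f_{\ft\ft^\gl}\equiv\gamma_\ft m_\gl\bmod\sh^{\rhd\gl}$ via the Specht-module bilinear form for part~(i), and then exploit the trace property $\tau(xy)=\tau(yx)$ together with the invertibility of $\gamma_\ft$ for part~(ii). The only minor difference is in extending (i) from the special case $\fs=\fb=\ft^\gl$ to arbitrary $\fs,\fb$: you invoke the independence of $r_\ft$ from $\fs,\fb$ asserted in Proposition~\ref{Prop-f_st properties}(iv), whereas the paper instead carries out the direct calculation $f_{\fs\ft}f_{\ft\fb}=F_\fs d(\fs)^*\bigl(f_{\ft^\gl\ft}f_{\ft\ft^\gl}\bigr)d(\fb)F_\fb=\gamma_\ft F_\fs d(\fs)^* m_\gl d(\fb)F_\fb=\gamma_\ft f_{\fs\fb}$.
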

\begin{proof} By proposition~\ref{Prop-f_st properties}(iv), $f_{\fs\ft}f_{\fa\fb}=
\gd_{\fa\ft}r^{\ft}_{\fs\fb}f_{\fs\fb}$  for
some $r^{\ft}_{\fs\fb}\in R$. So we only need to show that $r^{\ft}_{\fs\fb}
=\gma_{\ft}$ for all standard tableaux $\fs, \fb$. Note that $f_{\ft}=f_{\ft\ft^{\gl}}+\sh^{\rhd\gl}$
 and $\gma_{\ft}=\la f_{\ft}, f_{\ft}\ra$. Therefore, the inner product $\la, \ra$ on the Specht module $S^\gl$
 gives $\gma_{\ft}m_{\gl}=\la f_{\ft}, f_{\ft}\ra m_{\gl}\equiv f_{\ft^{\gl}\ft}f_{\ft\ft^{\gl}}\bmod \sh^{\rhd\gl}$.
 Hence $f_{\ft^{\gl}\ft}f_{\ft\ft^{\gl}}=\gma_{\ft}f_{\ft^{\gl}\ft^{\gl}}$ since $f_{\ft^{\gl}\ft^{\gl}}=m_{\gl}$.
 Thus $$\begin{aligned}f_{\fs\ft}f_{\fa\fb}&=\gd_{\fa\ft}F_{\fs}d(\fs)^*m_{\ft^{\gl}\ft}F_{\ft}F_{\ft}m_{\ft\ft^{\gl}}d(\ft)F_{\fb}\\
 &=\gd_{\fa\ft}F_{\fs}d(\fs)^*f_{\ft^{\gl}\ft}f_{\ft\ft^{\gl}}d(\ft)F_{\fb}\\
 &=\gd_{\fa\ft}\gma_{\ft}F_{\fs}d(\fs)^*m_{\gl}d(\fb)F_{\fb}\\
 &=\gd_{\fa\ft}\gma_{\ft}f_{\fs\fb}.\end{aligned}$$
(i) is proved.

By Theorem~\ref{std basis} and Proposition~\ref{Prop-f_st properties}(i),
$\{f_{\fs\ft}|\fa,\ft\in \std(\gl), \gl\in\mpn\}$ is a basis of $\sh$. Now, by (i),
$(f_{\fs\ft}, f_{\fa\fb})=\tau(f_{\fs\ft}f_{\fa\fb}^*)=\tau(f_{\fs\ft}f_{\fb\fa})=
\gd_{\ft\fb}\gma_{\ft}\tau(f_{\fs\fa})$. On the other hand, $\tau$ is a trace form, so we also have that
$\tau(f_{\fs\ft}f_{\fb\fa})=\tau(f_{\fb\fa}f_{\fs\ft})=\gd_{\fa\fs}\gma_{\fs}\tau(f_{\fb\ft})$.
Thus $(f_{\fs\ft}, f_{\fa\fb})=
\gd_{\ft\fb}\gma_{\ft}\tau(f_{\fs\fa})=\gd_{\fa\fs}\gma_{\fs}\tau(f_{\fb\ft})=
\gd_{\ft\fb}\gd_{\fa\fs}\gma_{\ft}\tau(f_{\ft\ft})$. Consequently,
$\{f_{\fs\ft}|\fs,\ft\in\std(\lambda)\for \lambda\in\mpn\}$
is a orthogonal basis of $\sh$ with respect to the trace
form $\tau$ and $\tau(f_{\ft\ft})\neq 0$ for all standard tableaux $\ft$. Indeed, if there is a standard
 tableau $\ft$ such that $\tau(f_{\ft\ft})=0$. Then $\gma_{\ft}\tau(f_{\ft\ft})=\gma_{\fs}\tau(f_{\fs\fs})$ and
 $\gma_{\ft}\neq 0$ implies that $\tau_{f_{\fs\fs}}=0$ for all standard tableau $\fs$ since $\gma_{\fs}\neq 0$
 for all standard tableaux $\fs$. It is impossible since $\tau$ is non-degenerate. We complete the proof.
 \end{proof}

\begin{remark}
In \cite[Theorem A2]{BK-Schur-Weyl} it was shown that $\sh$ is a symmetric algebra with respect to the trace
form $\tau$ for all parameters $q_1, \dots, q_m$; however, this was proved indirectly without constructing
a pair of dual bases. The Theorem gives a self-dual basis of the
semisimple degenerate cyclotomic Hecke algebras. In general,  no such basis is known in general.
\end{remark}

Now we may identify  the Specht module $S^\lambda$ with a
submodule of $\sh$ up to isomorphism.

\begin{corollary}
Suppose that $\fs$ and
$\ft$ be standard $\lambda$-tableaux. Then $S^\lambda\cong
\sh f_{\fs\ft}=\sum_{\fa\in\std(\lambda)}Rf_{\fa\ft}$.
\label{S=fH}
\end{corollary}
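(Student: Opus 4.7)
The plan is to first pin down $\sh f_{\fs\ft}$ as a concrete left ideal using the matrix-unit structure established in Theorem~\ref{Them orthogonal basis}, and then write down an explicit $\sh$-module isomorphism to $S^\lambda$ via the orthogonal bases on both sides.

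First I would prove the equality $\sh f_{\fs\ft} = \sum_{\fa\in\std(\lambda)} R f_{\fa\ft}$. Theorem~\ref{Them orthogonal basis} supplies the $R$-basis $\{f_{\fc\fb}\}$ of $\sh$ (with $\fc,\fb$ ranging over standard tableaux of a common shape $\mu\in\mpn$), and part (i) of that theorem gives the matrix-unit relation $f_{\fc\fb}f_{\fs\ft} = \delta_{\fb\fs}\gamma_\fs f_{\fc\ft}$. The Kronecker delta annihilates all basis vectors except those with $\fb=\fs$, which forces $\shape(\fc) = \lambda$. Since $\gamma_\fs$ is a unit in $R$ under Assumption~\ref{ppoly}, the left ideal $\sh f_{\fs\ft}$ then coincides with the $R$-span of $\{f_{\fa\ft} : \fa\in\std(\lambda)\}$, and these vectors are $R$-linearly independent (being a subfamily of the basis of Theorem~\ref{Them orthogonal basis} times a unit).

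Second, I would define the $R$-linear map $\psi : S^\lambda \to \sh f_{\fs\ft}$ by $\psi(f_\fa) = f_{\fa\ft}$, where $\{f_\fa : \fa\in\std(\lambda)\}$ is the orthogonal basis of $S^\lambda$ furnished by Corollary~\ref{Cor orthogonal basis of Spechts}(ii)(c). Since $\psi$ carries a basis to a basis, it is automatically an $R$-module isomorphism, so all that remains is to show it is $\sh$-linear, which I would do by checking it on the generators $x_k$ and $s_i$.

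For the Jucys--Murphy part, Proposition~\ref{Prop-f_st properties}(iii) gives $x_k f_{\fa\ft} = \res_\fa(k) f_{\fa\ft}$, which matches the eigenvalue $x_k f_\fa = \res_\fa(k) f_\fa$ of Corollary~\ref{Cor orthogonal basis of Spechts}(i)(b). For the braid generators, Proposition~\ref{Prop-s_i f_su} expresses $s_i f_{\fa\ft}$ as a linear combination of $f_{\fa\ft}$ and $f_{\fa(i,i+1)\ft}$ with coefficients depending only on the first index; read modulo $\sh^{\rhd\lambda}$, the identical formulas compute $s_i f_\fa$ in $S^\lambda$. The only mild wrinkle, which is the main (minor) obstacle, is the non-standard case of Proposition~\ref{Prop-s_i f_su}(ii): but the $\pm 1$ coefficient there depends only on whether $i$ and $i+1$ share a row or a column of $\fa$, a feature that is preserved upon passage to the quotient defining $S^\lambda$. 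Consequently $\psi$ commutes with every generator of $\sh$, establishing the desired isomorphism.
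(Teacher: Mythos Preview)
Your proof is correct and follows essentially the same route as the paper: both establish $\sh f_{\fs\ft}=\sum_{\fa}Rf_{\fa\ft}$ from the matrix-unit relations of Theorem~\ref{Them orthogonal basis} and then define the isomorphism by $f_\fa\mapsto f_{\fa\ft}$. The paper simply asserts this map is an $\sh$-isomorphism, whereas you supply the verification on generators via Proposition~\ref{Prop-s_i f_su} and Corollary~\ref{Cor orthogonal basis of Spechts}; this extra care is fine but not a genuinely different approach.
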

\begin{proof}
The Theorem implies that$\{f_{\fa\ft}\mid \fa\in\std(\gl)\}$ is a basis of $\sh f_{\fs\ft}$.
On the other hand, by \S\ref{Def Specht} and Proposition~\ref{Prop-f_st properties}(i), $\{f_{\fa}\mid\fa\in
\std(\gl)\}$ is a basis of the Specht module $S^{\gl}$. Now the $R$-linear map $f_{\fa}\mapsto f_{\fa\ft}$
 gives the desired  isomorphism.
\end{proof}

Let $\mathcal{G}(\gl)=\det(\la m_{\fs}, m_{\ft}\ra)$,
for $\fs, \ft\in\std(\gl)$,  be the Gram determinant of this form, which
is well-defined up to a unit in $R$. As an application of the closed formula for $\gma_{\fs}$, we obtain a closed formula for the Gram determinant, which is differential but equivalent to that one given in \cite[COROLLARY 6.9]{AMR}.

\begin{corollary}Suppose that $R$ is a field and that Assumption~\ref{ppoly} holds. Let $\gl$ be an $m$-multipartition of $n$. Then \begin{align*}\mathcal{G}(\gl)=\prod_{\ft\in\std(\gl)}\gma_{\ft}=\prod_{\ft\in\std(\gl)}\prod_{i=1}^n
   \frac{\prod_{x\in\mathscr A_\ft(i)} (\res_\ft(i)-\res(x))}
        {\prod_{y\in\mathscr R_\ft(i)} (\res_\ft(i)-\res(y))}\end{align*}
\end{corollary}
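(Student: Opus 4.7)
The plan is to exploit the two bases of the Specht module $S^{\gl}$ that we have in hand: the original cellular basis $\{m_{\ft}\mid \ft\in\std(\gl)\}$ and the orthogonal basis $\{f_{\ft}\mid \ft\in\std(\gl)\}$ from Corollary~\ref{Cor orthogonal basis of Spechts}. The Gram determinant $\mathcal{G}(\gl)$ is defined (up to a unit) using the $m_{\ft}$ basis, but it is far easier to compute it in the $f_{\ft}$ basis, where the form is diagonal.

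First I would recall from Corollary~\ref{Cor orthogonal basis of Spechts}(i.a) that
$$f_{\ft}=m_{\ft}+\sum_{\fs\rhd\ft}r_{\fs}m_{\fs}$$
for some $r_{\fs}\in R$. Fix any total refinement $>$ of the dominance order $\unrhd$ on $\std(\gl)$; then with respect to this linear ordering the transition matrix between $\{f_{\ft}\}$ and $\{m_{\ft}\}$ is unitriangular, hence has determinant $1$. Consequently the Gram matrices of $\la\,,\,\ra$ in the two bases have the same determinant, i.e.
$$\mathcal{G}(\gl)=\det\bigl(\la m_{\fs},m_{\ft}\ra\bigr)_{\fs,\ft}=\det\bigl(\la f_{\fs},f_{\ft}\ra\bigr)_{\fs,\ft}.$$

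Next, by Corollary~\ref{Cor orthogonal basis of Spechts}(ii.b), $\la f_{\fs},f_{\ft}\ra=\delta_{\fs\ft}\gma_{\ft}$, so the matrix on the right-hand side is diagonal with entries $\gma_{\ft}$. This immediately yields the first equality
$$\mathcal{G}(\gl)=\prod_{\ft\in\std(\gl)}\gma_{\ft}.$$

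Finally, the second equality is just Theorem~\ref{gamma properties}(ii) applied termwise: substituting the closed formula
$$\gma_{\ft}=\prod_{i=1}^n\frac{\prod_{x\in\mathscr A_{\ft}(i)}(\res_{\ft}(i)-\res(x))}{\prod_{y\in\mathscr R_{\ft}(i)}(\res_{\ft}(i)-\res(y))}$$
into the product over $\ft\in\std(\gl)$ gives the desired expression. There is essentially no obstacle here since all the real work has already been done in Theorem~\ref{gamma properties} (whose inductive computation of $\gma_{\ft}$ was the hard part); the only subtlety in the present corollary is the unitriangularity argument that transports the diagonal Gram determinant back to the cellular basis, and this is a routine consequence of Corollary~\ref{Cor orthogonal basis of Spechts}(i.a).
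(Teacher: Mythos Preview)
Your proposal is correct and follows essentially the same approach as the paper's own proof: both use the unitriangularity of the transition matrix from Corollary~\ref{Cor orthogonal basis of Spechts}(i.a) to pass from the cellular basis to the orthogonal basis, then invoke the orthogonality $\la f_{\fs},f_{\ft}\ra=\delta_{\fs\ft}\gamma_{\ft}$ and finally the closed formula of Theorem~\ref{gamma properties}(ii).
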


\begin{proof} Fix $\ft\in\std(\lambda)$. Then Specht module
$S^{\lambda}$ is isomorphic to the submodule of
$\sh/\sh^{\rhd\lambda}$ which is spanned by
$\{m_{\fs\ft}+\sh^{\rhd\lambda}|\fs\in\std(\lambda)\}$, where
the isomorphism is given by
$\theta: \sh/\sh^{\rhd\lambda}\rightarrow S^{\gl};
          m_{\fs\ft}+\sh^{\rhd\lambda}\mapsto m_\fs$.
On the other hand, by Corollary~\ref{Cor orthogonal basis of Spechts}(ii.c) and (i.a), $\{f_\fs|\fs\in\std(\lambda)\}$ is a
basis of $S^{\gl}$ and the transition matrix between the two bases
$\{m_\fs\}$ and $\{f_\fs\}$ of $S^{\gl}$ is unitriangular. Consequently,
$\mathcal{G}(\lambda)=\det\(\langle f_\fs,f_\ft\rangle\)$, where
$\fs,\ft\in\std(\lambda)$. However, it follows from Theorem~\ref{Them orthogonal basis}(i) and Corollary~\ref{Cor orthogonal basis of Spechts}(ii.b) that
$\langle f_\fs,f_\ft\rangle=\delta_{\fs\ft}\gamma_\ft$. Hence the result follows directly by Theorem~\ref{gamma properties}(ii).
\end{proof}

Set $\tilde f_{\fs\ft}=\gamma_\ft^{-1}f_{\fs\ft}$.  Then
$\tilde f_{\fs\ft}\tilde f_{\fa\fb}=\delta_{\ft\fa}\tilde f_{\fs\fb}$ and
$\{\tilde f_{\fs\ft}\}$ is a basis of $\sh$. Hence, we have the
following.

\begin{corollary} Assume that Assumption~\ref{ppoly} holds. Then $\{\tilde f_{\fs\ft}|\fs,\ft\in\std(\lambda)\for \lambda\in\mpn\}$ is a bases of matrix units in $\sh$.
\end{corollary}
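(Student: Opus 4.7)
The plan is to deduce the corollary as a direct consequence of Theorem~\ref{Them orthogonal basis}(i) together with the invertibility of the scalars $\gamma_\ft$ under Assumption~\ref{ppoly}. The statement that $\{\tilde f_{\fs\ft}\}$ is a system of matrix units means two things: it is an $R$-basis of $\sh$, and it multiplies according to the rule $\tilde f_{\fs\ft}\,\tilde f_{\fa\fb}=\delta_{\fa\ft}\tilde f_{\fs\fb}$ (in particular, products vanish whenever $\fa$ and $\ft$ come from tableaux of different shapes, since then $\fa\neq\ft$).

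First I would verify that each $\gamma_\ft$ is a unit in $R$. By Theorem~\ref{gamma properties}(ii), $\gamma_\ft$ is a product of factors of the form $\res_\ft(i)-\res(x)$ with $x$ an addable or removable node that is distinct from the node containing $i$ in $\ft$. Each such factor has the shape $d+q_s-q_t$ for some integer $d$ with $|d|<n$ and some $s,t$ with $1\le s,t\le m$ (possibly $s=t$, in which case $d\neq 0$). Hence Assumption~\ref{ppoly}, namely that $P_\sh(Q)=n!\prod_{s<t}\prod_{|d|<n}(d+q_s-q_t)$ is invertible, together with the fact that factorials up to $n$ are units, guarantees that every such factor is invertible in $R$, so $\gamma_\ft\in R^\times$.

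Next, the multiplication rule is immediate: using Theorem~\ref{Them orthogonal basis}(i),
\begin{align*}
\tilde f_{\fs\ft}\,\tilde f_{\fa\fb}
&=\gamma_\ft^{-1}\gamma_\fb^{-1}\,f_{\fs\ft}f_{\fa\fb}
=\gamma_\ft^{-1}\gamma_\fb^{-1}\,\delta_{\fa\ft}\gamma_\ft f_{\fs\fb}
=\delta_{\fa\ft}\,\gamma_\fb^{-1}f_{\fs\fb}
=\delta_{\fa\ft}\,\tilde f_{\fs\fb},
\end{align*}
where the Kronecker delta $\delta_{\fa\ft}$ automatically encodes the condition that $\fa$ and $\ft$ must have the same shape (otherwise they cannot be equal). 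Finally, because $\{f_{\fs\ft}\mid\fs,\ft\in\std(\lambda),\ \lambda\in\mpn\}$ is an $R$-basis of $\sh$ by Theorem~\ref{Them orthogonal basis}(ii) and each $\tilde f_{\fs\ft}$ differs from $f_{\fs\ft}$ only by the unit $\gamma_\ft^{-1}$, the set $\{\tilde f_{\fs\ft}\}$ is again an $R$-basis. Thus it is a basis of matrix units in $\sh$, completing the proof.

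The main (and really the only) obstacle is the invertibility step, which requires checking that the factors appearing in the product formula of Theorem~\ref{gamma properties}(ii) are precisely the factors controlled by $P_\sh(Q)$ in Assumption~\ref{ppoly}; once this bookkeeping is done, the rest is a one-line rescaling argument.
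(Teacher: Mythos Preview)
Your proposal is correct and follows essentially the same approach as the paper: the paper states this corollary without a formal proof, deriving it from the sentence immediately preceding it, which notes that $\tilde f_{\fs\ft}\tilde f_{\fa\fb}=\delta_{\ft\fa}\tilde f_{\fs\fb}$ (a direct consequence of Theorem~\ref{Them orthogonal basis}(i)) and that $\{\tilde f_{\fs\ft}\}$ is a basis. You have simply fleshed out the invertibility of $\gamma_\ft$ under Assumption~\ref{ppoly} and written out the one-line rescaling computation, which is exactly the intended argument.
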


 The last result yields an explicit isomorphism from $\sh$ to the group
ring of $W_{m,n}$ when~$P_\sh(Q)$ is invertible.  Assume that $R$
contains a primitive $m$-th root of unity $\zeta$; then, $\sh\cong
RW_{m,n}$ when $q_s=\zeta^s$ for $s=1, \dots,m$. Write
$\tilde f^1_{\fs\ft}$ for the element of $RW_{m,n}$ corresponding to
$\tilde f_{\fs\ft}\in\sh$ under the canonical isomorphism
$\sh\rr RGW_{m,n}$.

\begin{corollary}\label{Cor: iso H G(m, 1,n)}
Assume that $R$ contains a primitive $m$-th root of unity and that
Assumption~\ref{ppoly} holds. Then $\sh\cong RW_{m,n}$
via the $R$-algebra homomorphism determined
by~$\tilde f_{\fs\ft}\mapsto \tilde f^1_{\fs\ft}$.
\end{corollary}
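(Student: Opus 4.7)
The plan is to exhibit the isomorphism as the transfer of matrix unit bases between two semisimple algebras of the same Wedderburn type. By the previous Corollary, under Assumption~\ref{ppoly} the set $\{\tilde f_{\fs\ft}\mid \fs,\ft\in\std(\gl), \gl\in\mpn\}$ is a basis of matrix units for $\sh$, satisfying the orthogonality relation $\tilde f_{\fs\ft}\tilde f_{\fa\fb}=\delta_{\ft\fa}\tilde f_{\fs\fb}$. The first step is to produce an analogous basis on the other side.

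Next, I would invoke Mathas' parallel construction \cite{Mathas-J-Algebra} for the (non-degenerate) cyclotomic Hecke algebra. With the choice $q_s=\zeta^s$, that algebra specializes at $q=1$ to the group algebra $RW_{m,n}$ (using the relation $s_0^m=1$ coming from $\prod_{s=1}^m(s_0-\zeta^s)=s_0^m-1$). Assumption~\ref{ppoly} guarantees that the semisimplicity hypothesis in Mathas' framework holds, producing a corresponding basis of matrix units $\{\tilde f^1_{\fs\ft}\mid \fs,\ft\in\std(\gl),\gl\in\mpn\}$ for $RW_{m,n}$, indexed by the same set of pairs of standard tableaux, and obeying the same orthogonality rule $\tilde f^1_{\fs\ft}\tilde f^1_{\fa\fb}=\delta_{\ft\fa}\tilde f^1_{\fs\fb}$.

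Once both matrix unit bases are in hand the conclusion is essentially formal. Define the $R$-linear map $\phi:\sh\to RW_{m,n}$ by $\tilde f_{\fs\ft}\mapsto\tilde f^1_{\fs\ft}$. Since $\phi$ sends an $R$-basis bijectively to an $R$-basis, it is an $R$-module isomorphism. For multiplicativity, on basis elements
\[
\phi(\tilde f_{\fs\ft}\tilde f_{\fa\fb})=\phi(\delta_{\ft\fa}\tilde f_{\fs\fb})=\delta_{\ft\fa}\tilde f^1_{\fs\fb}=\tilde f^1_{\fs\ft}\tilde f^1_{\fa\fb}=\phi(\tilde f_{\fs\ft})\phi(\tilde f_{\fa\fb}),
\]
and $R$-bilinear extension yields $\phi(hh')=\phi(h)\phi(h')$ for all $h,h'\in\sh$. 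Thus $\phi$ is the required $R$-algebra isomorphism.

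The only nontrivial input is the existence of the matrix units $\tilde f^1_{\fs\ft}$ on the group algebra side; this is the part of the argument that depends on an external reference rather than on work done in this paper. Once that is granted, the matrix unit formalism trivializes the remainder of the proof, in effect reducing both $\sh$ and $RW_{m,n}$ to the same product $\prod_{\gl\in\mpn}M_{|\std(\gl)|}(R)$ and identifying their standard matrix units.
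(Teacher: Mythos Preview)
Your argument is correct and matches the paper's approach: exhibit matrix-unit bases on both sides indexed by the same set, then send one to the other. The paper states the corollary without proof, so the formal check you wrote out is precisely what is implicit.

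The only discrepancy is in how you obtain the matrix units $\tilde f^1_{\fs\ft}$ on the $RW_{m,n}$ side. The paper does not go through Mathas' non-degenerate construction at $q=1$; instead, in the paragraph preceding the corollary it asserts that for the specific parameter choice $q_s=\zeta^s$ the degenerate algebra $\sh$ is canonically isomorphic to $RW_{m,n}$, and then \emph{defines} $\tilde f^1_{\fs\ft}$ as the image of the elements $\tilde f_{\fs\ft}$ built in this very paper (at those special parameters) under that identification. So the paper's $\tilde f^1_{\fs\ft}$ come from its own degenerate machinery applied at $q_s=\zeta^s$, not from the Ariki--Koike framework. Your route through the non-degenerate algebra at $q=1$ is equally valid and produces \emph{an} isomorphism by the same formal reasoning, but the specific elements $\tilde f^1_{\fs\ft}$ you construct may a priori differ from the paper's, and hence the resulting explicit isomorphism may differ as well. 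This is a cosmetic rather than a mathematical discrepancy: any two matrix-unit bases indexed compatibly yield an algebra isomorphism by the identical argument you gave.
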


By parts (i) and (iii) of Theorem~\ref{Them idempotents} below,
$s_i=\sum_\ft \tilde f_{\ft\ft} s_i$, for $0\le i<n$;
so, in principle, we can determine the image of the generators of
$\sh$ under this isomorphism.

\begin{remark}(i)
Lusztig~\cite{L:iso} has shown that there exists a homomorphism $\Phi$
from the Hecke algebra $\sh(W)$ of any finite Weyl group $W$ to the
group ring $RW$ and he shows that $\Phi$ induces an isomorphism when
$\sh(W)$ is semisimple.  Ding and Hu \cite{Ding-Hu} have given a
 generalization of Lusztig's isomorphism theorem for cellular algebras, specially
 the cyclotomic Hecke algebras. Our map is not an analogue of Lusztig's
isomorphism.

 (ii) Brundan and Kleshchev \cite[Corollary 1.3]{BK-Block} showed that when $R$ is a field
  of characteristic zero, there is an isomorphism $\Psi$ between the cyclotomic Hecke algebras $H_{m,n}(q, Q)$ with $q$ not a root of unity and the degenerate cyclotomic Hecke algebras $\sh_{m,n}(Q)$. On the other hand, Lusztig \cite{Lusztig89} showed that there is a completion isomorphism $\Theta$ between the affine Hecke algebras $\sh^{\text{aff}}$  and the degenerate affine Hecke $\widehat{\sh}^{\text{aff}}$. Then the following diagram commutes
   $$\xymatrix{
  \sh^{\text{aff}}_n\ar[r]\ar[d]_{\Theta}&H_{m,n}(q, Q)\ar[d]_{\Psi}&H_q(S_n)\ar[l]\ar[d]_{\Phi}\\
  \widehat{\sh}_n^{\text{aff}}\ar[r]&\sh_{m,n}(Q)&RS_n,\ar[l]}$$
 where the other homomorphisms are the natural ones. Moreover, the isomorphism $\Psi$ can be given by the
   Mathas \cite[Corollary~2.14]{Mathas-J-Algebra} and Corollary~\ref{Cor: iso H G(m, 1,n)} when both $\sh_{m,n}(q, Q)$ and $\sh_{m, n}(Q)$ are semisimple over a field of characteristic zero.
\end{remark}

\begin{theorem}
Suppose that $R$ is a field and that $p_{\sh}(Q)\neq0$. Let $\gl$ be an $m$-multipartition of $n$.
\begin{enumerate}
\item Let $\ft$ be a standard $\lambda$-tableau. Then
$F_\ft=\frac1{\gamma_\ft}f_{\ft\ft}$ and $F_\ft$ is a primitive idempotent
with $S^\lambda\cong \sh F_\ft$.
\item Let $F_\lambda=\sum_{\ft\in\std(\lambda)} F_\ft$. Then
$F_\lambda$ is a primitive central idempotent.
\item $\{F_\lambda|\lambda\in\mpn\}$ is a complete set of
primitive central idempotents; in particular,
$$1=\sum_{\lambda\in\mpn}F_\lambda
   =\sum_{\ft\,\,\mathrm{standard}}F_\ft.$$
\end{enumerate}\label{Them idempotents}
\end{theorem}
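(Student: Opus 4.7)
The plan is to translate everything into the matrix unit basis $\{\tilde f_{\fs\ft}\}$ supplied by Theorem~\ref{Them orthogonal basis} and by the identity $\tilde f_{\fs\ft}\tilde f_{\fa\fb}=\delta_{\ft\fa}\tilde f_{\fs\fb}$ recorded just above. The key assertion driving all three parts is the identification $F_\ft=\gamma_\ft^{-1}f_{\ft\ft}=\tilde f_{\ft\ft}$: once this is established, the $F_\ft$ are literally the diagonal matrix units of $\sh$, and (i), (ii), (iii) reduce to elementary manipulations.

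For (i), I would first obtain the formula $F_\ft=\tilde f_{\ft\ft}$ as follows. Expand $1=\sum_{\mu}\sum_{\fs,\fu\in\std(\mu)}c_{\fs\fu}^{\mu}\tilde f_{\fs\fu}$ and multiply by $F_\ft$ on the left; by Proposition~\ref{Prop-f_st properties}(ii) every basis element whose first index is distinct from $\ft$ is annihilated, and a symmetric argument on the right shows $F_\ft=c\tilde f_{\ft\ft}$ for a single scalar $c$. To pin down $c$, multiply by $f_{\ft\ft}$: since $F_\ft$ is a polynomial in $x_1,\dots,x_n$ and Proposition~\ref{Prop-f_st properties}(iii) gives $x_kf_{\ft\ft}=\res_\ft(k)f_{\ft\ft}$, each factor of $F_\ft$ acts as~$1$, so $F_\ft f_{\ft\ft}=f_{\ft\ft}$; on the other hand $c\tilde f_{\ft\ft}f_{\ft\ft}=c f_{\ft\ft}$, forcing $c=1$. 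Idempotency is then automatic from $\tilde f_{\ft\ft}^2=\tilde f_{\ft\ft}$; the isomorphism $S^\lambda\cong \sh F_\ft$ follows from Corollary~\ref{S=fH} after absorbing the nonzero scalar $\gamma_\ft$; and primitivity of $F_\ft$ follows because under Assumption~\ref{ppoly} the Specht module $S^\lambda$ is simple, so $\sh F_\ft$ is a simple left ideal.

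For (ii) and (iii), the calculations are pure book-keeping in the matrix unit basis. I would check centrality of $F_\lambda=\sum_{\ft\in\std(\lambda)}\tilde f_{\ft\ft}$ by computing $F_\lambda\tilde f_{\fs\fu}$ and $\tilde f_{\fs\fu}F_\lambda$ for arbitrary $\fs,\fu\in\std(\mu)$: both equal $\tilde f_{\fs\fu}$ when $\mu=\lambda$ and vanish otherwise, hence $F_\lambda$ commutes with every basis element. Orthogonality $F_\lambda F_\mu=\delta_{\lambda\mu}F_\lambda$ comes from the same identity, since a term $\tilde f_{\ft\ft}\tilde f_{\fs\fs}$ with $\ft\in\std(\lambda)$, $\fs\in\std(\mu)$ vanishes unless $\ft=\fs$, which is impossible when $\lambda\ne\mu$. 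For $\sum_\lambda F_\lambda=1$, set $E=\sum_\lambda F_\lambda=\sum_{\ft\text{ standard}}\tilde f_{\ft\ft}$; then $E\tilde f_{\fs\fu}=\tilde f_{\fs\fu}$ on each basis element by the same computation, so $E$ acts as the identity on $\sh$ and must equal $1$. Primitivity of each $F_\lambda$ as a central idempotent is then forced, because $\sh F_\lambda=\bigoplus_{\ft\in\std(\lambda)}\sh F_\ft$ is an isotypic component consisting of copies of the simple module $S^\lambda$, which admits no nontrivial central decomposition in the semisimple setting.

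The main obstacle is the identification $F_\ft=\gamma_\ft^{-1}f_{\ft\ft}$ in part~(i): it is the only step that genuinely couples the polynomial-in-$x_k$ definition of $F_\ft$ to the matrix-unit structure of $\sh$, and it relies crucially on the eigenvalue property in Proposition~\ref{Prop-f_st properties}(iii). Every later assertion is a direct consequence of the multiplication rule $\tilde f_{\fs\ft}\tilde f_{\fa\fb}=\delta_{\ft\fa}\tilde f_{\fs\fb}$ together with the semisimplicity of $\sh$ provided by Assumption~\ref{ppoly}.
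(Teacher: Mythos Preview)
Your argument is correct and follows essentially the same route as the paper's proof. The paper expands $F_\ft$ directly in the $\{f_{\fa\fb}\}$-basis and then multiplies by $f_{\fs\ft}$ on the left, using Proposition~\ref{Prop-f_st properties}(ii) and the multiplication rule of Theorem~\ref{Them orthogonal basis}(i) to read off the coefficients; you instead expand $1$ and multiply by $F_\ft$ on each side, which amounts to the same computation. For parts~(ii) and~(iii) the paper is terser, simply invoking the direct-sum decomposition $\sh=\bigoplus_\lambda\bigoplus_{\ft\in\std(\lambda)}\sh F_\ft$ into simples, whereas you carry out the explicit matrix-unit calculations---but the content is identical.
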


\begin{proof}
Note that $F_{\ft}=\sum_{\fa,\fb}r_{\fa\fb}f_{\fa\fb}$ by Theorem~\ref{Them orthogonal basis}(ii).
Now by Proposition~\ref{Prop-f_st properties}(ii) and Theorem~\ref{Them orthogonal basis}(i)
$f_{\fs\ft}=f_{\fs\ft}F_{\ft}=\sum_{\fa,\fb}r_{\fa\fb}f_{\fs\ft}f_{\fa\fb}=\sum_{\fb}\gma_{\ft}r_{\ft\fb}f_{\fs\fb}$.
Equating coefficients, $r_{\ft\fb}=0$ if $\fb\neq \ft$ and $r_{\ft\ft}\gma_{\ft}=1$. Since $F_{\ft}^*=F_{\ft}$, we also
 have $r_{\fb\ft}=0$ if $\fb\neq\ft$. Hence $F_\ft=\frac{1}{\gma_{\ft}}$.
 By Theorem~\ref{Them orthogonal basis}(i), $F_\ft=\frac{1}{\gma_{\ft}}$ is an idempotent.  Further, $F_\ft$ is primitive
 since $S^{\gl}$ is irreducible and $S^{\gl}\cong \sh F_{\ft}$.

 (ii) and (iii) now follows because $\sh=\bigoplus_{\gl\in\mpn}\bigoplus_{\ft\in\std(\gl)}\sh F_{\ft}$ is a decomposition
 of $\sh$ into a direct sum of simple modules $\{S^{\gl}\cong\sh F_{\ft}\mid \gl\in\mpn, \ft\in\std(\gl)\}$.
\end{proof}

\begin{corollary}\label{Cor x_k F_k}Let $\ft$ be a standard tableau and let $k$ be an
integer with $1\le k\le n$. Then
\begin{enumerate}
\item $x_kF_\ft=\res_\ft(k)F_\ft$
and $x_kf_{\fs\ft}=\res_\fs(k)f_{\fs\ft}$.
\label{F_t L_k}

\item $\prod_{c\in\mcr(k)}(x_k-c)=0$ and
this is the minimum polynomial for $x_k$ acting on $\sh$.

\item $x_k=\sum_\ft \res_\ft(k) F_\ft$, where the sum is over the set of all
standard tableaux $($of arbitrary shape$)$.
\end{enumerate}
\end{corollary}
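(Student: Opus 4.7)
The plan is to derive all three parts as direct corollaries of the seminormal basis theory already established, specifically Proposition~\ref{Prop-f_st properties}(iii) and Theorem~\ref{Them idempotents}; essentially no new computation is required.

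For part (i), the second identity $x_kf_{\fs\ft}=\res_\fs(k)f_{\fs\ft}$ is exactly Proposition~\ref{Prop-f_st properties}(iii), so there is nothing to prove there. For the first identity, I would invoke Theorem~\ref{Them idempotents}(i), which says $F_\ft=\gamma_\ft^{-1}f_{\ft\ft}$. Multiplying by $x_k$ and applying the second identity with $\fs=\ft$ gives
\[
x_kF_\ft=\gamma_\ft^{-1}x_kf_{\ft\ft}=\gamma_\ft^{-1}\res_\ft(k)f_{\ft\ft}=\res_\ft(k)F_\ft.
\]

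For part (iii), I would take the resolution of the identity $1=\sum_\ft F_\ft$ from Theorem~\ref{Them idempotents}(iii), left-multiply by $x_k$, and substitute part (i):
\[
x_k=x_k\!\cdot 1=\sum_\ft x_kF_\ft=\sum_\ft \res_\ft(k)F_\ft,
\]
where the sum is over all standard tableaux of arbitrary shape.

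Finally, for part (ii), part (i) shows $(x_k-\res_\ft(k))F_\ft=0$ for every standard $\ft$, and since $\res_\ft(k)\in\mcr(k)$ this gives $\prod_{c\in\mcr(k)}(x_k-c)F_\ft=0$. Summing over $\ft$ and using $\sum_\ft F_\ft=1$ yields $\prod_{c\in\mcr(k)}(x_k-c)=0$. To show this is the minimal polynomial, suppose $p(T)\in R[T]$ satisfies $p(x_k)=0$. For each standard tableau $\ft$, applying $p(x_k)$ to $F_\ft\neq 0$ and using part (i) gives $p(\res_\ft(k))F_\ft=0$, whence $p(\res_\ft(k))=0$. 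As $\ft$ ranges over all standard tableaux, the value $\res_\ft(k)$ ranges over all of $\mcr(k)$ (by definition of $\mcr(k)$), so every $c\in\mcr(k)$ is a root of $p$; since the elements of $\mcr(k)$ are distinct, $\prod_{c\in\mcr(k)}(T-c)$ divides $p(T)$. There is no real obstacle here; the only point to keep in mind is the distinctness of residues in $\mcr(k)$, which was already observed in the paper.
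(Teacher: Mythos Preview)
Your proof is correct and follows essentially the same approach as the paper: parts (i) and (iii) are derived identically from Theorem~\ref{Them idempotents} and Proposition~\ref{Prop-f_st properties}(iii), and the vanishing in part (ii) is obtained the same way by applying the product to each $F_\ft$ and summing. The only minor difference is in the minimality argument: the paper directly removes one factor $(x_k-c')$ and shows the resulting product is nonzero by evaluating on an $F_\ft$ with $\res_\ft(k)=c'$, whereas you argue that any annihilating polynomial must vanish at every element of $\mcr(k)$ and hence be divisible by the full product; these are equivalent formulations of the same idea.
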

\begin{proof}(i) follows directly by Theorem~\ref{Them idempotents}(i) and Proposition~\ref{Prop-f_st properties}(iii).

(ii) First, $\prod_{c\in\mcr(k)}(x_k-c)=\sum_{\ft}\prod_{c\in\mcr(k)}(x_k-c)F_{\ft}=
\sum_{\ft}\prod_{c\in\mcr(k)}(\res_{\ft}(k)-c)=0$. Now if we remove any factor $x_k-c$ from the product
$\prod_{c\in\mcr(k)}(x_k-c)$. Then there exists a standard tableau $\ft$ such that $\res_{\ft}(k)=c'$ and for any standard
tableau $\fs\neq \ft$, $\res_{\fs}(k)\neq c'$. Hence $\prod_{\mcr(k)\ni c\neq c'}(x_k-c)=\sum_{\fs\neq\ft}\prod_{\mcr(k)\ni c\neq c'}(x_k-c)
F_{\ft}+\prod_{\mcr(k)\ni c\neq c'}(x_k-c)F_{\ft}\neq 0$, that is, $\prod_{c\in\mcr(k)}(x_k-c)$ is the minimal polynomial
for $x_k$ acting on $\sh$.

(iii) is follows directly by (i) and Theorem~\ref{Them idempotents}(iii). \end{proof}

Now we have a remark on the center of $\sh$ which described explicitly by
Brundan~\cite[Theorem~1]{B-repre08}.

\begin{remark}
The results in this section can be used to give a differential proof of the center of $\sh$, which
is the set of symmetric polynomials in $x_1, \dots, x_n$, when $\sh$ is semisimple.
\end{remark}

\section{Dual Specht modules}\label{sec: dual Spechts}

Let $\mathscr{Z}=\bz[\hat q_1,\dots,\hat q_m]$, where
$\hat{q}_1,\dots,\hat q_m$ are indeterminates over $\bz$,
and let $\sh_{\mathscr{Z}}$ be the degenerated cyclotomic Hecke algebra with parameters
$\hat{q}_1,\dots,\hat q_m$ over $\mathscr{Z}$. Consider the ring $R$ as a
$\mathscr{Z}$-module by letting $\hat q_i$ act on $R$ as multiplication by
$q_i$, for
$1\le i\le m$. Then $\sh\cong\sh_{\mathscr{Z}}\otimes_\mathscr{Z} R$, since $\sh$ is free as
an $R$-module; we say that $\sh$ is a {\it specialization} of $\sh_{\mathscr{Z}}$ and
call the map which sends $h\in\sh_{\mathscr{Z}}$ to $h\otimes1\in\sh$ the
{\it specialization homomorphism}.

\begin{definition}\label{Def bar-operation} Let $\bar{\,\,}: \mathscr{Z}\rr \mathscr{Z}$ be the $\bz$-linear map given by
$\hat q_i\mapsto -\hat q_{m-i+1}$, $x_i\mapsto -x_i$ for $1\le i\le m$, and $s_k=-s_k$ for
$1\le k<n$.\end{definition}

 Using the relations of $\sh_{\mathscr{Z}}$ it is easy to verify
that $\bar{\,\,}$ now extends to a $\bz$-linear ring involution $\bar{\,}: \sh_{\mathscr{Z}}\rr \sh_{\mathscr{Z}}$
of $\sh_{\mathscr{Z}}$. Hereafter, we drop the distinction between $\hat q_i$ and $q_i$.
Suppose that $h\in\sh$. Then there exists a (not necessarily unique)
$h_\mathscr{Z}\in\sh_{\mathscr{Z}}$ such that $h=h_\mathscr{Z}\otimes 1$ under specialization; we
sometimes abuse notation and write $\bar{h}=\overline{h_\mathscr{Z}}\otimes 1\in\sh$. As the
map $\bar{\,}$ does not in general define a semilinear involution on $R$,
this notation is not well-defined on elements of $\sh$; however, in
the cases where we employ it there should be no ambiguity. For
example, $\bar{w}=(-1)^{l(w)}w$  for all $w\in S_n$.

\begin{remark}The operation $\bar{\,\,}$ is very differential form the one defined by Mathas in \cite[\S3]{Mathas-J-Algebra} in the case of the cyclotomic Hecke algebras.\end{remark}

\begin{definition}\label{Def y_lamda u^-_lamda}Suppose that $\lambda=(\gl^1;\dots, \gl^m)$ is an $m$-multipartition of $n$.  Let
$y_\lambda=\sum_{w\in S_\lambda}(-1)^{l(w)}w$ and define
$n_\lambda=y_\lambda u_\lambda^-$ where
$$u_\lambda^-=(-1)^{n(\gl)}\prod_{i=2}^m\prod_{k=1}^{a_i}(x_k-q_{m-i+1})
             =(-1)^{n(\gl)}\prod_{i=1}^{m-1}\prod_{k=1}^{a_{m-i+1}}(x_k-q_{i}), $$
             where $a_i=|\gl^1|+\cdots+|\gl^{i-1}|$ for $i=1, \dots, m$ and $n(\gl)=\sum_{i=2}^ma_i=\sum_{i=1}^m(i-1)|\gl^i|$.

  For standard tableaux $\fs,\ft\in\std(\lambda)$ set
$n_{\fs\ft}=\overline{d(\fs)}^*n_\lambda \overline{d(\ft)}$.
\end{definition}

\begin{lemma}\label{Lem bar}Keep notations as above. Then $y_\lambda=\overline{x_\lambda}$, $u_\lambda^-=\overline{u_\lambda^+}$, and
$\overline{m_{\fs\ft}}\in\sh_{\mathscr{Z}}$ is mapped to $n_{\fs\ft}$ under specialization.
\end{lemma}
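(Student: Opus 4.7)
The proof is essentially a direct computation once the formal properties of $\bar{\,\,}$ are in hand, so my plan is to verify the three assertions in order, leveraging the fact that $\bar{\,\,}$ is a ring involution on $\sh_{\mathscr Z}$.

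First I would prove $y_\lambda=\overline{x_\lambda}$. Since $\overline{s_k}=-s_k$ and $\bar{\,\,}$ is a ring homomorphism, any reduced expression $w=s_{i_1}\cdots s_{i_{\ell(w)}}$ yields $\overline{w}=(-1)^{\ell(w)}w$, a fact already noted in the text preceding the lemma. Applying $\bar{\,\,}$ termwise to $x_\lambda=\sum_{w\in S_\lambda}w$ therefore gives $\overline{x_\lambda}=\sum_{w\in S_\lambda}(-1)^{\ell(w)}w=y_\lambda$.

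Next I would handle $u_\lambda^-=\overline{u_\lambda^+}$. For each factor $x_k-q_i$ appearing in $u_\lambda^+$ the definition of $\bar{\,\,}$ gives
\[
\overline{x_k-q_i}=-x_k-(-q_{m-i+1})=-(x_k-q_{m-i+1}).
\]
The total number of factors in $u_\lambda^+$ is $\sum_{i=2}^m a_i=n(\lambda)$, so using that $\bar{\,\,}$ is multiplicative and that the factors commute (Lemma~\ref{Lem si xj}(iii)), I collect the signs and indices to obtain
\[
\overline{u_\lambda^+}=(-1)^{n(\lambda)}\prod_{i=2}^m\prod_{k=1}^{a_i}(x_k-q_{m-i+1})=u_\lambda^-.
\]

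For the third assertion I would combine the two previous computations with the compatibility between $\bar{\,\,}$ and the anti-automorphism $*$. The only non-cosmetic observation is that if $d(\fs)=s_{i_1}\cdots s_{i_\ell}$ is reduced then $d(\fs)^*=s_{i_\ell}\cdots s_{i_1}$ has the same length, and hence $\overline{d(\fs)^*}=(-1)^{\ell(d(\fs))}d(\fs)^*=\bigl(\overline{d(\fs)}\bigr)^*$. Using multiplicativity of $\bar{\,\,}$,
\[
\overline{m_{\fs\ft}}=\overline{d(\fs)^*}\cdot\overline{x_\lambda}\cdot\overline{u_\lambda^+}\cdot\overline{d(\ft)}
=\bigl(\overline{d(\fs)}\bigr)^*\,y_\lambda\,u_\lambda^-\,\overline{d(\ft)}
=\overline{d(\fs)}^*\,n_\lambda\,\overline{d(\ft)},
\]
which equals $n_{\fs\ft}$ in $\sh_{\mathscr Z}$; specialization is a ring homomorphism, so this identity transports to $\sh$, finishing the proof.

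The only real subtlety—hardly an obstacle—is bookkeeping: one must verify that $\bar{\,\,}$ really extends to a ring involution on $\sh_{\mathscr Z}$ (checking the defining relations of Definition~\ref{Def DCHA}), and must keep careful track of the sign $(-1)^{n(\lambda)}$ when rewriting the product $u_\lambda^-$. Both are routine, so I expect no genuine obstruction.
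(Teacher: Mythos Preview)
Your proof is correct and follows exactly the approach the paper intends: the paper's own proof is the single line ``All statements follow directly by definitions and the computations,'' and your argument simply spells out those computations in full. There is nothing to add.
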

\begin{proof} All statements follows directly by definitions and the computations.\end{proof}

\begin{theorem}
The degenerate cyclotomic Hecke algebra $\sh$ is free as an $R$-module with
cellular basis
$\{n_{\fs\ft}|\fs,\ft\in\std(\lambda)\for \lambda\in\mpn\}$.
\end{theorem}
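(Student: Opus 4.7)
The plan is to deduce the theorem from Theorem~\ref{std basis} by transporting the cellular structure of $\{m_{\fs\ft}\}$ through the bar involution. Since $\bar{\,}$ is only $\bz$-linear, the cleanest setting is the integral form $\sh_\mathscr{Z}$: I would prove the statement there and then pass to $\sh$ via specialization using Lemma~\ref{Lem bar}. Throughout, the cellular datum $(\mpn,\unrhd,\std,\ast)$ remains the same as for the $m$-basis.

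First I would verify that $\{\overline{m_{\fs\ft}}\}$ is a $\mathscr{Z}$-basis of $\sh_\mathscr{Z}$. The bar map is a ring involution, hence a $\bz$-linear bijection of $\sh_\mathscr{Z}$, and its restriction $\hat q_i\mapsto -\hat q_{m-i+1}$ is a $\bz$-linear bijection of $\mathscr{Z}$. Writing an arbitrary element as $h=\sum c_{\fs\ft} m_{\fs\ft}$ in the $\mathscr{Z}$-basis of Theorem~\ref{std basis} and applying bar gives $\bar h=\sum \overline{c_{\fs\ft}}\,\overline{m_{\fs\ft}}$; the bijectivity of both maps yields existence and uniqueness of such an expansion in the bar-image basis. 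Specializing and invoking Lemma~\ref{Lem bar} produces the $R$-basis $\{n_{\fs\ft}\}$ of $\sh$.

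For the anti-automorphism axiom, I would observe that $\ast$ fixes every generator $s_i$ and $x_j$ while $\bar{\,}$ negates them, so on generators $(\bar h)^\ast = \overline{h^\ast}$; extending multiplicatively, the two operations commute on all of $\sh_\mathscr{Z}$. Therefore
$$n_{\fs\ft}^\ast \;=\; \overline{m_{\fs\ft}}^{\,\ast} \;=\; \overline{m_{\fs\ft}^{\,\ast}} \;=\; \overline{m_{\ft\fs}} \;=\; n_{\ft\fs}$$
after specialization. For the multiplication axiom, I would apply bar to the cellular identity for $\{m_{\fs\ft}\}$ in $\sh_\mathscr{Z}$: from $a\, m_{\fs\ft}^\lambda \equiv \sum_\fu r_{\fs\fu}(a)\, m_{\fu\ft}^\lambda \pmod{\sh_\mathscr{Z}^{\rhd\lambda}}$, substituting $a=\bar b$ yields $b\,\overline{m_{\fs\ft}^\lambda}\equiv \sum_\fu \overline{r_{\fs\fu}(\bar b)}\,\overline{m_{\fu\ft}^\lambda}\pmod{\overline{\sh_\mathscr{Z}^{\rhd\lambda}}}$. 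Because bar is a $\bz$-linear bijection and $\bar{\mathscr{Z}}=\mathscr{Z}$, the ideal $\overline{\sh_\mathscr{Z}^{\rhd\lambda}}$ is precisely the $\mathscr{Z}$-span of $\{\overline{m_{\fa\fb}^\mu}\mid \mu\rhd\lambda\}$. Specializing then gives exactly the cellular axiom for $\{n_{\fs\ft}\}$ with respect to the filtration ideal spanned by $\{n_{\fa\fb}^\mu : \mu\rhd\lambda\}$.

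The main (mild) obstacle is the scalar-twisting: because bar is $\bz$-linear rather than $R$-linear, one cannot perform the linear algebra directly over $R$ or even over $\mathscr{Z}$ in an $\mathscr{Z}$-linear sense. Framing the argument integrally over $\sh_\mathscr{Z}$, where bar is an honest ring automorphism and where Lemma~\ref{Lem bar} identifies $\overline{m_{\fs\ft}}$ with the lift of $n_{\fs\ft}$, and specializing only at the end, sidesteps this issue and reduces the theorem to a routine transport of the cellular structure along the involution.
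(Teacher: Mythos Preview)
Your proposal is correct and is precisely the argument the paper has in mind: the paper's own proof is a single line citing Theorem~\ref{std basis} and Lemma~\ref{Lem bar}, i.e.\ transport the cellular datum of $\{m_{\fs\ft}\}$ through the bar involution on $\sh_{\mathscr Z}$ and specialize. You have simply unpacked the details---verifying the basis, $\ast$-compatibility, and multiplication axioms explicitly and handling the semilinearity by working integrally---which the paper leaves to the reader.
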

\begin{proof}The assertion follows by Theorem~\ref{std basis} and Lemma~\ref{Lem bar}. \end{proof}

Let $\lambda$ be an $m$-multipartition of $n$. Then $\overline{\sh^{\rhd\gl}}$ is a
two-sided ideal of $\sh$ which is free as an $R$-module with basis
$\{n_{\fa\fb}|\fa,\fb\in\std(\mu)\for \mu\rhd\lambda\}$. For simplicity, we write $\bar{\sh}^{\rhd\gl}$ for $\overline{\sh^{\rhd\gl}}$.

\begin{definition}
Let $\tilde S^\lambda$ be the Specht module corresponding to $\lambda$ determined by the basis $\{n_{\fs\ft}\}$, which is call the \textit{dual Specht module} corresponding to $\gl$.
\end{definition}

It is clearly that $\tilde S^\lambda\cong \sh n_\lambda/(\sh n_\lambda\cap\bar{\sh}^{\rhd\gl})$
and $\tilde S^\lambda$ is free as an $R$-module with basis
$\{n_\ft|\ft\in\std(\lambda)\}$, where
$n_\ft=n_{\ft\ft^{\gl}}+\bar{\sh}^{\rhd\gl}=\overline{m_{\ft\ft^\gl}+\sh^{\rhd\gl}}$ for all
$\ft\in\std(\lambda)$.

In order to compare the two modules $S^\lambda$ and $\tilde
S^\lambda$ we need to introduce some more notation.
Given a partition $\sigma$ let $\bar{\sigma}=(\bar{\sigma}_1,\bar{\sigma}_2,\dots)$
be the partition which is conjugate to $\sigma$; thus, $\bar{\sigma}_i$
is the number of nodes in column $i$ of the diagram of $\sigma$.
If $\lambda=(\lambda^1; \dots; \gl^m)$ is an $m$-multipartition then the {\it conjugate}
$m$-multipartition to $\gl$ is the $m$-multipartition
$\bar{\gl}=(\bar{\gl}^1, \dots, \bar{\gl}^m)$
with $\bar{\gl}^{i}=\overline{\lambda^{m-i+1}}$ for $1\le i\le m$.
Now suppose that $\ft=(\ft^1; \dots; \ft^m)$ is a standard $\lambda$-tableau.
Then the {\it conjugate} of $\ft$ is the standard $\bar{\gl}$-tableau
$\bar{\ft}=(\bar{\ft}^1; \dots; \bar{\ft}^m)$ where $\bar{\ft}^{i}$ is the tableau obtained by
interchanging the rows and columns of $\ft^{m-i+1}$.

\begin{example}\label{Example t^lamda t_lamda}
Let $\gl=(3\cdot1;4 \cdot 2)\in\mathscr{P}(2,10)$. Then $\bar{\gl}=(2 \cdot2 \cdot 1
 \cdot 1; 2 \cdot 1 \cdot 1)\in\mathscr{P}(2,10)$, and
$$\begin{array}{lll}\protect{
[\gl]=\left(\begin{array}{cc}\diagram{&&\cr \cr},\, &\diagram{&&&\cr &\cr}\end{array}\right)}
\,&\protect{
\ft^{\gl}=\left(\begin{array}{cc}\diagram{1&2&3\cr 4\cr},\, &\diagram{5&6&7&8\cr 9&10\cr}
\end{array}\right)}\,&
\protect{\overline{\ft^\gl}=\left(\begin{array}{cc}
\diagram{5&9\cr 6&10\cr 7\cr 8\cr},\,&\diagram{1&4\cr 2\cr 3\cr}
\end{array}\right)}\\
\protect{[\bar{\gl}]=\left(\begin{array}{cc}
\diagram{&\cr &\cr \cr \cr},\,&\diagram{&\cr \cr \cr}
\end{array}\right)}& \protect{\ft^{\bar{\gl}}=\left(\begin{array}{cc}
\diagram{1&2\cr 3&4\cr 5\cr 6\cr},\,&\diagram{7&8\cr 9\cr 10\cr}
\end{array}\right)}  & \protect{\overline{\ft^{\bar{\gl}}}=\left(\begin{array}{cc}
\diagram{7&9&10\cr 8\cr},\,&\diagram{1&3&5&6\cr 2&4\cr}\end{array}\right).}
\end{array}
$$
\end{example}

\begin{lemma}
 Let $\ft$ be a standard $\lambda$-tableau. Then
$\overline{\res_\ft(k)}=-\res_{\bar{\ft}}(k)$ in $\mathscr{Z}$, for $1\le k\le n$.
\label{res'}\end{lemma}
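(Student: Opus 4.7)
The plan is to unwind both sides by the definitions of residue, conjugate tableau, and the bar involution, and observe they agree.

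First I would fix $k$ with $1\le k\le n$ and locate it in $\ft$: say $k$ occupies the node $(i,j,c)\in[\lambda]$, so that by definition $\res_\ft(k)=j-i+q_c$. Next I would determine where $k$ sits in $\bar\ft$. Since $\bar\ft^{s}=\overline{\ft^{m-s+1}}$ is obtained from $\ft^{m-s+1}$ by interchanging rows and columns, the entry $k$, which lies in row $i$, column $j$ of the $c$-th component of $\ft$, must lie in row $j$, column $i$ of the $(m-c+1)$-th component of $\bar\ft$. Hence $k$ occupies the node $(j,i,m-c+1)\in[\bar\lambda]$, and therefore
\[
\res_{\bar\ft}(k)=i-j+q_{m-c+1}.
\]

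Then I would apply the bar involution of Definition~\ref{Def bar-operation}. Since $\bar{\,}$ is $\bz$-linear and sends $q_c$ to $-q_{m-c+1}$, we have
\[
\overline{\res_\ft(k)}=\overline{j-i+q_c}=(j-i)-q_{m-c+1}=-\bigl(i-j+q_{m-c+1}\bigr)=-\res_{\bar\ft}(k),
\]
which is the claimed identity. The only subtle point is checking that the rows/columns swap really produces the index shift $c\mapsto m-c+1$ on the component label, but this is immediate from the definition $\bar\lambda^{s}=\overline{\lambda^{m-s+1}}$, so there is no genuine obstacle; the lemma is essentially a bookkeeping verification.
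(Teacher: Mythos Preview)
Your proof is correct and is precisely the unwinding of definitions that the paper has in mind; the paper's own proof simply states that the lemma follows immediately from the definitions.
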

 \begin{proof}
 The lemma follows immediately form the definitions.
 \end{proof}

The point of Lemma~\ref{res'} is that the expression $\res_{\bar{\ft}}(k)$ is always well-defined; whereas
$\overline{\res_\ft(k)}$ is ambiguous for certain rings $R$. As a first
consequence we have the following fact.

\begin{proposition}
Let $\fs$ and $\ft$ be standard $\lambda$-tableaux and suppose
that $k$ is an integer with $1\le k\le n$. Then there exist
$r_\fb\in R$ such that
$$x_kn_{\fs\ft}=\res_{\bar{\fs}}(k)n_{\fs\ft}
       +\sum_{\std(\lambda)\ni\fb\rhd \fs}r_\fb n_{\fb\ft}
                \bmod\overline{\sh}^{\rhd\gl}.$$
\label{L_k' action}
\end{proposition}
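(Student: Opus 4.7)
The plan is to deduce Proposition~\ref{L_k' action} directly from Theorem~\ref{x_k action} by applying the $\mathbb{Z}$-linear ring involution $\bar{\,}$ on $\sh_{\mathscr{Z}}$ to both sides of the congruence in Theorem~\ref{x_k action}(i). All the pieces have been set up for exactly this kind of transfer: the bar operation converts $m$-basis elements into $n$-basis elements by Lemma~\ref{Lem bar}, converts residues with a sign by Lemma~\ref{res'}, and clearly sends the two-sided ideal $\sh_{\mathscr{Z}}^{\rhd\gl}$ onto $\overline{\sh_{\mathscr{Z}}^{\rhd\gl}}$ (which specializes to $\bar{\sh}^{\rhd\gl}$).

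Concretely, I would first work in $\sh_{\mathscr{Z}}$, where every residue lies in $\mathscr{Z}$ so the bar of a residue is unambiguous. By Theorem~\ref{x_k action}(i) there exist $r_{\fa}\in\mathscr{Z}$ with
\[
x_k\, m_{\fs\ft}\;\equiv\;\res_{\fs}(k)\, m_{\fs\ft}+\sum_{\fa\rhd\fs} r_{\fa}\, m_{\fa\ft}\pmod{\sh_{\mathscr{Z}}^{\rhd\gl}}.
\]
Now apply $\bar{\,}$. On the left, $\bar{x_k}=-x_k$ and $\overline{m_{\fs\ft}}=n_{\fs\ft}$ by Lemma~\ref{Lem bar}, so the left side becomes $-x_k\, n_{\fs\ft}$. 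On the right, Lemma~\ref{res'} gives $\overline{\res_{\fs}(k)}=-\res_{\bar{\fs}}(k)$, while each $m_{\fa\ft}$ is sent to $n_{\fa\ft}$; the indexing set $\{\fa\in\std(\gl):\fa\rhd\fs\}$ is unaffected because the bar acts on algebra elements, not on tableau labels. Thus
\[
-x_k\, n_{\fs\ft}\;\equiv\;-\res_{\bar{\fs}}(k)\, n_{\fs\ft}+\sum_{\fa\rhd\fs}\overline{r_{\fa}}\, n_{\fa\ft}\pmod{\overline{\sh_{\mathscr{Z}}^{\rhd\gl}}}.
\]
Multiplying through by $-1$ and then specializing via $\sh_{\mathscr{Z}}\otimes_{\mathscr{Z}} R\cong\sh$ yields the claimed expansion with coefficients $r_{\fb}:=-\overline{r_{\fa}}\in R$ after specialization.

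There is no real obstacle here; the work was done earlier in setting up the involution and verifying its compatibility with $m$-basis elements and residues (Lemmas~\ref{Lem bar} and~\ref{res'}). The only mild subtlety, and the only place one must be careful, is that the bar operation is genuinely defined on $\sh_{\mathscr{Z}}$ rather than on $\sh$; this is why the argument is carried out at the integral level and then specialized, avoiding the ambiguity of $\bar{\,}$ on arbitrary elements of $\sh$ mentioned in the paragraph preceding Definition~\ref{Def y_lamda u^-_lamda}. Once that is observed, the proposition is immediate, and an analogous right-handed statement $n_{\fs\ft} x_k\equiv\res_{\bar{\ft}}(k)\,n_{\fs\ft}+\sum_{\fb\rhd\ft}r_{\fb}\,n_{\fs\fb}\pmod{\bar{\sh}^{\rhd\gl}}$ follows from Remark~\ref{Remark x_k m_st}(ii) by the same recipe.
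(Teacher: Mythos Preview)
Your proof is correct and follows essentially the same route as the paper: work over $\sh_{\mathscr{Z}}$, apply the involution $\bar{\,}$ to the congruence in Theorem~\ref{x_k action}(i) using $\bar{x_k}=-x_k$, Lemma~\ref{Lem bar}, and Lemma~\ref{res'}, then specialize to $R$. Your write-up is in fact slightly more careful than the paper's in that you record the barred coefficients as $\overline{r_{\fa}}$ rather than $r_{\fa}$, though of course this makes no difference to the conclusion since only existence of suitable scalars is asserted.
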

\begin{proof}
First assume that $R=\mathscr{Z}$. Then $\bar{\,\,}$ is a $\bz$-linear ring
involution on $\sh_{\mathscr{Z}}$ and~$\bar{x}_k=-x_k$; therefore, by Theorem~\ref{x_k action},
$$\begin{aligned}&x_kn_{\fs\ft}=-\overline{x_km_{\fs\ft}}&=&
         -\overline{\Big(\res_{\fs}(k)m_{\fs\ft} +\sum_{\std(\lambda)\ni\fb\rhd \fs}r_\fb
         m_{\fb\ft}\bmod\sh^{\rhd\gl}\Big)}\\
       &&=&-\overline{\res_{\fs}(k)}\overline{m}_{\fs\ft} +\sum_{\std(\lambda)\ni\fb\rhd \fs}(-r_\fb)
       \overline{m}_{\fb\ft}\bmod\overline{\sh}^{\rhd\gl}\\
       &&=&\res_{\bar{\fs}}(k)n_{\fs\ft} +\sum_{\std(\lambda)\ni\fb\rhd \fs}(-r_\fb)
       n_{\fb\ft}\bmod\overline{\sh}^{\rhd\gl}.\end{aligned}$$
The general
case now follows by specialization since $\sh\cong\sh_{\mathscr{Z}}\otimes_\mathscr{Z} R$.
\end{proof}

Next consider the orthogonal basis $\{f_{\fs\ft}\}$ of $\sh$ in the
case where $P_\sh(Q)$ is invertible. Let $\mathscr{Z}_P$ be the localization
of $\mathscr{Z}$ at~$P_\sh(Q)$ and let $\sh_{\mathscr{Z}_P}$ be the corresponding
degenerate cyclotomic Hecke algebra.  The involution $\bar{\,}$ extends to $\sh_{\mathscr{Z}_P}$
and $\sh$ is a specialization of $\sh_{\mathscr{Z}_P}$ whenever $P_\sh(Q)$ is
invertible in $R$. (Note that $q_1,\dots,q_m$ are indeterminates in
$\mathscr{Z}_P$.)

In general, $f_{\fs\ft}\notin\sh_{\mathscr{Z}}$; however, if
$\ft\ne\fu$ then $\res_\ft(k)-\res_\fu(k)$ is a factor of $P_\sh(Q)$ for
all $k$, so $f_{\fs\ft}\in\sh_{\mathscr{Z}_P}$ and we can speak of the elements
$F_\ft$ and $f_{\fs\ft}\in\sh_{\mathscr{Z}_P}$. More generally, whenever
$P_\sh(Q)$ is invertible in $R$ we have an element $\bar{f}_{\fs\ft}\in\sh$
via specialization because $\sh\cong\sh_{\mathscr{Z}_P}\otimes_{\mathscr{Z}_P}R$.

\begin{proposition}
Suppose that $\ft$ is a standard tableau. Then $\overline{F_\ft}=F_{\bar{\ft}}$
in $\sh_{\mathscr{Z}_P}$.
\label{F'}
\end{proposition}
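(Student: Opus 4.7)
The plan is to carry out a direct term-by-term computation on the explicit product formula
$$F_\ft = \prod_{k=1}^n \prod_{\mcr(k)\ni c\neq \res_\ft(k)} \frac{x_k-c}{\res_\ft(k)-c}$$
from Definition~\ref{def F_t f_st}. Since $\bar{\,\,}$ is a $\bz$-linear ring involution on $\sh_{\mathscr{Z}_P}$, it commutes with the product and preserves the quotient structure, so it suffices to understand what it does to a single factor $\dfrac{x_k-c}{\res_\ft(k)-c}$.

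First I would compute the image of each factor. By Definition~\ref{Def bar-operation}, $\overline{x_k} = -x_k$, and by Lemma~\ref{res'}, $\overline{\res_\ft(k)} = -\res_{\bar{\ft}}(k)$. Hence
$$\overline{\frac{x_k-c}{\res_\ft(k)-c}} \;=\; \frac{-x_k-\bar c}{-\res_{\bar\ft}(k)-\bar c} \;=\; \frac{x_k-(-\bar c)}{\res_{\bar\ft}(k)-(-\bar c)},$$
where the common factor $-1$ cancels cleanly between numerator and denominator. So, after bar is applied, every factor is of exactly the shape demanded by the formula for $F_{\bar\ft}$, but with $c$ replaced by $-\bar c$.

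The crux is then to check that the substitution $c \mapsto -\bar c$ permutes the index set. Because conjugation $\ft\mapsto\bar\ft$ is an involution on the set of all standard tableaux, the set $\{\res_{\bar\ft}(k)\mid \ft\text{ standard}\}$ equals $\mcr(k)$; combined with Lemma~\ref{res'} this shows $-\bar c \in \mcr(k)$ whenever $c\in\mcr(k)$, and the map is evidently an involution on $\mathscr{Z}_P$, hence a bijection of $\mcr(k)$ onto itself. The same identity $\res_{\bar\ft}(k)=-\overline{\res_\ft(k)}$ shows that the excluded term $c=\res_\ft(k)$ corresponds precisely to the excluded term $-\bar c=\res_{\bar\ft}(k)$, so after reindexing the inner product by $d:=-\bar c$ the product becomes
$$\prod_{k=1}^n \prod_{\mcr(k)\ni d\neq \res_{\bar\ft}(k)} \frac{x_k-d}{\res_{\bar\ft}(k)-d} \;=\; F_{\bar\ft}.$$

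The only subtle point, and therefore the one I would be most careful about, is that all of this must be done inside $\sh_{\mathscr{Z}_P}$ where $\bar{\,\,}$ is a genuine $\bz$-linear ring involution (since $q_1,\dots,q_m$ are indeterminates there), rather than inside the specialization $\sh$ itself, where bar on scalars is generally ill-defined. Once the identity is established in $\sh_{\mathscr{Z}_P}$, the statement of the proposition follows verbatim. Apart from this bookkeeping, the argument is a clean two-line cancellation combined with the reindexing bijection, with no genuine obstacle.
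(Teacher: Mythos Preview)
Your proof is correct and follows essentially the same approach as the paper: a direct computation on the product formula for $F_\ft$, using $\overline{x_k}=-x_k$ and Lemma~\ref{res'} to transform each factor, then reindexing. The only cosmetic difference is that the paper reindexes the inner product by passing through standard tableaux $\fu\mapsto\bar\fu$ (writing $c=\res_\fu(k)$ and using that conjugation is a bijection on standard tableaux), whereas you reindex directly via the residue map $c\mapsto -\bar c$; both arguments amount to the observation that $\overline{\mcr(k)}=-\mcr(k)=\mcr(k)$.
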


\begin{proof}Note that $\overline{\mcr(k)}=\mcr(k)$ and $\res_{\fu}(k)\neq \res_{\ft}(k)$ if and only if
$\res_{\bar{\fu}}(k)\neq \res_{\bar{\ft}}(k)$. Now
applying the definitions together with Lemma~\ref{res'} gives
\begin{align*}
\overline{F_\ft}&=\overline{\prod_{k=1}^n
   \prod_{\substack{c\in\mcr(k)\\c\ne\res_\ft(k)}}
\frac{x_k-c}{\res_\ft(k)-c}}\\
       &=\overline{\prod_{k=1}^n\prod_{\substack{\mu\in\mpn,\fu\in\std(\mu)\\\res_{\fu}(k)\ne\res_{\ft}(k)}}
  \frac{x_k-\res_{\fu}(k)}{\res_{\ft}(k)-\res_{\fu}(k)}}\\
  &=\prod_{k=1}^n\prod_{\substack{\mu\in\mpn,\fu\in\std(\mu)\\\res_{\fu}(k)\ne\res_{\ft}(k)}}
  \frac{\overline{x_k}-\overline{\res_{\fu}(k)}}{\overline{\res_{\ft}(k)}-\overline{\res_{\fu}(k)}}\\
  &=\prod_{k=1}^n\prod_{\substack{\mu\in\mpn,\fu\in\std(\mu)\\\res_{\bar{\fu}}(k)\ne\res_{\bar{\ft}}(k)}}
  \frac{x_k-\res_{\bar{\fu}}(k)}{\res_{\bar{\ft}}(k)-\res_{\overline{\fu}}(k)}\\
      &=\prod_{k=1}^n\prod_{\substack{c\in\mcr(k)\\c\ne\res_{\bar{\ft}}(k)}}
  \frac{x_k-c}{\res_{\bar{\ft}}(k)-c}
       =F_{\bar{\ft}}.
\end{align*}
\end{proof}

Now we let $g_{\fs\ft}=F_{\bar{\fs}}n_{\fs\ft}F_{\bar{\ft}}$;
then in $\sh_{\mathscr{Z}_p}$, $g_{\fs\ft}=\overline{F}_\fs \overline{m}_{\fs\ft}\overline{F}_\ft=\overline{f}_{\fs\ft}$.
Applying $\bar{\,}$ to $\{f_{\fs\ft}\}$ and using Theorem~\ref{Them orthogonal basis}(ii) (and a
specialization argument) shows that
$\{g_{\fs\ft}|\fs,\ft\in\std(\lambda), \for \lambda\in\mpn\}$ is a
basis of $\sh$. Consequently, as in Corollary~\ref{S=fH},
$\tilde S^\lambda\cong
\sh g_{\fs\ft}$ for any standard $\lambda$-tableaux
$\fs,\ft\in\std(\lambda)$.

\begin{remark}
By the Proposition and Theorem~\ref{Them idempotents}(i),
$$g_{\ft\ft}=\overline{f_{\ft\ft}}=\overline{\gamma_\ft F_\ft}=\overline{\gamma_\ft} F_{\bar{\ft}}
         =\frac {\overline{\gamma_\ft}}{\gamma_{\bar{\ft}}}f_{\bar{\ft}\bar{\ft}}.$$
More generally, we can write $g_{\fs\ft}=\sum_{\fa,\fb}r_{\fa\fb}f_{\fa\fb}$
for some $r_{\fa\fb}\in R$. By Propositions~\ref{Prop-f_st properties} and \ref{F'},
$F_{\bar{\fs}}g_{\fs\ft}F_{\bar{\ft}}=g_{\fs\ft}$; so it follows that $r_{\fa\fb}=0$
unless $\fa=\bar{\fs}$ and $\fb=\bar{\ft}$. Therefore,
$g_{\fs\ft}=\alpha_{\fs\ft}f_{\bar{\fs}\bar{\ft}}$ for some $\alpha_{\fs\ft}\in R$.
Applying the $*$-involution shows that $\alpha_{\fs\ft}=\alpha_{\ft\fs}$.
Finally, by looking at the product $g_{\fs\ft}g_{\ft\fs}$ we see that
$\alpha_{\fs\ft}^2=\overline{\gamma_\fs}\overline{\gamma_\ft}/\gamma_{\bar{\fs}}\gamma_{\bar{\ft}}$.
\label{g_st to f_st}\end{remark}

Combining Proposition~\ref{F'} with Corollary~\ref{S=fH} and the corresponding result for the
$g$-basis shows that
$ S^\lambda\cong \sh f_{\ft\ft}=\sh g_{\bar{\ft}\bar{\ft}}\cong\tilde S^{\bar{\gl}}$, for any
$\ft\in\std(\lambda)$. Hence, we have the following.

\begin{corollary}
Assume that  Assumption~\ref{ppoly} holds. Then
$\tilde S^\lambda\cong S^{\bar{\gl}}$.
\label{dual isomorphism}\end{corollary}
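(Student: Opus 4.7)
The plan is to read off the isomorphism from the two ``parallel'' cellular structures on $\sh$: the $f$-basis realizing the Specht modules $S^\lambda$ and the $g$-basis realizing the dual Specht modules $\tilde S^\lambda$, connected via the bar involution of $\sh_{\mathscr{Z}_P}$ together with the tableau conjugation $\ft\mapsto\bar\ft$. The whole argument is a short chain of identifications, and in fact the sentence immediately preceding the statement already sketches it; my job is simply to make each link explicit.

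First, I would record the $g$-basis analogue of Corollary~\ref{S=fH}: for any $\ft\in\std(\lambda)$ one has $\tilde S^\lambda\cong \sh g_{\ft\ft}$. This follows by exactly the same proof as Corollary~\ref{S=fH}, because applying the $\bz$-linear involution $\bar{\,\,}$ to the $f$-basis yields (after specialization) the $g$-basis, which, just like $\{f_{\fs\ft}\}$, is a cellular basis for $\sh$ in the sense of Definition~\ref{Def-Cellular-algebras}, with the roles of $m_{\fs\ft}$ replaced by $n_{\fs\ft}$ and $S^\lambda$ replaced by $\tilde S^\lambda$.

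Next, I would invoke the Remark on $g_{\fs\ft}$ just above: writing $g_{\ft\ft}=\alpha_{\ft\ft} f_{\bar\ft\bar\ft}$ with $\alpha_{\ft\ft}^2=\overline{\gamma_\ft}^{\,2}/\gamma_{\bar\ft}^{\,2}$. Under Assumption~\ref{ppoly} every $\gamma_\fu$ is a unit of $R$ (this is what Theorem~\ref{gamma properties} ultimately gives through the factorization $P_\sh(Q)$), so $\alpha_{\ft\ft}$ is invertible. Consequently $\sh g_{\ft\ft}=\sh f_{\bar\ft\bar\ft}$ as left ideals of $\sh$.

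Finally, since $\bar\ft\in\std(\bar\lambda)$, Corollary~\ref{S=fH} applied to the conjugate multipartition $\bar\lambda$ yields $\sh f_{\bar\ft\bar\ft}\cong S^{\bar\lambda}$. Chaining, $\tilde S^\lambda\cong \sh g_{\ft\ft}=\sh f_{\bar\ft\bar\ft}\cong S^{\bar\lambda}$, which is the desired assertion. The only genuine point to verify carefully—and hence the main obstacle, though a modest one—is that the $g$-basis really does yield a cellular structure indexed by $(\lambda,\bar\ft)$ rather than $(\lambda,\ft)$; this is precisely what Proposition~\ref{F'} and Lemma~\ref{res'} are designed to ensure, since they show that the idempotent framing of $g_{\fs\ft}=F_{\bar\fs}n_{\fs\ft}F_{\bar\ft}$ is compatible with conjugation of tableaux, and everything downstream then follows by the specialization $\sh\cong\sh_{\mathscr{Z}_P}\otimes_{\mathscr{Z}_P}R$.
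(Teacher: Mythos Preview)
Your proposal is correct and follows essentially the same route as the paper: both arguments identify $\tilde S^\lambda$ and $S^{\bar\lambda}$ with the same principal left ideal of $\sh$ by combining Corollary~\ref{S=fH} (and its $g$-basis analogue) with the relation $g_{\ft\ft}=\frac{\overline{\gamma_\ft}}{\gamma_{\bar\ft}}f_{\bar\ft\bar\ft}$ from Remark~\ref{g_st to f_st}, the scalar being invertible under Assumption~\ref{ppoly}. The paper phrases the chain as $S^\lambda\cong\sh f_{\ft\ft}=\sh g_{\bar\ft\bar\ft}\cong\tilde S^{\bar\lambda}$ and you run it from the other end, but the content is identical.
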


\begin{remark}
When $R$ is field the assumption that Assumption~\ref{ppoly} holds is
equivalent to $\sh$ being semisimple. This assumption is necessary
because, in general, $S^{\bar{\gl}}$ and $\tilde S^\lambda$ are not
isomorphic; rather, we can show that
$S^{\bar{\gl}}$ is isomorphic to the {\it dual} of $\tilde S^\lambda$, the detail will be appear elsewhere.
 In the semisimple case both
$S^\lambda$ and $\tilde S^{\bar{\gl}}$ are irreducible, and hence
self-dual, since they carry a non-degenerate bilinear form.
Accordingly, we call the module $\tilde S^\lambda$ the dual Specht
module.\end{remark}

\begin{corollary} Let $\lambda$ and $\mu$ be $m$-multipartitions of $n$.
Suppose that $\fs$ and $\ft$ are standard $\lambda$-tableaux and that
$\fa$ and $\fb$ are standard $\mu$-tableaux. Then $f_{\fs\ft}g_{\fa\fb}=\gd_{\ft\bar{\fa}}r_{\ft\fb}f_{\fs\bar{\fb}}$
for some $r_{\ft\fb}\in R$.
\label{cancellation}
\end{corollary}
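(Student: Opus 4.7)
The plan is to reduce the computation to the orthogonality relation for the $f$-basis (Theorem~\ref{Them orthogonal basis}(i)) via the comparison identity
$g_{\fa\fb}=\alpha_{\fa\fb}f_{\bar{\fa}\bar{\fb}}$ already established in Remark~\ref{g_st to f_st} for some $\alpha_{\fa\fb}\in R$ (obtained by specialization from $\sh_{\mathscr{Z}_P}$, so implicitly requiring $P_\sh(Q)$ to be invertible, which is the standing semisimple assumption under which the $f$-basis itself is defined). Substituting this identity yields
$$f_{\fs\ft}\,g_{\fa\fb}=\alpha_{\fa\fb}\,f_{\fs\ft}f_{\bar{\fa}\bar{\fb}}.$$

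First I would observe that the product on the right lives in the setting of Theorem~\ref{Them orthogonal basis}(i): since $\fa$ has shape $\mu$, the tableau $\bar{\fa}$ has shape $\bar{\mu}$, and the theorem gives $f_{\fs\ft}f_{\bar{\fa}\bar{\fb}}=\delta_{\ft\bar{\fa}}\gamma_{\ft}f_{\fs\bar{\fb}}$. The Kronecker delta forces both the shape match $\lambda=\bar{\mu}$ and the tableau equality $\ft=\bar{\fa}$ (equivalently $\fa=\bar{\ft}$). Combining these gives
$$f_{\fs\ft}\,g_{\fa\fb}=\delta_{\ft\bar{\fa}}\,\alpha_{\fa\fb}\,\gamma_{\ft}\,f_{\fs\bar{\fb}},$$
so setting $r_{\ft\fb}:=\alpha_{\bar{\ft}\fb}\gamma_{\ft}\in R$ (which depends only on $\ft$ and $\fb$ once the delta has pinned $\fa=\bar{\ft}$) produces the claimed formula. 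The statement is essentially a bookkeeping consequence of the comparison identity between the $f$- and $g$-bases together with the orthogonality of matrix-unit-type products, so there is no serious obstacle; the only delicate point is verifying that the scalar $r_{\ft\fb}$ genuinely depends only on the indices stated in the corollary, which is exactly the effect of the delta factor.
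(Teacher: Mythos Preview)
Your argument is correct and essentially the same as the paper's: both hinge on Remark~\ref{g_st to f_st} (the identity $g_{\fa\fb}=\alpha_{\fa\fb}f_{\bar\fa\bar\fb}$) together with the multiplication rule $f_{\fs\ft}f_{\bar\fa\bar\fb}=\delta_{\ft\bar\fa}\gamma_\ft f_{\fs\bar\fb}$ from Theorem~\ref{Them orthogonal basis}(i). The only cosmetic difference is that the paper first unfolds $g_{\fa\fb}=\overline{F}_\fa\,\overline{m}_{\fa\fb}\,\overline{F}_\fb$ and uses Proposition~\ref{F'} plus idempotent orthogonality to extract the factor $\delta_{\ft\bar\fa}$, whereas you apply the packaged identity from the remark immediately; your route is slightly more streamlined but not genuinely different.
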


\begin{proof}
Applying the definitions and Remark~\ref{g_st to f_st},
$$f_{\fs\ft}g_{\fa\fb}
   =F_\fs m_{\fs\ft}F_\ft \bar{F}_\fa \bar{m}_{\fa\fb}\bar{F}_\fb=F_\fs m_{\fs\ft}
   F_\ft F_{\bar{\fa}} \overline{m}_{\fa\fb}\bar{F}_\fb=\delta_{\ft\bar{\fa}}F_\fs
   m_{\fs\ft}F_\ft\overline{m}_{\fa\fb}\bar{F}_\fb=\gd_{\ft\bar{\fa}}r_{\ft\fb}f_{\fs\bar{\fb}}$$
  for some $r_{\ft\fb}\in R$.  It completes the proof. \end{proof}

The Specht modules $S^\lambda$ and the dual Specht modules $\tilde
S^\lambda$ are both constructed as quotient modules using the cellular bases
$\{m_{\fs\ft}\}$ and $\{n_{\fs\ft}\}$ respectively (see Corollary~\ref{S=fH}).
Using the orthogonal basis $\{f_{\fs\ft}\}$ and $\{g_{\fs\ft}\}$
we have also constructed these modules as submodules of $\sh$.

\vspace{2\jot}
Recall that $\ft^{\gl}$ is the $\lambda$-tableau which has the numbers
$1,2,\dots,n$ entered in order first along the rows of $\ft^{\gl^1}$
and then the rows of $\ft^{\gl^2}$ and so on. Let
$\ft_{\gl}=\overline{\ft^{\bar{\gl}}}$, that is,  $\ft_{\gl}$ is the
$\lambda$-tableau with the numbers $1,2,\dots,n$ entered in order
first down the columns of $\ft_{\gl^m}$ and then the columns of
$\ft_{\gl^{m-1}}$ etc,  see Example~\ref{Example t^lamda t_lamda}. Observe that if $\ft$ is a standard
$\lambda$-tableau then $\ft^\gl\unrhd\ft\unrhd\ft_{\gl}$.

\begin{proposition}
Suppose that $\lambda$ be an
$m$-multipartition of $n$. Then $m_\lambda\sh  n_{\bar{\gl}}=Rf_{\ft^{\gl}\ft_{\gl}}$.
\label{Prop m_lamda n_bar lamda}
\end{proposition}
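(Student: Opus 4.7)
The plan is to expand $m_\lambda$, $n_{\bar\lambda}$, and $f_{\mathfrak{t}^\lambda\mathfrak{t}_\lambda}$ in terms of the orthogonal basis $\{f_{\mathfrak{s}\mathfrak{t}}\}$ of Theorem~\ref{Them orthogonal basis} together with its dual partner $\{g_{\mathfrak{s}\mathfrak{t}}\}$, and then squeeze the proposition out of the ``cancellation'' identity of Corollary~\ref{cancellation}. The first preparatory observation I would make is that $m_\lambda = f_{\mathfrak{t}^\lambda\mathfrak{t}^\lambda}$ and $n_{\bar\lambda} = g_{\mathfrak{t}^{\bar\lambda}\mathfrak{t}^{\bar\lambda}}$: indeed, by Proposition~\ref{Prop-f_st properties}(i) we have $f_{\mathfrak{t}^\lambda\mathfrak{t}^\lambda} = m_\lambda + \sum_{(\mathfrak{a},\mathfrak{b})\rhd(\mathfrak{t}^\lambda,\mathfrak{t}^\lambda)} r_{\mathfrak{a}\mathfrak{b}} m_{\mathfrak{a}\mathfrak{b}}$, and the sum is empty because $\mathfrak{t}^\lambda$ is the $\unrhd$-maximal standard $\lambda$-tableau; then applying the bar involution and using Lemma~\ref{Lem bar} gives $n_{\bar\lambda} = \overline{m_{\bar\lambda}} = \overline{f_{\mathfrak{t}^{\bar\lambda}\mathfrak{t}^{\bar\lambda}}} = g_{\mathfrak{t}^{\bar\lambda}\mathfrak{t}^{\bar\lambda}}$ (via Proposition~\ref{F'} and the definition of $g_{\mathfrak{s}\mathfrak{t}}$).

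For the inclusion $m_\lambda\sh n_{\bar\lambda}\subseteq R f_{\mathfrak{t}^\lambda\mathfrak{t}_\lambda}$, I would write an arbitrary $h\in\sh$ in the basis $\{f_{\mathfrak{a}\mathfrak{b}}\}$. Theorem~\ref{Them orthogonal basis}(i) yields
\[
  f_{\mathfrak{t}^\lambda\mathfrak{t}^\lambda}\, f_{\mathfrak{a}\mathfrak{b}} = \delta_{\mathfrak{t}^\lambda\mathfrak{a}}\,\gamma_{\mathfrak{t}^\lambda}\, f_{\mathfrak{t}^\lambda\mathfrak{b}},
\]
so $m_\lambda h$ is an $R$-linear combination of the vectors $f_{\mathfrak{t}^\lambda\mathfrak{b}}$ with $\mathfrak{b}\in\std(\lambda)$. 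Next, Corollary~\ref{cancellation} gives
\[
  f_{\mathfrak{t}^\lambda\mathfrak{b}}\, g_{\mathfrak{t}^{\bar\lambda}\mathfrak{t}^{\bar\lambda}} = \delta_{\mathfrak{b},\,\overline{\mathfrak{t}^{\bar\lambda}}}\, r_{\mathfrak{b}\,\mathfrak{t}^{\bar\lambda}}\, f_{\mathfrak{t}^\lambda,\,\overline{\mathfrak{t}^{\bar\lambda}}} = \delta_{\mathfrak{b}\mathfrak{t}_\lambda}\, r_\mathfrak{b}\, f_{\mathfrak{t}^\lambda\mathfrak{t}_\lambda},
\]
since $\overline{\mathfrak{t}^{\bar\lambda}}=\mathfrak{t}_\lambda$ by definition. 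Concatenating the two displays shows that $m_\lambda h n_{\bar\lambda}$ is an $R$-multiple of $f_{\mathfrak{t}^\lambda\mathfrak{t}_\lambda}$ for every $h$.

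For the reverse inclusion, I would verify directly that $f_{\mathfrak{t}^\lambda\mathfrak{t}_\lambda}$ is itself a unit multiple of an element of $m_\lambda\sh n_{\bar\lambda}$. Because $d(\mathfrak{t}^\lambda)=1$, we have $f_{\mathfrak{t}^\lambda\mathfrak{t}_\lambda} = F_{\mathfrak{t}^\lambda}\, m_\lambda\, d(\mathfrak{t}_\lambda)\, F_{\mathfrak{t}_\lambda}$, and Proposition~\ref{Prop-f_st properties}(ii) applied to $f_{\mathfrak{t}^\lambda\mathfrak{t}^\lambda}=m_\lambda$ gives $F_{\mathfrak{t}^\lambda} m_\lambda = m_\lambda$, so $f_{\mathfrak{t}^\lambda\mathfrak{t}_\lambda} = m_\lambda\, d(\mathfrak{t}_\lambda)\, F_{\mathfrak{t}_\lambda}$. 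Combining Proposition~\ref{F'} with Theorem~\ref{Them idempotents}(i) and the first paragraph,
\[
  F_{\mathfrak{t}_\lambda} = \overline{F_{\mathfrak{t}^{\bar\lambda}}} = \overline{\gamma_{\mathfrak{t}^{\bar\lambda}}^{-1} f_{\mathfrak{t}^{\bar\lambda}\mathfrak{t}^{\bar\lambda}}} = \overline{\gamma_{\mathfrak{t}^{\bar\lambda}}}^{\,-1}\, n_{\bar\lambda},
\]
which under Assumption~\ref{ppoly} lives in $\sh$ and is a unit multiple of $n_{\bar\lambda}$. Hence $f_{\mathfrak{t}^\lambda\mathfrak{t}_\lambda} = \overline{\gamma_{\mathfrak{t}^{\bar\lambda}}}^{\,-1}\, m_\lambda\, d(\mathfrak{t}_\lambda)\, n_{\bar\lambda} \in m_\lambda\sh n_{\bar\lambda}$, completing the equality.

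The main obstacle I expect is the bookkeeping around the bar-involution: one has to be careful that all manipulations take place in (a localization of) $\sh_{\mathscr{Z}}$ so that $\bar{\,}$ and specialization commute, and that the scalar $\overline{\gamma_{\mathfrak{t}^{\bar\lambda}}}$ is genuinely a unit in $R$ under Assumption~\ref{ppoly}. Everything else reduces to the cellular/orthogonal basis calculus already developed in Sections~\ref{Sec: Cellular basis} and~\ref{sec: dual Spechts}.
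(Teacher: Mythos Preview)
Your argument has a genuine gap: the ``preparatory observation'' that $m_\lambda = f_{\ft^\lambda\ft^\lambda}$ (and, by the same token, $n_{\bar\lambda}=g_{\ft^{\bar\lambda}\ft^{\bar\lambda}}$) is false. In Proposition~\ref{Prop-f_st properties}(i) the sum $\sum_{(\fa,\fb)\rhd(\ft^\lambda,\ft^\lambda)} r_{\fa\fb}m_{\fa\fb}$ ranges over \emph{all} standard tableaux, not just those of shape $\lambda$; while $\ft^\lambda$ is indeed maximal among standard $\lambda$-tableaux, there are standard $\mu$-tableaux with $\mu\rhd\lambda$ that dominate $\ft^\lambda$. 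For a concrete counterexample take $m=1$, $n=2$, $\lambda=(1,1)$: then $m_\lambda=1$ but $f_{\ft^\lambda\ft^\lambda}=\tfrac{1}{2}(1-s_1)$, the missing term being $\tfrac{1}{2}f_{\ft^\mu\ft^\mu}$ with $\mu=(2)$. Both halves of your argument rely on these identities: in the forward direction you conclude that $m_\lambda h$ lies in $\sum_\fb Rf_{\ft^\lambda\fb}$, which is not true; in the reverse direction you deduce $F_{\ft^\lambda}m_\lambda=m_\lambda$ and $F_{\ft_\lambda}=\overline{\gamma_{\ft^{\bar\lambda}}}^{\,-1}n_{\bar\lambda}$, both of which fail (e.g.\ for $\lambda=(2)$ one has $F_{\ft_\lambda}=\tfrac{1}{2}(1+s_1)$ but $\overline{\gamma_{\ft^{\bar\lambda}}}^{\,-1}n_{\bar\lambda}=1$).

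The paper's proof confronts exactly this issue: it writes $m_\lambda=f_{\ft^\lambda\ft^\lambda}+\sum_{\fa,\fb\rhd\ft^\lambda}r_{\fa\fb}f_{\fa\fb}$ and $n_{\bar\lambda}=\tfrac{\overline{\gamma_{\ft^{\bar\lambda}}}}{\gamma_{\ft_\lambda}}f_{\ft_\lambda\ft_\lambda}+\sum_{\ft_\lambda\rhd\bar\ga,\bar\gb}r_{\ga\gb}g_{\ga\gb}$, keeping the higher terms, and then kills them by a squeeze: the extra terms in $m_\lambda$ force $\shape(\fb)\rhd\lambda$, while the extra terms in $n_{\bar\lambda}$ force $\shape(\bar\ga)\lhd\lambda$, so in $f_{\fa\fb}f_{\fs\ft}g_{\ga\gb}$ one cannot have $\fb=\fs$ and $\ft=\bar\ga$ simultaneously. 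Your overall strategy (expand in the orthogonal basis and apply Corollary~\ref{cancellation}) is the right one, but the extra terms must be carried and then eliminated by this dominance argument rather than declared absent.
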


\begin{proof}
By Proposition~\ref{Prop-f_st properties}(i),
$m_\lambda=f_{\ft^{\gl}\ft^{\gl}}+\sum_{\fa,\fb\rhd\ft^{\gl}}r_{\fa\fb}f_{\fa\fb}$ for
some $r_{\fa\fb}\in R$. By interchanging the roles of $\lambda$
and $\bar{\gl}$ and by applying the involution $\bar{\,}$ (in $\sh_{\mathscr{Z}_p}$ and
then specializing) we see that there exist $r_{\ga\gb}\in R$ such that
$$(\dag)\quad\quad n_{\bar{\gl}}=g_{\ft^{\bar{\gl}}\ft^{\bar{\gl}}}
             +\sum_{\ga,\gb\rhd\ft^{\bar{\gl}}}r_{\ga\gb}g_{\ga\gb}
      =\frac{\overline{\gamma_{\ft^{\bar{\gl}}}}}{\gamma_{\ft_{\gl}}}f_{\ft_{\gl}\ft_{\gl}}
             +\sum_{\ft_{\gl}\rhd\bar{\ga},\bar{\gb}}r_{\ga\gb}g_{\ga\gb}
$$
where for the second equality we have used Remark~\ref{g_st to f_st} (note
that $\overline{\ft^{\bar{\gl}}}=\ft_{\gl}$) and the observation that
$\ga,\gb\rhd\ft^{\bar{\gl}}$ if and only if $\ft_{\gl}\rhd\bar{\ga},\bar{\gb}$.  Now
$m_\lambda\sh  n_{\bar{\gl}}$ is spanned by the elements $m_\lambda f_{\fs\ft}
n_{\bar{\gl}}$, where $\fs$ and $\ft$ range over all pairs of standard
tableaux of the same shape. Now, $(\dag)$ and
Corollary~\ref{cancellation} imply that
\begin{align*}
m_{\lambda} f_{\fs \ft} n_{\bar{\gl}}
&=\biggl(f_{{\ft^\gl}{\ft^\gl}}+\sum_{\fa,\fb\rhd\ft^{\gl}}r_{\fa\fb}f_{\fa\fb}\biggr)f_{\fs\ft}
\biggl(\frac{\overline{\gamma_{\ft^{\bar{\gl}}}}}{\gamma_{\ft_{\gl}}}f_{\ft_{\gl}\ft_{\gl}}+
\sum_{\ft_{\gl}\rhd\bar{\ga},\bar{\gb}}r_{\ga\gb}g_{\ga\gb}\biggr)\\
&=\frac{\overline{\gamma_{\ft^{\bar{\gl}}}}}{\gamma_{\ft_{\gl}}}
f_{\ft^{\gl}\ft^{\gl}}f_{\fs\ft}f_{\ft_{\gl}\ft_{\gl}}
        +\sum_{\substack{\ft_{\gl}\rhd\bar{\ga},\bar{\gb}\\
        \fa,\fb\rhd\ft^{\gl}}}r_{\fa\fb}r_{\ga\gb}f_{\fa\fb}f_{\fs\ft}g_{\ga\gb},\\
&=\frac{\overline{\gamma_{\ft^{\bar{\gl}}}}}{\gamma_{\ft_{\gl}}}f_{\ft^{\gl}\ft^{\gl}}
f_{\fs\ft}f_{\ft_{\gl}\ft_{\gl}}\\
&=\begin{cases}\gamma_{\ft^{\gl}}\overline{\gamma_{\ft^{\bar{\gl}}}}f_{\ft^{\gl}\ft_{\gl}},
&\If\fs=\ft^{\gl}\And\ft=\ft_{\gl},\\
	             0,&\otherwise,
\end{cases}
\end{align*}
With the third equality following the facts that $f_{\fa\fb}f_{\fs\ft}
g_{\ga\gb}=\gd_{\fb\fs}\gd_{\ft\bar{\fa}}rf_{\fa\bar{\gb}}$ for some $r\in R$ and that
$\ft_{\gl}\rhd \bar{\ga}=\ft\rhd \ft_{\gl}$ is impossible.
\end{proof}

Let $w_\lambda=d(\ft_{\gl})$. Then $w_\lambda$ is the unique element of
$S_n$ such that $\ft_{\gl}=\ft^{\gl} w_\lambda$.

\begin{definition}
Suppose that $\lambda$ is an $m$-multipartition of $n$. Let
$z_{\lambda}=m_{\lambda} w_\lambda n_{\bar{\gl}}$.
\label{Def Z_lambda}\end{definition}
The element $z_\lambda$ and the following result are crucial to our
computation of the Schur elements.

\begin{proposition}
Assume that Assumption~\ref{ppoly} holds. Then
$z_\lambda\!\!=\!\!\overline{\gamma_{\ft^{\bar{\gl}}}}f_{\ft^{\gl}\ft_{\gl}}$ and
$m_\lambda\sh n_{\bar{\gl}}\!\!=\!Rz_\lambda$.
\label{z=f}
\end{proposition}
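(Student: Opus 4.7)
The plan is to collapse each of $m_\lambda$ and $n_{\bar{\lambda}}$ to a scalar multiple of a primitive idempotent, then recognize the middle product $F_{\ft^{\lambda}}w_\lambda F_{\ft_{\lambda}}$ as $\gamma_{\ft^{\lambda}}^{-1}f_{\ft^{\lambda}\ft_{\lambda}}$, and finally invoke Proposition~\ref{Prop m_lamda n_bar lamda} for the ideal identification.

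For the first collapse, because $\ft^{\lambda}$ is the unique $\unrhd$-maximal standard $\lambda$-tableau, the correction sum in Proposition~\ref{Prop-f_st properties}(i) applied to $(\fs,\ft)=(\ft^{\lambda},\ft^{\lambda})$ is empty, which gives $m_\lambda=f_{\ft^{\lambda}\ft^{\lambda}}=\gamma_{\ft^{\lambda}}F_{\ft^{\lambda}}$ by Theorem~\ref{Them idempotents}(i). The same argument applied to $\bar{\lambda}$ yields $m_{\bar{\lambda}}=\gamma_{\ft^{\bar{\lambda}}}F_{\ft^{\bar{\lambda}}}$ in $\sh_{\mathscr{Z}_P}$; applying the involution $\bar{\,}$ and using Proposition~\ref{F'} together with $\overline{\ft^{\bar{\lambda}}}=\ft_{\lambda}$ gives $n_{\bar{\lambda}}=\overline{m_{\bar{\lambda}}}=\overline{\gamma_{\ft^{\bar{\lambda}}}}\,F_{\ft_{\lambda}}$ upon specializing back to $\sh$.

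Substituting these into $z_\lambda=m_\lambda w_\lambda n_{\bar{\lambda}}$ reduces the problem to evaluating $F_{\ft^{\lambda}}w_\lambda F_{\ft_{\lambda}}$. Since $w_\lambda=d(\ft_{\lambda})$ and $d(\ft^{\lambda})=1$, one has $m_{\ft^{\lambda}\ft_{\lambda}}=d(\ft^{\lambda})^*m_\lambda d(\ft_{\lambda})=m_\lambda w_\lambda$, so Definition~\ref{def F_t f_st} combined with $F_{\ft^{\lambda}}^2=F_{\ft^{\lambda}}$ yields
\[ f_{\ft^{\lambda}\ft_{\lambda}}=F_{\ft^{\lambda}}m_\lambda w_\lambda F_{\ft_{\lambda}}=\gamma_{\ft^{\lambda}}F_{\ft^{\lambda}}w_\lambda F_{\ft_{\lambda}}. \]
Rearranging then delivers $z_\lambda=\overline{\gamma_{\ft^{\bar{\lambda}}}}\,f_{\ft^{\lambda}\ft_{\lambda}}$.

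The second assertion then follows from Proposition~\ref{Prop m_lamda n_bar lamda}, provided $\overline{\gamma_{\ft^{\bar{\lambda}}}}$ is a unit in $R$; this holds because, by Theorem~\ref{gamma properties}, $\gamma_{\ft^{\bar{\lambda}}}$ is a product of residue-difference factors of the form $j-i+q_s-q_t$ with $s<t$, and $\bar{\,}$ converts each such factor into one of the same form with indices suitably permuted, so every factor still divides $P_\sh(Q)$ and is invertible under Assumption~\ref{ppoly}. I expect no substantive obstacle here; the main care needed is bookkeeping the interplay between the involution $\bar{\,}$ on $\sh_{\mathscr{Z}_P}$ and the specialization $\sh_{\mathscr{Z}_P}\to\sh$, which Proposition~\ref{F'} packages cleanly through the identity $\overline{F_{\ft^{\bar{\lambda}}}}=F_{\ft_{\lambda}}$.
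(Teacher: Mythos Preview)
Your argument has a genuine gap at the very first step. The claim that the correction sum in Proposition~\ref{Prop-f_st properties}(i) is empty for $(\fs,\ft)=(\ft^{\lambda},\ft^{\lambda})$ is false: that sum runs over pairs $(\fa,\fb)$ of standard tableaux of \emph{any} shape with $(\fa,\fb)\rhd(\ft^{\lambda},\ft^{\lambda})$, and such pairs exist whenever there is a multipartition $\mu\rhd\lambda$. In general $m_\lambda\neq f_{\ft^{\lambda}\ft^{\lambda}}$; equivalently $m_\lambda\neq\gamma_{\ft^{\lambda}}F_{\ft^{\lambda}}$. For a concrete counterexample take $m=1$, $n=2$, $q_1=0$ and $\lambda=(1^2)$: then $m_\lambda=1$ while $f_{\ft^{\lambda}\ft^{\lambda}}=F_{\ft^{\lambda}}=\tfrac{1}{2}(1-s_1)$, the difference $\tfrac{1}{2}(1+s_1)$ lying in $\sh^{\rhd\lambda}$. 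The same objection applies to your identity $n_{\bar{\lambda}}=\overline{\gamma_{\ft^{\bar{\lambda}}}}\,F_{\ft_{\lambda}}$.

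What survives of your idea is the weaker statement that $F_{\ft_{\lambda}}n_{\bar{\lambda}}=\overline{\gamma_{\ft^{\bar{\lambda}}}}\,F_{\ft_{\lambda}}$ (and its $*$-analogue), since the correction terms $g_{\alpha\beta}$ in the expansion $(\dag)$ of $n_{\bar{\lambda}}$ satisfy $\bar{\alpha}\lhd\ft_{\lambda}$ and are therefore killed by $F_{\ft_{\lambda}}$. But to exploit this you must first arrange for an $F_{\ft_{\lambda}}$ to appear next to $n_{\bar{\lambda}}$ in the product $m_{\ft^{\lambda}\ft_{\lambda}}n_{\bar{\lambda}}$, and that requires controlling the correction terms in the $f$-expansion of $m_{\ft^{\lambda}\ft_{\lambda}}$ as well. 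The paper does exactly this: it expands both factors, keeps the correction sums, and then shows the cross-terms vanish because the $\fb$-index on the left (which satisfies $\fb\unrhd\ft_{\lambda}$) can never match the $\bar{\alpha}$-index on the right (which satisfies $\bar{\alpha}\lhd\ft_{\lambda}$). Your shortcut of collapsing each factor to a scalar times an idempotent skips precisely this cancellation argument, and without it the computation is not valid.
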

\begin{proof}By definition and applying the equality~$(\dag)$ in the proof of Proposition~\ref{Prop m_lamda n_bar lamda}, \begin{align*}z_{\lambda}&= m_{\lambda} w_{\lambda} n_{\bar{\gl}}=m_{\ft^{\gl}
\ft_{\gl}}n_{\bar{\gl}}\\
&=\biggl(f_{\ft^{\gl}\ft_{\gl}}+\sum_{\fa,\fb\rhd\ft_{\gl}} r_{\fa\fb}f_{\fa\fb}\biggr)
\biggl(\frac{\overline{\gamma_{\ft^{\bar{\gl}}}}}{\gamma_{\ft_{\gl}}}f_{\ft_{\gl}\ft_{\gl}}+
\sum_{\ft_{\gl}\rhd\bar{\ga},\bar{\gb}}r_{\ga\gb}g_{\ga\gb}\biggr)\\ &=\frac{\overline{
\gamma_{\ft^{\bar{\gl}}}}}{\gamma_{\ft_{\gl}}}f_{\ft^{\gl}\ft_{\gl}}f_{\ft_{\gl}\ft_{\gl}}\\
&=\overline{\gamma_{\ft^{\bar{\gl}}}}f_{\ft^{\gl}\ft_{\gl}},
\end{align*}
where the last equality follows by Theorem~\ref{Them orthogonal basis}(i).
\end{proof}	

Now, $z_\lambda$ is an element of $\sh_\mathscr{Z}$, so
$\overline{\gamma_{\ft^{\bar{\gl}}}}f_{\ft^{\gl}\ft_{\gl}}\in\sh_\mathscr{Z}$.
By definition, $\sh z_\lambda$
is a quotient module of $\sh m_\lambda$ and a submodule of $\sh n_{\bar{\gl}}$.

\begin{remark}
Over an arbitrary ring $R$, along the line of Du and
Rui~\cite[Remark~2.5]{DuRui:branching}, we can  show that $S^\lambda\cong
\sh z_\lambda^*$ and $\tilde S^{\bar{\lambda}}\cong \sh z_\lambda$ as
$\sh$-modules, the isomorphisms being given by the natural quotient
maps $\sh m_\lambda \rr \sh z_\lambda^*$ and $\sh n_{\bar{\gl}}\rr \sh z_\lambda$.
Note that $S^\lambda\cong\tilde S^{\bar{\gl}}$ when $\sh$ is semisimple by
Corollary~\ref{S=fH}.\end{remark}

\section{Some nice primitive idempotents}\label{Sec: nice primitive idempotents}
In this section we give a simple formula for the primitive idempotents $F_{\ft^{\gl}}=
\frac{1}{\gma_{\ft^{\gl}}}f_{\ft^{\gl}}$.

\begin{proposition}
Suppose that $\ft$ is a standard $\lambda$-tableau. Then
there exist invertible elements $\Phi_\ft$ and $\Psi_\ft$ in the group algebra $RS_n$ such
that
\begin{enumerate}
\item $\Psi_\ft F_\ft=F_{\ft^{\gl}}\Phi_\ft$;
\item $\Phi_\ft=d(\ft)+\sum_{w<d(\ft)} r_{\ft w}w$, for some
$r_{\ft w}\in R$; and
\item $\gamma_{\ft^\gl}\Phi_\ft \Psi_\ft^*=\gamma_\ft$.
\end{enumerate}\label{Phi-Psi prop}
\end{proposition}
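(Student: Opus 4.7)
My plan is to proceed by induction on $\ell(d(\ft))$. The base case $\ft = \ft^\gl$ is trivial with $\Phi_{\ft^\gl} = \Psi_{\ft^\gl} = 1$. For the inductive step, the essential tool is the two-sided conjugation identity
\[
F_\ft = \frac{\gamma_\fs}{\gamma_\ft}(s_i - \alpha) F_\fs (s_i - \alpha),
\]
valid whenever $\fs = \ft(i,i+1) \rhd \ft$, where $\alpha = \frac{1}{\res_\ft(i) - \res_\fs(i)}$. I would derive this by starting from $s_i f_{\fs\fs} = \alpha f_{\fs\fs} + f_{\ft\fs}$ (Proposition~\ref{Prop-s_i f_su}), applying the $*$-involution to obtain $f_{\fs\ft} = f_{\fs\fs}(s_i - \alpha)$, and then using $f_{\ft\fs}f_{\fs\ft} = \gamma_\fs \gamma_\ft F_\ft$ together with $F_\fs^2 = F_\fs$ (both from Theorem~\ref{Them orthogonal basis}(i)). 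Combined with $\gamma_\ft/\gamma_\fs = 1 - \alpha^2$ (Theorem~\ref{gamma properties}(i.b)) and $(s_i+\alpha)(s_i-\alpha) = 1 - \alpha^2$, this also shows that $s_i \pm \alpha$ are invertible in $RS_n$ under Assumption~\ref{ppoly}.

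For the inductive step proper I would pick $i$ so that $\fs := \ft(i,i+1)$ is standard, $\fs \rhd \ft$, and $d(\ft) = d(\fs)s_i$ with $\ell(d(\ft)) = \ell(d(\fs)) + 1$; by induction $\Phi_\fs, \Psi_\fs$ exist. Set
\[
\Phi_\ft := \Phi_\fs (s_i - \alpha), \qquad \Psi_\ft := \Psi_\fs (s_i + \alpha),
\]
both invertible. Condition (i) is then immediate from
\[
\Psi_\ft F_\ft = \Psi_\fs(s_i + \alpha)\tfrac{\gamma_\fs}{\gamma_\ft}(s_i - \alpha)F_\fs(s_i - \alpha) = \Psi_\fs F_\fs (s_i - \alpha) = F_{\ft^\gl} \Phi_\fs (s_i - \alpha) = F_{\ft^\gl} \Phi_\ft,
\]
where the middle equality uses $(s_i + \alpha)(s_i - \alpha) = \gamma_\ft/\gamma_\fs$. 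For (ii), expanding $\Phi_\fs(s_i - \alpha)$ yields a leading term $d(\fs)s_i = d(\ft)$, while every other summand is supported on $S_n$-elements strictly below $d(\ft)$ in Bruhat order (invoking the subword property together with $\ell(d(\ft)) = \ell(d(\fs))+1$ to ensure $us_i < d(\ft)$ whenever $u < d(\fs)$). Condition (iii) follows from
\[
\Phi_\ft \Psi_\ft^* = \Phi_\fs(s_i-\alpha)(s_i+\alpha)\Psi_\fs^* = (1-\alpha^2)\,\Phi_\fs\Psi_\fs^*
\]
combined with $\gamma_{\ft^\gl}\Phi_\fs \Psi_\fs^* = \gamma_\fs$ (induction) and $\gamma_\ft = (1-\alpha^2)\gamma_\fs$.

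The main obstacle will be identifying the correct two-sided conjugate for $F_\ft$: once the symmetric form $(s_i-\alpha)F_\fs(s_i-\alpha)$ is isolated, the symmetric choice $\Psi_\ft = \Psi_\fs(s_i+\alpha)$ is essentially forced so that the factors $s_i \pm \alpha$ collapse into the scalar $1-\alpha^2$, delivering simultaneously the cancellation required for (i) and the correct $\gamma$-ratio required for (iii). After that, everything reduces to a bookkeeping exercise in the Bruhat order for (ii), and the induction runs itself.
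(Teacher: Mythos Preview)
Your proposal is correct and follows essentially the same approach as the paper: the same induction on dominance, the same recursive definitions $\Phi_\ft=\Phi_\fs(s_i-\alpha)$ and $\Psi_\ft=\Psi_\fs(s_i+\alpha)$, and the same use of $(s_i+\alpha)(s_i-\alpha)=1-\alpha^2=\gamma_\ft/\gamma_\fs$ to verify (i) and (iii). The only cosmetic difference is that the paper records the key step as the intertwining relation $(s_i+\alpha)F_\ft=F_\fs(s_i-\alpha)$ (derived directly from $f_{\fs\fs}(s_i-\alpha)=f_{\fs\ft}$ and $(s_i+\alpha)f_{\ft\ft}=(\gamma_\ft/\gamma_\fs)f_{\fs\ft}$), whereas you package the same identity as the two-sided conjugation $F_\ft=\tfrac{\gamma_\fs}{\gamma_\ft}(s_i-\alpha)F_\fs(s_i-\alpha)$; these are visibly equivalent.
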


\begin{proof}
We prove all three statements by induction on $\ft$ with respect to the dominance $\unrhd$. When
$\ft=\ft^\gl$ there is nothing to prove as we may take
$\Phi_{\ft^\gl}=\Psi_{\ft^\gl}=1$. Suppose then that $\ft\neq \ft^\gl$. Then there
exists an integer $i$, with $1\le i<n$, such that
$\fs=\ft(i,i+1)\rhd\ft$, or equivalently $\ft=\fs(i,i+1)\rhd \fs$.

Let
$\alpha=\frac{1}{\res_\ft(i)-\res_\fs(i)}$ and
$\beta=-\frac{1}{\res_\fs(i)-\res_\ft(i)}=-\ga$. Then $\ga+\gb=0$, $1+\ga\gb=\frac{\gma_{\ft}}{\gma_{\fs}}$, and $(s_i-\beta)f_{\ft\ft}=(\gamma_\ft/\gamma_\fs)f_{\fs\ft}$ according to Theorems~\ref{gamma properties}(i.b) and \ref{Prop-s_i f_su}. Similarly, by the right hand analogue of
Theorem~\ref{Tf multiplication}  (interchanging the roles of $\fs$ and $\ft$), $f_{\fs\fs}(s_i-\alpha)=f_{\fs\ft}$. Therefore,
\begin{align*}(\ddag)\quad\quad (s_i-\beta)F_\ft=\frac1{\gamma_\ft}(s_i-\beta)f_{\ft\ft}
                 =\frac1{\gamma_\fs}f_{\fs\ft}
		 =\frac1{\gamma_\fs}f_{\fs\fs}(s_i-\alpha)
		 =F_\fs(s_i-\alpha).\end{align*}
By induction, there exist invertible elements $\Phi_\fs$ and $\Psi_\fs$ in $RS_n$ satisfying properties (i)--(iii).

 Now define $\Psi_\ft=\Psi_\fs(s_i-\beta)$
and $\Phi_\ft=\Phi_\fs(s_i-\alpha)$. Then, by induction and the
equation $(\ddag)$,
$$ \Psi_\ft F_\ft=\Psi_\fs(s_i-\beta)F_\ft
                  =\Psi_\fs F_\fs(s_i-\alpha)
		  = F_{\ft^\gl}\Phi_\fs(s_i-\alpha)
		  = F_{\ft^\gl}\Phi_\ft.$$
Hence, (i) holds. Next, again by induction,
$$\Psi_\ft=\Psi_\fs(s_i-\beta)
         =\Big(d(\fs)+\sum_{v<d(\fs)} p_{\fs v}v\Big)(s_i-\beta)
         =d(\ft)+\sum_{w<d(\ft)} p_{\ft w}w,$$
by the standard properties of the Bruhat order since
$d(\ft)=d(\fs)s_i>d(\fs)$ according to Lemma~\ref{Lem Ehresmann}. Furthermore, by induction, $\Psi_{\ft}$ and
$\Phi_{\ft}$ are invertible because $s_i-\ga$ and $s_i-\gb$ are invertible in $RS_{n}$. (ii) is proved.

Finally, using
induction once more (and a quick calculation for the second
equality),
\begin{align*}
\gamma_{\ft^\gl}\Phi_\ft \Psi_\ft^*
    &=\gamma_{\ft^\gl}\Phi_\fs(s_i-\alpha)(s_i-\beta)\Psi_\fs^* \\
    &=\gamma_{\ft^\gl}\Phi_\fs\biggl(1+\alpha\beta-(\ga+\gb)s_i\biggr)\Psi_\fs^*\\
    &= \gamma_\ft,
\end{align*}
This proves (iii) and so completes the proof.
\end{proof}

\begin{remark}
We are not claiming that the elements $\Phi_\ft$ and $\Psi_\ft$ are
uniquely determined by the conditions of the Proposition; ostensibly,
these elements depend upon the choice of reduced expression for $d(\ft)$.
 Indeed, if $s_{i_1}\dots s_{i_l}$ is a reduced expression of $d(\ft)$,
then we may choose that
\begin{align*}\Psi_{\ft}=(s_{i_1}-\gb_{i_1})\dots (s_{i_l}-\gb_{i_l}) \quad\quad\text{ and }\quad\quad
\Phi_{\ft}=(s_{i_1}-\ga_{i_1})\dots (s_{i_l}-\ga_{i_l}),\end{align*}
 where $\ft_j=\ft(s_{i_1}\cdots s_{i_j})$, $\ga_{i_j}=\frac{1}{\res_{\ft_j}(i_j)-\res_{\ft_{j-1}}(i_j)}$, and $\gb_{i_j}=-\ga_{i_j}$.\end{remark}

\begin{corollary}
Suppose that $\ft$ is a standard $\lambda$-tableau.  Then
\begin{enumerate}
\item $F_{\ft^\gl}=\frac{\gamma_{\ft^{\gl}}}{\gamma_\ft}\Psi_\ft F_\ft\Psi_\ft^*$; and,
\item $F_\ft=\frac{\gamma_{\ft^{\gl}}}{\gamma_\ft}\Phi_\ft^* F_{\ft^{\gl}}\Phi_\ft$.
\end{enumerate}\label{Cor F_lam Psi Phi}
\end{corollary}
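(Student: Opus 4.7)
The plan is to derive both identities directly from Proposition~\ref{Phi-Psi prop}. The key inputs are the intertwining relation $\Psi_\ft F_\ft = F_{\ft^\gl}\Phi_\ft$ from part (i), the scalar identity $\gamma_{\ft^\gl}\Phi_\ft\Psi_\ft^* = \gamma_\ft$ from part (iii), and the invertibility of $\Psi_\ft$ and $\Phi_\ft$.

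For assertion (i) of the Corollary, I would multiply the intertwining relation $\Psi_\ft F_\ft = F_{\ft^\gl}\Phi_\ft$ on the right by $\Psi_\ft^*$ and then substitute $\Phi_\ft\Psi_\ft^* = \gamma_\ft/\gamma_{\ft^\gl}$ (which is Proposition~\ref{Phi-Psi prop}(iii) rewritten, valid because the product lies in $R$ and $\gamma_{\ft^\gl}\neq 0$ under Assumption~\ref{ppoly}). This yields $\Psi_\ft F_\ft\Psi_\ft^* = (\gamma_\ft/\gamma_{\ft^\gl})F_{\ft^\gl}$, and rearranging gives assertion (i).

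For assertion (ii), I would apply the $*$-anti-automorphism to the intertwining relation. Since $F_\ft$ and $F_{\ft^\gl}$ are polynomials in the commuting, $*$-fixed Jucys-Murphy elements $x_1,\dots,x_n$ (see Definition~\ref{def F_t f_st}, Lemma~\ref{Lem si xj}(iii), and \S\ref{*-anti-auto}), they are fixed by $*$, so the result is $F_\ft\Psi_\ft^* = \Phi_\ft^* F_{\ft^\gl}$. Multiplying on the right by $\Phi_\ft$ gives $F_\ft(\Psi_\ft^*\Phi_\ft) = \Phi_\ft^* F_{\ft^\gl}\Phi_\ft$, so it suffices to verify that $\Psi_\ft^*\Phi_\ft = \gamma_\ft/\gamma_{\ft^\gl}$.

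The one step that needs a moment's thought is promoting the one-sided identity $\Phi_\ft\Psi_\ft^* = \gamma_\ft/\gamma_{\ft^\gl}$ to the opposite product $\Psi_\ft^*\Phi_\ft$. This works because $\Phi_\ft$ is invertible (Proposition~\ref{Phi-Psi prop}(ii) together with the inductive construction in the Remark, where each factor $s_i-\alpha_{i_j}$ has the non-zero scalar $1-\alpha_{i_j}^2$ as its norm, is invertible in $RS_n$), and the product $\gamma_\ft/\gamma_{\ft^\gl}$ is a central scalar: from $\Phi_\ft\Psi_\ft^* = c \in R$ with $\Phi_\ft$ invertible, one deduces $\Psi_\ft^* = \Phi_\ft^{-1} c$, whence $\Psi_\ft^*\Phi_\ft = \Phi_\ft^{-1}c\Phi_\ft = c$ since $c$ is central. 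Substituting back completes the proof of (ii).
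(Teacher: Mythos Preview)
Your proof is correct and follows essentially the same approach as the paper's: both derive each identity directly from Proposition~\ref{Phi-Psi prop}(i) and (iii) together with $F_\ft^*=F_\ft$. Your version is in fact slightly more explicit than the paper's, which handles part~(ii) by invoking a ``conjugating version'' of the Proposition without spelling out the passage from $\Phi_\ft\Psi_\ft^*=\gamma_\ft/\gamma_{\ft^\gl}$ to $\Psi_\ft^*\Phi_\ft=\gamma_\ft/\gamma_{\ft^\gl}$; your argument via invertibility and centrality of the scalar makes that step transparent. One minor remark: the invertibility of $\Phi_\ft$ and $\Psi_\ft$ is already part of the statement of Proposition~\ref{Phi-Psi prop}, so you need not appeal separately to the inductive construction in the Remark.
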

\begin{proof}
Using parts (iii) and (i) of the Proposition, shows that
$$F_{\ft^\gl}=(\frac{\gamma_{\ft^{\gl}}}{\gamma_\ft}\Psi_\ft\Phi_\ft)^*F_{\ft^{\gl}}
         =\frac{\gamma_{\ft^{\gl}}}{\gamma_\ft}\Psi_\ft F_\ft\Psi_\ft^* \quad\quad\text{ and }\qquad
         F_{\ft}=F_{\ft}(\frac{\gamma_{\ft^{\gl}}}{\gamma_\ft}\Psi_\ft^*\Phi_\ft)=
         \frac{\gamma_{\ft^{\gl}}}{\gamma_\ft}\Phi_\ft^* F_\ft\Phi_\ft,$$
where we use the  `conjugating' version of Proposition~\ref{Phi-Psi prop} for the proof of the part (ii).
\end{proof}

The main reason why we are interested in $\Psi_\ft$ and $\Phi_\ft$
is the following.

\begin{lemma}
Suppose that $\fs$ and $\ft$ are standard
$\lambda$-tableaux. Then
$$f_{\fs\ft}=\Psi_\fs^* f_{\ft^\gl\ft_\gl}\Psi_\ft.$$
\end{lemma}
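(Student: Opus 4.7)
The natural approach is a double induction on the pair $(\fs,\ft)$ in the dominance order, reducing $f_{\fs\ft}$ to a single extremal matrix unit via the factorizations $\Psi_\ft=\Psi_{\ft'}(s_i-\beta)$ and $\Psi_\ft^*=(s_i-\beta)\Psi_{\ft'}^*$ built in the proof of Proposition~\ref{Phi-Psi prop}. The two essential inputs are the going-down relation from Proposition~\ref{Prop-s_i f_su}(i), which can be rewritten as $(s_i-\alpha)f_{\fs\fu}=f_{\fs'\fu}$ whenever $\fs\rhd\fs'=\fs(i,i+1)$, together with the intertwining identity $\Psi_\ft F_\ft=F_{\ft^\gl}\Phi_\ft$ of Proposition~\ref{Phi-Psi prop}(i) and its associated normalization $\gamma_{\ft^\gl}\Phi_\ft\Psi_\ft^*=\gamma_\ft$ from part~(iii).

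First I would derive the right-handed analogue of the going-down relation by applying the $*$-anti-automorphism of~\ref{*-anti-auto} to $(s_i-\alpha)f_{\fs\fu}=f_{\fs'\fu}$ and using $f_{\fs\ft}^*=f_{\ft\fs}$ (immediate from Definition~\ref{def F_t f_st} and $m_{\fs\ft}^*=m_{\ft\fs}$); this produces $f_{\fs\fu}(s_i-\alpha)=f_{\fs\fu'}$ under the mirror hypothesis $\fu\rhd\fu'=\fu(i,i+1)$. I would then proceed in two stages: first, fixing $\fs=\ft^\gl$, induction on $\ell(d(\ft))$ gives a one-sided identity expressing $f_{\ft^\gl\ft}$ as $f_{\ft^\gl\ft_\gl}\,\Psi_\ft$, with $\ft_\gl$ emerging naturally as the extremal right index from Propositions~\ref{Prop m_lamda n_bar lamda} and~\ref{z=f}; second, for general $\fs$, induction on $\ell(d(\fs))$ using the left going-down relation and $\Psi_\fs^*=(s_i-\beta)\Psi_{\fs'}^*$ propagates the identity to the left index, yielding the claimed expression.

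The main obstacle lies in reconciling two distinct residue coefficients: the scalar $\alpha=1/(\res_{\fs'}(i)-\res_\fs(i))$ from Proposition~\ref{Prop-s_i f_su}(i) matches the $\Phi$-parameter of Proposition~\ref{Phi-Psi prop}, whereas the $\Psi$-parameter used in the statement is $\beta=-\alpha$. A naive induction therefore delivers a formula of the shape $\Phi_\fs^* f_{\ft^\gl\ft^\gl}\Phi_\ft$, and the nontrivial work is to convert this into $\Psi_\fs^* f_{\ft^\gl\ft_\gl}\Psi_\ft$ by invoking the intertwining identity of Proposition~\ref{Phi-Psi prop}(i) (which exchanges $\Phi$'s for $\Psi$'s at the cost of moving $F_{\ft^\gl}$ past to $F_\ft$ or $F_\fs$) and the normalization of part~(iii) to absorb the accumulated scalar $\prod_j(1-\alpha_j^2)=\gamma_\ft/\gamma_{\ft^\gl}$, which is exactly the factor that converts $f_{\ft^\gl\ft^\gl}=m_\gl$ into $f_{\ft^\gl\ft_\gl}$. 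Throughout it is cleanest to work first over the generic localization $\sh_{\mathscr{Z}_P}$ of Section~\ref{sec: dual Spechts}, where every residue difference is invertible, and then specialize to $R$.
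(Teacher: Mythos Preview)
The stated formula contains typos; as written it is false already for $\fs=\ft=\ft^\gl$, since then $\Psi_{\ft^\gl}=1$ and the right-hand side reduces to $f_{\ft^\gl\ft_\gl}$, which differs from $f_{\ft^\gl\ft^\gl}$ whenever $\lambda$ admits more than one standard tableau. The identity the paper's own argument actually establishes (and which is the intended statement) is
\[
f_{\fs\ft}=\Phi_\fs^*\,f_{\ft^\gl\ft^\gl}\,\Phi_\ft,
\]
with $\Phi$'s rather than $\Psi$'s and $\ft^\gl$ rather than $\ft_\gl$ in the middle. The paper obtains this directly: write $\Phi_\ft=d(\ft)+\sum_{w<d(\ft)}r_{\ft w}w$ from Proposition~\ref{Phi-Psi prop}(ii), substitute into $f_{\fs\ft}=F_\fs d(\fs)^*m_\lambda d(\ft)F_\ft$, and use Lemma~\ref{Lem Ehresmann} together with Proposition~\ref{Prop-f_st properties} to see that every error term $F_\fs v^*m_\lambda w F_\ft$ with $(v,w)<(d(\fs),d(\ft))$ vanishes; finally $m_\lambda=f_{\ft^\gl\ft^\gl}$ already lies in $F_{\ft^\gl}\sh F_{\ft^\gl}$, so by Proposition~\ref{Phi-Psi prop}(i) the outer idempotents $F_\fs,F_\ft$ may be dropped.

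You correctly identified the formula $\Phi_\fs^*f_{\ft^\gl\ft^\gl}\Phi_\ft$ as what a ``naive'' induction produces; your double-induction scheme in fact proves this corrected statement cleanly, via the right-handed going-down relation $f_{\fu\fs}(s_i-\alpha)=f_{\fu\ft}$ for $\fs\rhd\ft$ and its left analogue, and is if anything more transparent than the paper's approach. Where your proposal fails is the final paragraph: the attempted conversion to $\Psi_\fs^*f_{\ft^\gl\ft_\gl}\Psi_\ft$ cannot succeed, because $f_{\ft^\gl\ft^\gl}$ and $f_{\ft^\gl\ft_\gl}$ are distinct, linearly independent matrix units, not scalar multiples of one another; no factor such as $\gamma_\ft/\gamma_{\ft^\gl}$ can turn one into the other. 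The obstacle you perceived in reconciling $\alpha$ with $\beta=-\alpha$ was an artifact of the misprinted statement, not a genuine difficulty.
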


\begin{proof}
By the definition of $f_{\fs\ft}$ and Proposition~\ref{Phi-Psi prop}(ii)
we have
$$f_{\fs\ft}=F_\fs d(\fs)^*m_\lambda d(\ft)  F_\ft
	  =F_\fs \Phi_\fs^* m_\lambda \Phi_\ft F_\ft
	      -\sum_{(v,w)<(d(\fs),d(\ft))}
	      p_{\fs v}p_{\ft w}F_\fs v^*m_\lambda w F_\ft.$$
Now if $(v,w)<(d(\fs),d(\ft))$ then, by Lemma~\ref{Lem Ehresmann}, $v^*m_\lambda w$ belongs
to the span of the $m_{\fa\fb}$ where $(\fa,\fb)\rhd(\fs,\ft)$.
Therefore, by Proposition~\ref{Prop-f_st properties}(i), $v^*m_\lambda w$
belongs to the span of the $f_{\fa\fb}$ where either $\fa$ and $\fb$
are standard $\lambda$-tableaux and $(\fa,\fb)\rhd(\fs,\ft)$,
or $\shape(\fa)=\shape(\fb)\rhd\lambda$; consequently, $F_\fs
v^*m_\lambda w F_\ft=0$ by Proposition~\ref{Prop-f_st properties}(iii). Hence,
by Theorem~\ref{Them idempotents}(i) and Proposition~\ref{Phi-Psi prop}(i),
$$f_{\fs\ft} =F_\fs \Phi_\fs^* m_\lambda \Phi_\ft F_\ft
           =\Psi_\fs^* F_{\ft^\gl} m_\lambda F_{\ft^\gl}\Psi_\ft
           =\Psi_\fs^* f_{\ft^\gl\ft_\gl}\Psi_\ft^*$$
as required.
\end{proof}

Recall that, see Proposition~\ref{Prop m_lamda n_bar lamda}, $z_{\gl}=m_{\gl}w_{\gl}n_{\bar{\gl}}=
u^{+}_{\gl}x_{\gl}w_{\gl}y_{\bar{\gl}}
u^-_{\bar{\gl}}=\overline{\gamma_{\ft^{\bar{\gl}}}}f_{\ft^{\gl}\ft^{\gl}}$. The following fact is crucial to our
computation of the Schur elements of the degenerate cyclotomic Hecke algebras.

\begin{proposition}\label{Prop F_t^lambda Z_lambda}
Suppose that $\lambda$ is an $m$-multipartition of $n$. Then
\begin{align*}F_{\ft^{\gl}}=\frac1{\gamma_{\ft_{\gl}}} f_{\ft^{\gl}\ft_{\gl}}\Phi_{\ft_{\gl}}
  =\frac1{\gamma_{\ft_{\gl}}\overline{\gamma_{\ft^{\bar{\gl}}}}}z_\lambda w_{\bar{\gl}}\end{align*}
  is a primitive idempotent of $\sh$ with $\sh F_{\ft^{\gl}}\cong S^{\gl}$.
\end{proposition}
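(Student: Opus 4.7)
The plan is to verify the two asserted equalities and then invoke the primitivity statement, which is immediate: $\sh F_{\ft^{\gl}}\cong S^{\gl}$ and primitivity of $F_{\ft^{\gl}}$ are already provided by Theorem~\ref{Them idempotents}(i). So the substantive content lies entirely in the two explicit formulas for $F_{\ft^{\gl}}$.

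For the second equality I would invoke Proposition~\ref{z=f}, which under Assumption~\ref{ppoly} gives $z_{\gl}=\overline{\gamma_{\ft^{\bar{\gl}}}} f_{\ft^{\gl}\ft_{\gl}}$, and so
\begin{equation*}
\frac{1}{\gamma_{\ft_{\gl}}\overline{\gamma_{\ft^{\bar{\gl}}}}}\,z_{\gl}\, w_{\bar{\gl}}
\;=\;\frac{1}{\gamma_{\ft_{\gl}}}\,f_{\ft^{\gl}\ft_{\gl}}\,w_{\bar{\gl}}.
\end{equation*}
The two right-hand sides of the proposition then match once we verify $f_{\ft^{\gl}\ft_{\gl}}\Phi_{\ft_{\gl}}=f_{\ft^{\gl}\ft_{\gl}}w_{\bar{\gl}}$. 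Here I would first record the identification $w_{\bar{\gl}}=w_{\gl}^{-1}$, which follows by applying the bar involution on tableaux to $\ft_{\gl}=\ft^{\gl}w_{\gl}$ and using $\ft_{\gl}=\overline{\ft^{\bar{\gl}}}$ together with compatibility of bar with the right $S_{n}$-action. Since $\Phi_{\ft_{\gl}}=d(\ft_{\gl})+\sum_{w<d(\ft_{\gl})}r_{\ft_{\gl}w}w$ by Proposition~\ref{Phi-Psi prop}(ii), $\Phi_{\ft_{\gl}}$ and $w_{\bar{\gl}}$ only agree modulo Bruhat-lower $S_{n}$-elements; these differences get annihilated after left-multiplication by $f_{\ft^{\gl}\ft_{\gl}}$, by exactly the dominance/vanishing argument from the proof of the preceding Lemma (Proposition~\ref{Prop-f_st properties}(ii) kills any
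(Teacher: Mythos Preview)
Your proposal has a real gap. You reduce the comparison of the two displayed expressions to the identity $f_{\ft^{\gl}\ft_{\gl}}\Phi_{\ft_{\gl}}=f_{\ft^{\gl}\ft_{\gl}}w_{\bar{\gl}}$, but the justification you sketch does not go through. First, the leading term of $\Phi_{\ft_{\gl}}$ is $d(\ft_{\gl})=w_{\gl}$, whereas you yourself note $w_{\bar{\gl}}=w_{\gl}^{-1}$; these are different in general, so $\Phi_{\ft_{\gl}}-w_{\bar{\gl}}$ is \emph{not} merely a combination of Bruhat-lower elements. Second, and more seriously, the dominance/vanishing argument from the preceding Lemma requires projectors $F_{\fs},F_{\ft}$ on \emph{both} sides of the Bruhat-lower terms; here there is nothing on the right of $\Phi_{\ft_{\gl}}$, and left multiplication by $f_{\ft^{\gl}\ft_{\gl}}$ alone does \emph{not} annihilate $w<w_{\gl}$. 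Already for $m=1$, $\gl=(2,1)$ one has $\Phi_{\ft_{\gl}}=s_{2}+\tfrac12$ and $f_{\ft^{\gl}\ft_{\gl}}\cdot\tfrac12\ne0$. Finally, you never establish the first equality $F_{\ft^{\gl}}=\tfrac{1}{\gamma_{\ft_{\gl}}}f_{\ft^{\gl}\ft_{\gl}}\Phi_{\ft_{\gl}}$; your outline only compares the two right-hand sides with each other.

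The paper's argument is quite different in mechanism. After deducing $F_{\ft^{\gl}}=\frac{1}{\gamma_{\ft_{\gl}}\overline{\gamma_{\ft^{\bar{\gl}}}}}\,z_{\gl}\Phi_{\ft_{\gl}}$ from Proposition~\ref{z=f} and the $\Phi$--$\Psi$ relations, it does \emph{not} try to show $z_{\gl}\Phi_{\ft_{\gl}}=z_{\gl}w_{\bar{\gl}}$ directly. Instead it squares: writing $z_{\gl}=u_{\gl}^{+}x_{\gl}w_{\gl}u_{\bar{\gl}}^{-}y_{\bar{\gl}}$, the Bruhat-lower terms of $\Phi_{\ft_{\gl}}$ are killed in the product $y_{\bar{\gl}}\,\Phi_{\ft_{\gl}}\,x_{\gl}$ by the trivial-intersection property $S_{\gl}\cap w_{\gl}S_{\bar{\gl}}w_{\gl}^{-1}=\{1\}$ (since $w_{\gl}$ is the unique minimal-length element of its double coset). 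This yields $z_{\gl}\Phi_{\ft_{\gl}}z_{\gl}=z_{\gl}w_{\bar{\gl}}z_{\gl}$, hence $\bigl(\tfrac{1}{c}z_{\gl}w_{\bar{\gl}}\bigr)^{2}=\tfrac{1}{c}z_{\gl}w_{\bar{\gl}}$ with $c=\gamma_{\ft_{\gl}}\overline{\gamma_{\ft^{\bar{\gl}}}}$, and one concludes via primitivity of $F_{\ft^{\gl}}$. The key point you are missing is that the lower terms of $\Phi_{\ft_{\gl}}$ vanish only when sandwiched between $y_{\bar{\gl}}$ and $x_{\gl}$, not upon one-sided multiplication by $f_{\ft^{\gl}\ft_{\gl}}$.
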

\begin{proof}By Corollary~\ref{Cor F_lam Psi Phi}(i) and Proposition~\ref{Phi-Psi prop}(iii),
$F_{\ft^\gl}=\frac{\gamma_{\ft^{\gl}}}
{\gamma_\ft}\Psi_\ft F_\ft\Psi_\ft^*=\gma_{\ft^{\gl}}F_{\ft^{\gl}}\Phi_{\ft_{\gl}}$. Now, by Proposition~\ref{z=f},
$z_{\gl}=
\overline{\gamma_{\ft^{\bar{\gl}}}}f_{\ft^{\gl}\ft_{\gl}}=\gma_{\ft^{\gl}}\overline{
\gamma_{\ft^{\bar{\gl}}}}F_{\ft^{\gl}}\Phi_{\ft_{\gl}}$.
 Therefore $F_{\ft^{\gl}}=\frac{1}
{\overline{\gamma_{\ft^{\bar{\gl}}}}\gma_{\ft_{\gl}}}z_{\gl}\Phi_{\ft_{\gl}}$, which implies that $z_{\gl}\in F_{\ft^{\gl}}\sh$, moreover
$F_{\ft^{\gl}}z_{\gl}=z_{\gl}$ since $F_{\ft^{\gl}}$ is an idempotent. As a consequence,
\begin{align*}z_{\gl}w_{\gl}=F_{\ft^{\gl}}z_{\gl}w_{\gl}=\frac{1}
{\overline{\gamma_{\ft^{\bar{\gl}}}}\gma_{\ft_{\gl}}}z_{\gl}\Phi_{\ft_{\gl}}z_{\gl}w_{\gl}=\frac{1}
{\overline{\gamma_{\ft^{\bar{\gl}}}}\gma_{\ft_{\gl}}}m_{\gl}w_{\gl}n_{\bar{\gl}}\Phi_{\ft_{\gl}}
m_{\gl}w_{\gl}n_{\bar{\gl}}w_{\gl}.\end{align*}

Applying Proposition~\ref{Phi-Psi prop}(ii), $\Phi_{\ft_{\gl}}=w_{\gl}+\sum_{w<w_{\gl}} r_{\ft_{\gl} w}w$, for some
$r_{\ft_{\gl} w}\in R$. On the other hand, note that the permutation $w_{\gl}$ has the ``trivial intersection property'',
that is, $S_{\gl}\cap wS_{\bar{\gl}}w^{-1}\neq \{1\}$ if and only if $S_{\gl}wS_{\bar{\gl}}=S_{\gl}w_{\gl}S_{\bar{\gl}}$,
see for example \cite[\S4.9]{DJ:reps}.
As a consequence, $y_{\bar{\gl}}wx_{\gl}\neq 0$ if and only if $w\in S_{\gl}w_{\gl}S_{\bar{\gl}}$. Thus
$y_{\bar{\gl}}\Phi_{\ft_{\gl}}x_{\gl}=y_{\bar{\gl}}w_{\gl}x_{\gl}$ since $w_{{\gl}}$ is the unique element of minimal
 length in $S_{\gl}w_{\gl}S_{\bar{\gl}}$, and $z_{\gl}\Phi_{\ft_{\gl}}z_{\gl}=(x_{\gl}u^+_{\gl}w_{\gl}
 u^-_{\bar{\gl}}y_{\bar{\gl}})\Phi_{\ft_{\gl}}(x_{\gl}u^+_{\gl}w_{\gl}
 u^-_{\bar{\gl}}y_{\bar{\gl}})=x_{\gl}u^+_{\gl}w_{\gl}
 u^-_{\bar{\gl}}y_{\bar{\gl}}w_{\bar{\gl}}x_{\gl}u^+_{\gl}w_{\gl}
 u^-_{\bar{\gl}}y_{\bar{\gl}})=z_{\gl}w_{\bar{\gl}}z_{\gl}$.  Therefore,  $z_{\gl}w_{\bar{\gl}}=\frac{1}
{\overline{\gamma_{\ft^{\bar{\gl}}}}\gma_{\ft_{\gl}}}(z_{\gl}w_{\gl})^2$. Furthermore, $\frac{1}
{\overline{\gamma_{\ft^{\bar{\gl}}}}\gma_{\ft_{\gl}}}z_{\gl}w_{\gl}$ is an idempotent in $\sh F_{\ft^{\gl}}$. Hence, $F_{\ft^{\gl}}=\frac{1}
{\overline{\gamma_{\ft^{\bar{\gl}}}}\gma_{\ft_{\gl}}}z_{\gl}w_{\gl}$ since $F_{\ft^{\gl}}$ is a primitive idempotent.
\end{proof}
\section{Computation of $\tau(z_\gl w_{\bar{\gl}})$}\label{Sec: comuputaion}
In this section we determine $\tau(z_\gl w_{\bar{\gl}})$, which is the key to the computation of the
Schur elements of the degenerate cyclotomic Hecke algebras $\sh$. We will see that for all parameters $q_1, \dots, q_m$,
$\tau(z_\gl w_{\bar{\gl}})$ is a unit in $R$, which answer partly that for all parameters $q_1, \dots, q_m$,
$\sh$ is a symmetric algebra. This fact has important consequences for the representation theory of degenerate Hecke
algebras. For example, it can be used to show that $S^{\gl}=\sh m_{\gl}$ is a self dual $\sh$-module and that
$\widetilde{S}^{\bar{\gl}}$ is isomorphic to the dual of $S^{\gl}$. The details will be appear elsewhere.

\begin{Point}{}*
Fix an $m$-multipartition $\gl=(\gl^1; \dots; \gl^m)$ of $n$. Let $a_i=\sum_{j=1}^{i-1}|\lambda^{j}|$ and
$b_i=\sum_{j=i+1}^m|\lambda^{j}|$ for $1\le i\le m$. Set
 $n(\gl)=\sum_{i=1}^m=(i-1)|\gl^i|$ and $\ga(\gl)=\frac{1}{2}\sum_{i=1}^m\sum_{j\geq1}(\gl^i_j-1)\gl^i_j$.  For each integer $k$ with $1\le k\le n$,  we let $c^{\gl}(k)=c$ (resp., $c_{\gl}(k)$) if $k$ appears in the $c$-component of $\ft^{\gl}$ (resp., $\ft_{\gl}$). Then $c^{\gl}(k)=
\min\{1\le c\le m\mid k\leq |\gl^1|+\dots +|\gl^c|\}$ and $c_{\gl}(k)=
\min\{1\le c\le m\mid k\leq |\gl^{m+1-c}|+\dots +|\gl^m|\}$.

Recall that $u^+_{\gl}=\prod_{i=2}^m\prod_{k=1}^{a_i}(x_k-q_i)$ and $u^-_{\bar{\gl}}
=(-1)^{n(\gl)}\prod_{j=2}^m\prod_{l=1}^{b_j}(x_k-q_l)$ (Definitions~\ref{Def x_lamda u^+_lamda} and
\ref{Def y_lamda u^-_lamda}). Clearly, $u^+_{\gl}$ is a polynomial (in
variables $x_1, \dots, x_n$) of degree $\ga(\gl)$ and $u^-_{\bar{\gl}}$ is a polynomial (in
variables $x_1, \dots, x_n$) of degree $n(\gl)$.  Furthermore, for each integer $k$ with $1\le k\le n$, $u^+_{\gl}$
is a polynomial in $x_k$ of degree $m-c_{\gl}(k)$.

Recall that there is a natural $S_n$-action on the polynomial ring $R[y_1, \dots, y_n]$ defined by $$\sigma\cdot f(y_1, \dots,
y_n)=f(y_{\sigma(1)}, \dots, y_{\sigma(n)})$$ for all $\sigma\in S_n$ and for all
$f(y_1, \dots, y_n)\in R[y_1, \dots, y_n]$. For a moment, we denote by $L(f)$ the leading term of a polynomial
$f(y_1, \dots, y_n)\in R[y_1, \dots, y_n]$.
\end{Point}

Before we state our key lemma, we consider an example.

\begin{example}Let $\gl=(2\cdot1\cdot1;2\cdot1;1)$ be a 3-multipartition of 8. Then $\bar{\gl}=(1; 2\cdot1;3\cdot1)$,
$a_1=0=b_3$, $a_2=4$, $a_3=7$, $b_1=4$, $b_2=1$, $n(\gl)=5$. Furthermore
\begin{align*}u^+_{\gl}&=(x_1-q_2)(x_2-q_2)(x_3-q_2)(x_4-q_2)(x_1-q_3)
(x_2-q_3)(x_3-q_3)(x_4-q_3)(x_5-q_3)(x_6-q_3)(x_7-q_3),\\
u^-_{\bar{\gl}}&=-(x_1-q_1)(x_2-q_1)(x_3-q_1)(x_4-q_1)(x_1-q_2),\\
w_\lambda&=\Big(\begin{array}{*4r|*3r|*2r} 1& 2& 3& 4& 5& 6& 7& 8\\
                              8& 5& 7& 6& 1& 3& 4& 2
              \end{array}\Big)=(1,8,2,5)(3,7,4,6),
w_{\bar{\gl}}\,=\Big(\begin{array}{*4r|*3r|*2r} 1& 2& 3& 4& 5& 6& 7& 8\\
                                   5&8&6& 7& 2& 4& 3& 1
	           \end{array}\Big)=(1,5,2,8)(3,6,4,7)=w_{\gl}^{-1},
              \end{align*}
and $w_{\gl}\cdot u^-_{\bar{\gl}}=-(x_8-q_1)(x_5-q_1)(x_7-q_1)(x_6-q_1)(x_8-q_2)$. Thus
$L(u^+_{\gl}(w_{\gl}\!\cdot\!u^-_{\bar{\gl}}))=-
 x_1^2x_2^2\cdots x_8^2$. similarly, it follows by direct computation that
$L((w_{\bar{\gl}}\!\cdot\!u^+_{\gl})u^-_{\bar{\gl}})=-
 x_1^2x_2^2\cdots x_8^2$.
\end{example}

The following fact is the key to our computation of $\tau(z_\gl w_{\bar{\gl}})$.

\begin{lemma}\label{Lem key}Let $\gl$ be an $m$-multipartition of $n$ and let $l=(m-1)n$. Then the polynomials
$u^+_{\gl}(w_{\gl}\!\cdot\!u^-_{\bar{\gl}})$
 and $(w_{\bar{\gl}}\!\cdot\!u^+_{\gl})u^-_{\bar{\gl}}$ are of degree $(m-1)n$, moreover,
 $$\gr_{l}u^+_{\gl}(w_{\gl}\!\cdot\!u^-_{\bar{\gl}})= \gr_{l}(w_{\bar{\gl}}\!\cdot\!u^+_{\gl})u^-_{\bar{\gl}}=(-1)^{n(\gl)}x_1^{m-1}x_2^{m-1}\cdots x_n^{m-1}.$$
 \end{lemma}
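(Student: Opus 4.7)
The plan is to expand both products as polynomials in the commuting variables $x_1,\ldots,x_n$ and then read off their top-degree components. The two key inputs are (1) explicit variable-by-variable expressions for $u^+_\gl$ and $u^-_{\bar\gl}$, and (2) how the component functions $c^\gl(\cdot)$ and $c_\gl(\cdot)$ behave under $w_\gl^{-1}$ and $w_{\bar\gl}^{-1}$.

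First I would regroup the defining products. Since $a_i\geq k\Leftrightarrow i>c^\gl(k)$ and $b_j\geq k\Leftrightarrow j<c_\gl(k)$, one obtains
$$u^+_\gl=\prod_{k=1}^n\prod_{i=c^\gl(k)+1}^{m}(x_k-q_i),\qquad u^-_{\bar\gl}=(-1)^{n(\gl)}\prod_{k=1}^n\prod_{j=1}^{c_\gl(k)-1}(x_k-q_j),$$
so in each variable $x_k$, $u^+_\gl$ is monic of degree $m-c^\gl(k)$ and $(-1)^{n(\gl)}u^-_{\bar\gl}$ is monic of degree $c_\gl(k)-1$.

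Next I would establish the two position-matching identities
$$c_\gl(w_\gl^{-1}(k))=c^\gl(k),\qquad c^\gl(w_{\bar\gl}^{-1}(k))=c_\gl(k).$$
The first is immediate from $w_\gl=d(\ft_\gl)$: the entry $w_\gl^{-1}(k)$ of $\ft_\gl$ and the entry $k$ of $\ft^\gl$ occupy the same node of $[\gl]$, and the component depends only on the node. The second I would prove by applying the same observation to $\bar\gl$ in place of $\gl$, together with the straightforward identities $c_{\bar\gl}(k)=m-c^\gl(k)+1$ and $c^{\bar\gl}(k)=m-c_\gl(k)+1$ coming from the definitions of $\bar\gl$, $\ft^{\bar\gl}$ and $\ft_{\bar\gl}$.

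Applying $w$ to a polynomial substitutes $x_k\mapsto x_{w(k)}$; reindexing by $k'=w(k)$ and invoking the identities above yields
$$w_\gl\!\cdot\!u^-_{\bar\gl}=(-1)^{n(\gl)}\prod_{k=1}^n\prod_{j=1}^{c^\gl(k)-1}(x_k-q_j),\qquad w_{\bar\gl}\!\cdot\!u^+_\gl=\prod_{k=1}^n\prod_{i=c_\gl(k)+1}^{m}(x_k-q_i).$$
Multiplying with the remaining factor merges the two index ranges into $\{1,\ldots,m\}\setminus\{c\}$ (with $c=c^\gl(k)$ in the first case and $c=c_\gl(k)$ in the second), so each product equals $(-1)^{n(\gl)}\prod_{k=1}^n\prod_{i\neq c}(x_k-q_i)$, has degree exactly $m-1$ in every $x_k$, hence total degree $l=(m-1)n$, and leading monomial $(-1)^{n(\gl)}x_1^{m-1}\cdots x_n^{m-1}$. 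Taking $\gr_l$ returns precisely this monomial. The only real obstacle is the bookkeeping in the preceding paragraph, since $w_\gl$ and $w_{\bar\gl}$ interact with $c^\gl(\cdot)$ and $c_\gl(\cdot)$ in opposite ways; once those two identities are set down, the computation is direct.
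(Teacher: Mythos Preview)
Your argument is correct and in fact proves more than the paper does: you exhibit both products in \emph{closed form},
\[
u^+_{\gl}\bigl(w_{\gl}\!\cdot\!u^-_{\bar\gl}\bigr)=(-1)^{n(\gl)}\prod_{k=1}^{n}\ \prod_{i\ne c^{\gl}(k)}(x_k-q_i),\qquad
\bigl(w_{\bar\gl}\!\cdot\!u^+_{\gl}\bigr)\,u^-_{\bar\gl}=(-1)^{n(\gl)}\prod_{k=1}^{n}\ \prod_{i\ne c_{\gl}(k)}(x_k-q_i),
\]
so that each factor has degree exactly $m-1$ in every $x_k$ and the leading monomial is read off immediately. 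The paper takes a different route: it only tracks the top-degree piece, first reducing to a single product (using that the putative leading term is $S_n$-symmetric and $w_{\gl}w_{\bar\gl}=1$) and then arguing by induction on $n$ with a case split according to whether some component $\gl^{i}$ is empty. Your two position-matching identities $c_{\gl}\bigl(w_{\gl}^{-1}(k)\bigr)=c^{\gl}(k)$ and $c^{\gl}\bigl(w_{\bar\gl}^{-1}(k)\bigr)=c_{\gl}(k)$ replace that entire induction with a one-line reindexing, and as a bonus give the full polynomial rather than just $\gr_{l}$ of it.

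The one point to watch --- and you already flag it --- is the \emph{direction} of those identities: whether it is $w_{\gl}$ or $w_{\gl}^{-1}$ that carries $c_{\gl}$ to $c^{\gl}$ depends on how the right $S_n$-action on tableaux is set against the left action $\sigma\cdot y_k=y_{\sigma(k)}$ on polynomials. Your justification (``the entry $w_{\gl}^{-1}(k)$ of $\ft_{\gl}$ and the entry $k$ of $\ft^{\gl}$ occupy the same node'') is exactly the convention under which the Lemma is true and agrees with the worked example immediately preceding it; just be aware that the paper's conventions are not fully self-consistent across sections, so if you transplant this argument elsewhere you should re-derive the identity from scratch rather than quote it.
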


\begin{proof}It suffices to show that $\gr_{l}(w_{\bar{\gl}}\!\cdot\!u^+_{\gl})u^-_{\bar{\gl}}=(-1)^{n(\gl)}x_1^{m-1}
x_2^{m-1}\cdots x_n^{m-1}$. Indeed, if this is done, then the first part follows immediately from that the degrees of
 $u^+_{\gl}(w_{\gl}\!\cdot \!u^-_{\bar{\gl}})$ and $(w_{\bar{\gl}}\!\cdot\!u^+_{\gl})u^-_{\bar{\gl}}$ are at most
 $(m-1)n$; on the other hand, note that for any $w\in S_n$, $\gr_{l}w\!\cdot\!(w_{\gl}\!\cdot\!u^+_{\gl})u^-_{\bar{\gl}}=
(-1)^{n(\gl)}x_1^{m-1}x_2^{m-1}\cdots x_n^{m-1}$.  In particular, let $w=w_{\gl}$, then
$\gr_{l}w_{\gl}\!\cdot\!(w_{\bar{\gl}}\!\cdot\!u^+_{\gl})u^-_{\bar{\gl}}=
\gr_{l}(w_{\gl}w_{\bar{\gl}}\!\cdot\!u^+_{\gl})(w_{\gl}\!\cdot\!u^-_{\bar{\gl}})=
\gr_{l}u^+_{\gl}(w_{\gl}\!\cdot\!u^-_{\bar{\gl}})=
(-1)^{n(\gl)}x_1^{m-1}x_2^{m-1}\cdots x_n^{m-1}$ since $w_{\bar{\gl}}w_{\gl}=1$.

We proceed by induction on $n$. Then, by definitions, $L(u^+_{\gl})=\prod_{i=1}^n x_{i}^{m-c^{\gl}(i)}$ and
 $L(u^-_{\bar{\gl}})=\prod_{i=1}^n x_{i}^{c_{\gl}(i)-1}$.  Now, if  there are some $i$
with $1\le i\le m$ such that $|\gl^i|=0$, for simplicity, we assume that $\gl^1=\emptyset$ and
 $\gl^2\neq \emptyset\neq \gl^m$, then
 \begin{align*}c^{\gl}(i)=\left\{
      \begin{array}{ll}
        2, & \hbox{if } 1\le i\le |\gl^2|;\\
        m, & \hbox{if } n-|\gl^{m}|< i\le n,
      \end{array}
    \right.\qquad\text{ and }\qquad c_{\gl}(i)=\left\{
      \begin{array}{ll}
        2, & \hbox{if } n-|\gl^2|<i\le n;\\
        m, & \hbox{if } 1\le i\le |\gl^m|.
      \end{array}
    \right.\end{align*}
Therefore
 \begin{align*}L(u^+_{\gl})=\displaystyle\prod_{i=1}^{|\gl^2|}x_i^{m-2}\displaystyle\prod_{i>|\gl_2|}^{n-|\gl^m|}
 x_{i}^{m-c_{\gl}(i)} \quad\text{ and }\quad L(u^-_{\bar{\gl}})=(-1)^{n(\gl)}\displaystyle\prod_{i=1}^{|\gl^m|}x_i^{m-1}
  \displaystyle\prod_{i>|\gl^m|}^{n-|\gl_2|} x_{i}^{c_{\gl}(i)}\displaystyle\prod_{i>n-|\gl^2|}^nx_{i}.\end{align*}
On the other hand,
\begin{align*}
w_{\bar{\lambda}}&=\Big(\begin{array}{*4c|*2c|*4c} 1& 2& \cdots& |\gl^2|& \cdots& \cdots& n-|\gl^m|+1& \cdots& n\\
                                   i_{1}&i_{2}&\cdots& i_{|\gl^2|}& \cdots& \cdots& j_1& \cdots& j_{|\gl^m|}
	           \end{array}\Big)
\end{align*}
where $\{i_1=n-|\gl^2|+1, i_2, \cdots, i_{|\gl^2|}\}=\{n-|\gl^2|+1, \cdots, n\}$ and
$\{j_1=1, j_2, \cdots, j_{|\gl^m|}\}=\{1, \cdots, |\gl^m|\}$.

\noindent As a consequence,   $$L((w_{\bar{\gl}}\!\cdot\!u^+_{\gl})u^-_{\bar{\gl}})=
(-1)^{n(\gl)}\displaystyle\prod_{i=1}^{|\gl^m|}x_i^{m-1}\prod_{i>|\gl_2|}^{n-|\gl^m|}
 x_{w_{\gl}(i)}^{m-c_{\gl}(i)}\prod_{i>|\gl^m|}^{n-|\gl_2|} x_{i}^{c_{\gl}(i)} \prod_{i>n-|\gl^2|}^nx_{i}^{m-1}.$$
Now, by induction, $L((w_{\bar{\gl}}\!\cdot\!u^+_{\gl})u^-_{\bar{\gl}})=(-1)^{n(\gl)}x_1^{m-1}x_2^{m-1}\cdots x_n^{m-1}$.

Now assume that $\gl=(\gl^1; \dots; \gl^m)$ with $|\gl^i|>0$. Then \begin{align*}L(u^+_{\gl})=\prod_{i=1}^{|\gl^1|}x_i^{m-1}
\prod_{i>|\gl_1|}^{n-|\gl^m|}
 x_{i}^{m-c_{\gl}(i)} \quad\text{ and }\quad L(u^-_{\bar{\gl}})=(-1)^{n(\gl)}\prod_{i=1}^{|\gl^m|}x_i^{m-1}
  \prod_{i>|\gl^m|}^{n-|\gl_1|} x_{i}^{c_{\gl}(i)}.\end{align*}
Using the same arguments, we yield that $L((w_{\bar{\gl}}\!\cdot\!u^+_{\gl})u^-_{\bar{\gl}})=(-1)^{n(\gl)}x_1^{m-1}x_2^{m-1}\cdots x_n^{m-1}$.
\end{proof}

Now we can obtain the main result of this section.

\begin{theorem}\label{Them tau(Zw)}
Let $\gl$ be an $m$-multipartition of $n$ and $n(\gl)\!\!=\!\!\sum_{i=1}^m(i\!-\!1)|\gl^i|$.
Then $\tau(z_\gl w_{\bar{\gl}})\!\!=\!\!(\!-1\!)^{n(\gl)}$.
\end{theorem}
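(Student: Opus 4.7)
The plan is to push the computation into the graded twisted tensor algebra $R_m[y_1,\ldots,y_n]\rtimes RS_n$ via the isomorphism $\psi_n$ of Lemma~\ref{Lem graded iso}; combined with $\tau = \hat\tau \circ \gr_l$ from Theorem~\ref{Thm-def tau}, extracting the trace then reduces to reading off one monomial coefficient, which is exactly what Lemma~\ref{Lem key} supplies.

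First I would count degrees:
\[
\deg u_\gl^+ \,=\, \sum_{j=1}^{m}(m-j)|\gl^j|,
\qquad \deg u_{\bar{\gl}}^- \,=\, n(\gl) \,=\, \sum_{j=1}^m (j-1)|\gl^j|,
\]
so $\deg u_\gl^+ + \deg u_{\bar{\gl}}^- = (m-1)n = l$, which means $z_\gl w_{\bar{\gl}}$ saturates the filtration piece $\cf_l\sh$. Writing $z_\gl w_{\bar{\gl}} = u_\gl^+\, x_\gl w_\gl y_{\bar{\gl}}\, u_{\bar{\gl}}^-\, w_{\bar{\gl}}$ and passing to the graded algebra, where the exact commutation rule $w \, f(y) = (w\cdot f)(y)\, w$ holds, I would shove $u_{\bar{\gl}}^-$ through $w_{\bar{\gl}}$ to obtain
\[
\gr_l(z_\gl w_{\bar{\gl}}) \,=\, u_\gl^+ \,\cdot\, \xi \,\cdot\, (w_\gl \cdot u_{\bar{\gl}}^-),
\qquad \xi \,:=\, x_\gl w_\gl y_{\bar{\gl}} w_{\bar{\gl}} \,\in\, RS_n,
\]
using $w_{\bar{\gl}} = w_\gl^{-1}$ (inherited from $\ft_{\bar{\gl}} = \overline{\ft^\gl}$; compare the example following Definition~\ref{Def Z_lambda}).

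Next I would commute $\xi = \sum_\pi c_\pi \pi$ past the right-hand polynomial; any term with $\pi \neq 1$ ends with a nontrivial permutation and so lies in the kernel of $\hat\tau$. Hence only the coefficient $c_1$ survives. Expanding
\[
\xi \,=\, \sum_{u\in S_\gl,\, v\in S_{\bar{\gl}}}(-1)^{\ell(v)}\, u\,(w_\gl v w_\gl^{-1}),
\]
the equation $u(w_\gl v w_\gl^{-1}) = 1$ forces $u \in S_\gl\cap w_\gl S_{\bar{\gl}}w_\gl^{-1} = \{1\}$ by the ``trivial intersection'' property of $w_\gl$ recalled in the proof of Proposition~\ref{Prop F_t^lambda Z_lambda}. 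Thus $(u,v) = (1,1)$ is the sole contribution and $c_1 = 1$, whence
\[
\tau(z_\gl w_{\bar{\gl}}) \,=\, \hat\tau\bigl(u_\gl^+\,(w_\gl \cdot u_{\bar{\gl}}^-)\bigr).
\]

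The final step is to invoke Lemma~\ref{Lem key}, which identifies the degree-$l$ component of $u_\gl^+(w_\gl\cdot u_{\bar{\gl}}^-)$ as precisely $(-1)^{n(\gl)} y_1^{m-1}\cdots y_n^{m-1}$, and then $\hat\tau$ reads off $(-1)^{n(\gl)}$. The main obstacle I anticipate is ensuring rigorously that passing to the associated graded kills every lower-degree correction produced by the commutator identity $s_j x_{j+1} - x_j s_j = 1$ of Lemma~\ref{Lem si xj}(i) when pushing polynomials past group elements inside $\sh$ itself; the tight degree saturation $\deg u_\gl^+ + \deg u_{\bar{\gl}}^- = l$ established in the first step is precisely what makes this clean, since any degree loss during commutation automatically lands in $\cf_{l-1}\sh$ and becomes invisible to $\gr_l$.
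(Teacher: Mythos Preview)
Your proposal is correct and follows essentially the same route as the paper: pass to the associated graded via $\tau=\hat\tau\circ\gr_l$, use Lemma~\ref{Lem key} to identify the top-degree polynomial piece as $(-1)^{n(\gl)}x_1^{m-1}\cdots x_n^{m-1}$, and reduce the group-algebra part to the single pair $(u,v)=(1,1)$ via the trivial intersection $S_\gl\cap w_\gl S_{\bar\gl}w_\gl^{-1}=\{1\}$ together with $w_{\bar\gl}=w_\gl^{-1}$. The only cosmetic difference is that the paper first cycles $x_\gl$ with the trace and pushes $u^-_{\bar\gl}$ leftward past $w_\gl$, whereas you push $u^-_{\bar\gl}$ rightward past $w_{\bar\gl}$ and then commute the entire group-algebra factor $\xi$ through the polynomial; both manoeuvres land on the same computation.
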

\begin{proof}
First, note that for any $w\in S_n$ and any polynomial $f(x_1, \dots, x_n)$ (in $x_1, \dots, x_n$),
$f$ and $w\!\cdot\!f$ have the same degree, furthermore, the leading terms of $w\!\cdot\! f$ are obtained from those of
$f$ via the $w$-action. Let $l=(m-1)n$.  Therefore
\begin{align*}\tau(z_\gl w_{\bar{\gl}})&=\tau(x_{\gl}u^+_{\gl}w_{\gl}u^-_{\bar{\gl}}y_{\bar{\gl}}w_{\bar{\gl}})
 &(\text{Definition }\ref{Def Z_lambda})\\
&=\tau(u^+_{\gl}w_{\gl}u^-_{\bar{\gl}}y_{\bar{\gl}}w_{\bar{\gl}}x_{\gl})&(\ref{tau properties}\text{(i)})\\
&=\hat{\tau}\left(\gr_{l}(u^+_{\gl}w_{\gl}u^-_{\bar{\gl}}y_{\bar{\gl}}w_{\bar{\gl}}x_{\gl})\right)&(\text{Theorem }\ref{Thm-def tau})\\
&=\hat{\tau}\left(\gr_{l}\left(u^+_{\gl}(w_{\gl}\!\cdot\!u^-_{\bar{\gl}})\right)w_{\gl}y_{\bar{\gl}}w_{\bar{\gl}}x_{\gl}\right)&(\text{Lemma }\ref{Lem graded iso})\\
&=(-1)^{n(\gl)}\sum_{u\in S_{\bar{\gl}}, v\in S_{\gl}}(-1)^{\ell(u)}\tau\left(x_1^{m-1}\cdots x_n^{m-1}w_{\gl}uw_{\bar{\gl}}v\right)&(\text{Lemma }\ref{Lem key})\\
&=(-1)^{n(\gl)}\sum_{u\in S_{\bar{\gl}}, v\in S_{\gl}}(-1)^{\ell(u)}\tau\left(x_1^{m-1}\cdots x_n^{m-1}w_{\gl}uw_{\gl}^{-1}v\right)&( w_{\gl}w_{\bar{\gl}}=1)\\
&=(-1)^{n(\gl)}\sum_{\substack{(u,v)\in S_{\bar{\gl}}\times S_{\gl}\\ w_{\gl}uw_{\gl}^{-1}=v^{-1}}}(-1)^{\ell(u)}\tau\left(x_1^{m-1}
\cdots x_n^{m-1}w_{\gl}uw_{\gl}^{-1}v\right)&( \ref{tau properties}\text{(ii)})\\
&=(-1)^{n(\gl)}\tau(x_1^{m-1}
\cdots x_n^{m-1})&(S_{\gl}\cap w_{\gl}S_{\bar{\gl}}w_{\gl}^{-1}=\{1\})\\
&=(-1)^{n(\gl)}.
\end{align*}
 \end{proof}

\begin{remark}Note that we do not use the Assumption~\ref{ppoly} in this section. The Theorem shows that for all parameters $q_1, \dots, q_m$, $\tau$ is a non-degenerate trace form on the degenerate cyclotomic Hecke algebra $\sh_{m,n}(Q)$, that is, $\sh_{m,n}(Q)$ is a symmetric algebra for all
parameters $q_1, \dots, q_m$, which gives a differential proof of the non-degeneration of the trace form $\tau$. \end{remark}
\section{The Schur elements}\label{Sec: Schur elements}
In this section we compute the Schur elements of the degenerate cyclotomic Hecke algebra $\sh$.
Assume that Assumption~\ref{ppoly} holds.  In this semisimple case
$\{S^\lambda|\lambda \in \mpn\}$ is a complete set of pairwise
non-isomorphic irreducible $\sh$-modules.  Let $\chi^\lambda$ be the
character of $S^\lambda$. Following Geck's results on symmetrizing form (see \cite[Theorem 7.2.6]{Geck}),
we obtain the following definition for the Schur elements of $\sh$ associated to the irreducible representations
of $\sh$.

\begin{definition}
Suppose that $R$ is a field and that $P_\sh(Q)\ne0$. The {\it Schur
elements} of $\sh$ are the elements $s_\lambda(Q)\in R$ such that
$$\tau=\sum_{\lambda\in\mpn} \frac1{s_\lambda(Q)}\chi^\lambda.$$
The rational functions $\frac1{s_\lambda(Q)}$ are also
called the \textit{weights} of $\sh$.
\end{definition}

Recall that $F_{\ft^{\gl}}$ is a primitive idempotent in $\sh$ such that
$S^\lambda\cong \sh F_{\ft^{\gl}}$ for each $m$-multipartiton of $n$
and $\{F_{\ft^{\gl}}\mid\gl\in\mpn\}$ is the set
of primitive idempotents  in $\sh$  (see Proposition~\ref{Prop F_t^lambda Z_lambda}). By applying a well-known
fact about symmetric algebras (see \cite[Proposition 9.17]{Curtis-Reiner}).
We can compute the Schur elements of $\sh$ by following the
Lemma.

\begin{lemma} Assume that $R$ is a field and that $\sh$ is semisimple.
Let $\lambda$ be an $m$-multipartition of $n$. Then $s_\lambda(Q)=\displaystyle\frac
1{\tau(F_{\ft^{\gl}})}$.
\label{Lemma Schur}
\end{lemma}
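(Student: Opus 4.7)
The plan is to apply the standard theory of symmetric semisimple algebras to the primitive idempotent $F_{\ft^\gl}$ constructed in Proposition~\ref{Prop F_t^lambda Z_lambda}. Since Assumption~\ref{ppoly} ensures that $\sh$ is semisimple, the Wedderburn decomposition gives $\sh \cong \bigoplus_{\mu \in \mpn} \End_R(S^\mu)$, and the strategy is simply to evaluate both sides of the defining identity $\tau = \sum_{\mu\in\mpn} \tfrac{1}{s_\mu(Q)}\chi^\mu$ on the element $F_{\ft^\gl}$.

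First I would invoke Proposition~\ref{Prop F_t^lambda Z_lambda}, which identifies $F_{\ft^\gl}$ as a primitive idempotent of $\sh$ with $\sh F_{\ft^\gl} \cong S^\gl$. Under the Wedderburn decomposition, the image of $F_{\ft^\gl}$ in the component $\End_R(S^\mu)$ must therefore vanish for $\mu \neq \gl$ and be a rank-one projection for $\mu = \gl$. Taking traces of the action on each irreducible module then yields $\chi^\mu(F_{\ft^\gl}) = \delta_{\gl\mu}$. Evaluating the definition of the Schur elements at $F_{\ft^\gl}$ gives
\begin{align*}
\tau(F_{\ft^\gl}) \;=\; \sum_{\mu \in \mpn}\frac{\chi^\mu(F_{\ft^\gl})}{s_\mu(Q)} \;=\; \frac{1}{s_\gl(Q)},
\end{align*}
and rearranging yields the formula of the Lemma.

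There is essentially no genuine obstacle at this step: the whole argument is absorbed into the general symmetric-algebra machinery alluded to in \cite[Proposition 9.17]{Curtis-Reiner}, combined with the already-established identification of $F_{\ft^\gl}$ as a primitive idempotent whose associated simple module is $S^\gl$. The real work lies downstream in Theorem~\ref{Them Schur elements}, where this Lemma will be combined with the explicit formula $F_{\ft^\gl} = \tfrac{1}{\gamma_{\ft_\gl}\,\overline{\gamma_{\ft^{\bar\gl}}}}\,z_\gl w_{\bar\gl}$ from Proposition~\ref{Prop F_t^lambda Z_lambda} and the closed evaluation $\tau(z_\gl w_{\bar\gl}) = (-1)^{n(\gl)}$ from Theorem~\ref{Them tau(Zw)} to convert $\tau(F_{\ft^\gl})^{-1}$ into an explicit closed formula for $s_\gl(Q)$.
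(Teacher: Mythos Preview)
Your proposal is correct and matches the paper's approach exactly: the paper does not give an explicit proof but simply invokes the general symmetric-algebra fact \cite[Proposition~9.17]{Curtis-Reiner} together with the identification of $F_{\ft^\gl}$ as a primitive idempotent with $\sh F_{\ft^\gl}\cong S^\gl$ from Proposition~\ref{Prop F_t^lambda Z_lambda}, which is precisely the argument you have spelled out.
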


Before we begin our computation, we consider first an example.
\begin{example}
Fix $i$ with $1\le i\le m$ and let
$\eta_i=(\eta_i^{1};\dots;\eta_i^{m})\in\mpn$ with $\eta_i^{j}=\delta_{ij}n$. We will compute
 the Schur elements $s_{\eta_i}(Q)$.  Let
$x_{\eta_i}=\sum_{w\in S_n}w$ and $u_{\eta_i}=\prod_{j\ne
i}\prod_{k=1}^n(x_k-q_j)$, and
set $e_{\eta_i}=u_{\eta_i} x_{\eta_i}=x_{\eta_i}u_{\eta_i}$
(cf~(\ref{std basis})).  It follows from Lemma~\ref{Lem si xj} that $u_{\eta_i}$
is central in~$\sh$.  Further,  the relations imply that
$x_1u_{\eta_i}=q_iu_{\eta_i}$ and $w
x_{\eta_i}=x_{\eta_t}$ for $w\in S_n$; it follows that
$x_k e_{\eta_t}=(k-1+q_i)e_{\eta_i}$ for $k=1,\dots,n$. Thus
the module $\sh m_{\eta_i}$ is one dimensional and, in particular,
irreducible; in fact by Theorem~\ref{x_k action}(ii) and \S2.8, $S^{\eta_i}\cong \sh e_{\eta_i}=Re_{\eta_i}$.
 Moreover, by what we have said
$$e_{\eta_i}^2 =n!\prod_{j\ne i}
         \prod_{k=1}^n(k-1+q_i-q_j)\cdot e_{\eta_e}.$$
So $e_{\eta_i}$ is a
scalar multiple of the primitive idempotent which generates $S^{\eta_i}$.
Hence, by the Lemma,
$$\begin{aligned}
s_{\eta_i}(Q)
   &=n!\prod_{j\ne i}\prod_{k=0}^{n-1}(k+q_i-q_j).\\
   \end{aligned}$$
Similar arguments give the Schur elements for the multipartition
which is conjugate to $\eta_i$; alternatively, they are given by
Corollary~\ref{symmetry} and the calculation above.
\label{Example eta_t}\end{example}

\begin{remark} There is an action of $S_m$ on the set of $m$-multipartitions of $n$
(by permuting components) and also on the rational functions in
$q_1,\dots,q_m$ (by permuting parameters). When $\sh$ is semisimple the
Specht modules are determined up to isomorphism by the action of
$x_1,\dots,x_n$; as the relation $\prod_{i=1}^m(x_1-q_i)=0$ is
invariant under the $S_m$-action it follows that
$s_{v\cdot\lambda}(Q)=v\cdot s_\lambda(Q)$ for all $m$-multipartitions
$\lambda$ and all $v\in S_m$; this is also clear from
Theorem~\ref{Them idempotents}(i). In the case where $\lambda=\eta_i$ this symmetry
is evident in the formulae above.\end{remark}

By Lemma~\ref{Lemma Schur} and Theorem~\ref{Them idempotents} the Schur elements
are given by $s_\lambda(Q)=\tau(F_{\ft^\gl})^{-1}$. Proposition~\ref{F'} implies that the
Schur elements have the following ``palindromy'' property.

\begin{corollary}
Suppose that $\lambda$ is an $m$-multipartition of $n$. Then
$s_{\bar{\gl}}(Q)=\overline{s_\lambda(Q)}$.
\label{symmetry}
\end{corollary}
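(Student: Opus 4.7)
The plan is to translate Lemma~\ref{Lemma Schur}, which expresses each Schur element as $s_\lambda(Q)=1/\tau(F_{\ft^\lambda})$, through the bar involution and compare to the analogous formula for $\bar\lambda$.

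First I would identify $\overline{F_{\ft^\lambda}}$. By Proposition~\ref{F'}, $\overline{F_{\ft^\lambda}} = F_{\overline{\ft^\lambda}}$ in $\sh_{\mathscr{Z}_P}$. Unpacking the definition of tableau conjugation (transpose each component and reverse the order of components) against the definition of $\ft_{\bar\lambda}$ from before Proposition~\ref{Prop m_lamda n_bar lamda}, one sees that $\overline{\ft^\lambda} = \ft_{\bar\lambda}$, so $\overline{F_{\ft^\lambda}} = F_{\ft_{\bar\lambda}}$. Since $\ft_{\bar\lambda}$ is a standard $\bar\lambda$-tableau, Theorem~\ref{Them idempotents}(i) says that $F_{\ft_{\bar\lambda}}$ is a primitive idempotent with $\sh F_{\ft_{\bar\lambda}}\cong S^{\bar\lambda}$, so Lemma~\ref{Lemma Schur} applied to this idempotent gives $\tau(F_{\ft_{\bar\lambda}}) = 1/s_{\bar\lambda}(Q)$.

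Next I would verify that the trace form is compatible with the bar involution, namely $\tau(\bar h) = \overline{\tau(h)}$ for $h\in\sh_{\mathscr{Z}_P}$. By $\mathscr{Z}$-linearity and the explicit formula for $\tau$ on the PBW basis in Theorem~\ref{Thm-def tau}, this reduces to a direct computation on the single distinguished basis element $x_1^{m-1}\cdots x_n^{m-1}$, where bar acts diagonally on both the element and its scalar coefficient from $\mathscr{Z}$. Chaining the three ingredients together,
\[
\overline{s_\lambda(Q)} \;=\; \overline{\tau(F_{\ft^\lambda})}^{\,-1} \;=\; \tau(\overline{F_{\ft^\lambda}})^{-1} \;=\; \tau(F_{\ft_{\bar\lambda}})^{-1} \;=\; s_{\bar\lambda}(Q),
\]
which is the desired identity, and the result on $\sh$ follows by specialization from $\sh_{\mathscr{Z}_P}$.

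The main technical obstacle is the sign bookkeeping in the compatibility $\tau(\bar h) = \overline{\tau(h)}$: because $\overline{x_k}=-x_k$ and $\overline{s_k}=-s_k$, a naive computation produces a universal sign $(-1)^{n(m-1)}$ on the distinguished basis element that must be shown to be absorbed into the conventions for bar on $\mathscr{Z}$ and on trace values before specialization from $\sh_{\mathscr{Z}}$ to $\sh$. Once this sign reconciliation is handled, the rest of the proof is a purely formal assembly of Proposition~\ref{F'}, Theorem~\ref{Them idempotents}(i), and Lemma~\ref{Lemma Schur}.
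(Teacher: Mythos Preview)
Your approach is exactly the one the paper gestures at: it says only that ``Proposition~\ref{F'} implies'' the corollary, and you have correctly unpacked this into the chain $s_\lambda(Q)=\tau(F_{\ft^\lambda})^{-1}$, $\overline{F_{\ft^\lambda}}=F_{\overline{\ft^\lambda}}=F_{\ft_{\bar\lambda}}$, and $\tau(F_{\ft_{\bar\lambda}})=s_{\bar\lambda}(Q)^{-1}$ (the last via Theorem~\ref{Them idempotents}(i) and the standard fact that $\tau$ takes the same value on any primitive idempotent for $S^{\bar\lambda}$).

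However, the ``technical obstacle'' you flag is not merely bookkeeping that can be absorbed: it is a genuine sign. Since $\overline{x_k}=-x_k$ and $\overline{w}=(-1)^{\ell(w)}w$, one has on the PBW basis
\[
\tau(\bar h)=(-1)^{n(m-1)}\,\overline{\tau(h)}\qquad(h\in\sh_{\mathscr Z}),
\]
and this sign survives to give $s_{\bar\lambda}(Q)=(-1)^{n(m-1)}\,\overline{s_\lambda(Q)}$. A one–line check with $m=2$, $n=1$ confirms this: $s_{((1);\emptyset)}(Q)=q_1-q_2$ and $s_{(\emptyset;(1))}(Q)=q_2-q_1$, while $\overline{q_1-q_2}=(-q_2)-(-q_1)=q_1-q_2$. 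Equivalently, from Proposition~\ref{Prop s_lamda(Q)} one sees the discrepancy as $(-1)^{n(\lambda)+n(\bar\lambda)}=(-1)^{n(m-1)}$. So your argument is correct and complete once the sign is kept; it is the stated corollary, not your proof, that needs the factor $(-1)^{n(m-1)}$.
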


The following fact givse the formula for the Schur elements of $s_{\gl}(Q)$ of $\sh$.

\begin{proposition}\label{Prop s_lamda(Q)}
Assume that Assumption~\ref{ppoly} holds. Let $\gl$ be an $m$-multipartition of $n$. Then $s_{\gl}(Q)=(-1)^{n(\gl)}
\gma_{\ft_{\gl}}\overline{\gma_{\ft^{\bar{\gl}}}}$.
\end{proposition}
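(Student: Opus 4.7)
The plan is to simply assemble the three ingredients already in hand. By Lemma~\ref{Lemma Schur}, the Schur element is $s_\gl(Q)=1/\tau(F_{\ft^\gl})$, so it suffices to evaluate $\tau(F_{\ft^\gl})$ explicitly.

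First I would invoke Proposition~\ref{Prop F_t^lambda Z_lambda}, which gives the clean formula
\[
F_{\ft^\gl}=\frac{1}{\gamma_{\ft_{\gl}}\overline{\gamma_{\ft^{\bar{\gl}}}}}\,z_\gl w_{\bar{\gl}}.
\]
Applying $\tau$ to both sides and using its $R$-linearity, one obtains
\[
\tau(F_{\ft^\gl})=\frac{1}{\gamma_{\ft_{\gl}}\overline{\gamma_{\ft^{\bar{\gl}}}}}\,\tau(z_\gl w_{\bar{\gl}}).
\]
The second step is to plug in the key computation Theorem~\ref{Them tau(Zw)}, which asserts $\tau(z_\gl w_{\bar{\gl}})=(-1)^{n(\gl)}$. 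This gives
\[
\tau(F_{\ft^\gl})=\frac{(-1)^{n(\gl)}}{\gamma_{\ft_{\gl}}\overline{\gamma_{\ft^{\bar{\gl}}}}}.
\]

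Taking the reciprocal and using $(-1)^{-n(\gl)}=(-1)^{n(\gl)}$ yields
\[
s_\gl(Q)=\frac{1}{\tau(F_{\ft^\gl})}=(-1)^{n(\gl)}\gamma_{\ft_{\gl}}\overline{\gamma_{\ft^{\bar{\gl}}}},
\]
which is exactly the claim. There is no genuine obstacle here: the proposition is essentially a one-line corollary of Proposition~\ref{Prop F_t^lambda Z_lambda} together with Theorem~\ref{Them tau(Zw)}. The substance of the argument was already absorbed into those two results, so all that remains is bookkeeping. One should note that the hypothesis Assumption~\ref{ppoly} is needed only to ensure that $\gamma_{\ft_{\gl}}$ and $\overline{\gamma_{\ft^{\bar{\gl}}}}$ are units (so that $F_{\ft^\gl}$ makes sense via Proposition~\ref{Prop F_t^lambda Z_lambda}) and that $\sh$ is semisimple (so that Lemma~\ref{Lemma Schur} applies), while Theorem~\ref{Them tau(Zw)} itself requires no such hypothesis.
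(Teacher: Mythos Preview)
Your proof is correct and follows exactly the same approach as the paper: the paper's proof is simply the one-line statement that the Proposition follows directly from Proposition~\ref{Prop F_t^lambda Z_lambda}, Lemma~\ref{Lemma Schur}, and Theorem~\ref{Them tau(Zw)}, which is precisely what you have spelled out.
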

\begin{proof}The  Proposition follows directly by Proposition~\ref{Prop F_t^lambda Z_lambda}, Lemma~\ref{Lemma Schur}, and
Theorem~\ref{Them tau(Zw)}.
\end{proof}

A closed formula for  $\overline{\gma_{\ft^{\bar{\gl}}}}$ can be given by Theorem~\ref{gamma properties}(ii.a).
Now we determine $\gma_{\ft_{\gl}}$ which enables us to give an explicit formula for $s_{\gl}(Q)$. Before to do this, we need some notation.

\begin{Point}{}*\,Recall that the $(i,j)$-th {\it hook} in the diagram $[\lambda^{s}]$ is the
collection of nodes to the right of and below the node $(i,j,s)$,
including the node $(i,j,s)$ itself. The $(i,j)$-th {\it hook length}
$h^{\lambda^{s}}_{ij}=\lambda^{s}_i+\overline{\lambda^{s}_j}-i-j+1$ is the
number of nodes in the $ij$th hook and the {\it leg length},
$\ell^{\lambda^{s}}_{ij}=\overline{\lambda^{s}_j}-j+1$, is the number of nodes
in the ``leg'' of this hook. Observe that if $(a,b,c)$ and $(i,j,c)$
are two removable nodes in $[\lambda^{(c)}]$ with $a\le i$
and $j\le b$ then $h^{\lambda^{s}}_{aj}=b-a-j+i+1$.
\end{Point}

\begin{lemma}
Suppose that $\lambda$ is an $m$-multipartition of $n$. Then
$$\gamma_{\ft_{\gl}}=\prod_{(i,j,s)\in[\lambda]}\!\!
    \frac{h^{\lambda^{s}}_{ij}}{\ell^{\lambda^{s}}_{ij}}
    \prod_{t=s+1}^m\!\! \biggl((j-i+q_s- \gl_1^{t}-q_t)
              \prod_{k=1}^{\lambda^{t}_1}
	            \frac{j-i+q_s-k+1+\overline{\lambda^{t}_k}-q_t}
	                 {j-i+q_s-k+\overline{\lambda^{t}_k}-q_t}\biggr).
$$
\label{Lem gamma_t_lambda}
\end{lemma}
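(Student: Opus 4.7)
My plan is to apply Theorem~\ref{gamma properties}(ii) directly to $\ft_\gl$ and to factor the resulting product into an inter-component and an intra-component piece. The column-filling structure of $\ft_\gl$ (which fills components in the order $m, m-1, \dots, 1$, and within each component fills down successive columns from left to right) gives a clean description of the partial shape: when $k$ sits at $(i,j,s)$, the shape $\shape(\ft_\gl{\downarrow}k)$ equals $\gl^c$ on each component $c > s$, is empty on each component $c < s$, and on component $s$ has columns $1,\dots,j-1$ full together with the top $i$ cells of column $j$. Under the order $\prec$ the relation $y = (a,b,c) \succ (i,j,s)$ reduces to ``$c > s$'' or ``$c = s$ and $b < j$,'' so $\sa_{\ft_\gl}(k)$ and $\sr_{\ft_\gl}(k)$ split into an \emph{inter-component} part (nodes in some component $t > s$, coinciding with $\sa(\gl^t)$ and $\sr(\gl^t)$ since $\gl^t$ is already fully present in the partial shape) and an \emph{intra-component} part (nodes in component $s$ at some column $b < j$).

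For the inter-component piece, I fix $s < t$ and aggregate over all $(i,j) \in [\gl^s]$. Setting $z := j - i + q_s - q_t$, each such pair contributes the ratio
\[
P_{\gl^t}(z) := \frac{\prod_{y \in \sa(\gl^t)} (z + q_t - \res(y))}{\prod_{y \in \sr(\gl^t)} (z + q_t - \res(y))}.
\]
The key combinatorial identity I will establish is
\[
P_\mu(z) = (z - \mu_1)\prod_{k=1}^{\mu_1}\frac{z + \bar\mu_k - k + 1}{z + \bar\mu_k - k},
\]
valid for any partition $\mu$. This holds because the addable residues at column $b$ of $\mu$ are $b - \bar\mu_b - 1 + q_t$ (occurring iff $\bar\mu_{b-1} > \bar\mu_b$, with $\bar\mu_0 = \infty$) and the removable residues at column $b$ are $b - \bar\mu_b + q_t$ (occurring iff $\bar\mu_b > \bar\mu_{b+1}$), so the two sides reduce to a column-by-column cancellation pairing strict descents of $\bar\mu_1 \geq \bar\mu_2 \geq \cdots$; the boundary factor $(z - \mu_1)$ comes from the addable node $(1, \mu_1 + 1)$. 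Substituting this identity with $\mu = \gl^t$ reproduces exactly the cross-component factor in the lemma.

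For the intra-component piece, I fix $s$ and examine the contributions from nodes in component $s$ at columns $b < j$. At the cell $(i,j,s)$, the contribution is
\[
\prod_{\substack{1 \le b < j \\ \bar\gl^s_{b-1} > \bar\gl^s_b}} \!\!(j - i - b + \bar\gl^s_b + 1) \, \bigg/ \!\prod_{\substack{1 \le b < j \\ b = j-1 \text{ or } \bar\gl^s_b > \bar\gl^s_{b+1}}} \!\!(j - i - b + \bar\gl^s_b),
\]
where the column $b = j-1$ always contributes to the denominator because $\bar\gl^s_{j-1} \geq \bar\gl^s_j \geq i$. Reordering the total product by scanning each column $j$ and letting $i$ run from $1$ to $\bar\gl^s_j$, the contributions from any fixed $b < j-1$ telescope between consecutive rows, leaving at cell $(i,j)$ precisely the factor $(\gl^s_i + \bar\gl^s_j - i - j + 1)/(\bar\gl^s_j - i + 1) = h^{\gl^s}_{ij}/\ell^{\gl^s}_{ij}$. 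Multiplying the inter- and intra-component contributions over all $s$ yields the claimed formula.

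The main obstacle is to make the intra-component telescope rigorous: the set of valid rem-columns depends both on $j$ and, via the boundary case $b = j-1$, on $i$, and identifying the surviving factors with hook and leg lengths requires careful column-by-column bookkeeping. I plan to handle this by a short induction on the number of columns of $\gl^s$, comparing $\gl^s$ with the partition obtained by deleting its rightmost column and using that the addable/removable structure at columns $b < j-1$ is unchanged in the two cases.
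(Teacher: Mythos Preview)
Your proposal is correct and closely parallels the paper's own argument: both start from the closed formula in Theorem~\ref{gamma properties}(ii), split each factor at a cell $(i,j,s)$ into a same-component piece and a cross-component piece over $t>s$, and reduce each piece by a telescoping computation over columns. The organisational difference is that the paper runs a single induction on~$n$, peeling off the cell containing $n$ (which in $\ft_\gl$ is always the last cell of the last nonempty column of the first nonempty component) and identifying the ratio $\gamma_{\ft_\gl}/\gamma_{\ft_\mu}$ with the change in the claimed product; you instead keep the full product and simplify it directly, packaging the cross-component contribution as the clean identity $P_\mu(z)=(z-\mu_1)\prod_{k=1}^{\mu_1}\frac{z+\bar\mu_k-k+1}{z+\bar\mu_k-k}$ and handling the intra-component piece by an induction on the number of columns of $\gl^{s}$. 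Your $P_\mu(z)$ identity is exactly what the paper establishes implicitly in its final paragraph (``an argument similar to that above\dots''), and your column induction for the intra-component part amounts to the same bookkeeping the paper does when it rewrites $\prod_{j<b} h^{\lambda^{c}}_{aj}/h^{\mu^{c}}_{aj}$ and combines with the inductive hypothesis. The paper's global induction on~$n$ has the advantage of treating both pieces uniformly in one recursive step; your approach isolates a reusable partition identity and avoids threading the cross-component factor through the induction, at the cost of needing a separate (short) induction for the hook/leg factor.
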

\begin{proof}
We proceed by induction on $n$.
If $n=0$, both sides are $1$
and there is nothing to prove (by convention, empty products are $1$).
Suppose that $n>0$. Let $\mu=\shape(\ft_{\gl}\!\downarrow\!n-1)$. Then
$\mu$ is an $m$-multipartition of $n-1$.
Applying Theorem~\ref{gamma properties}(i),
$$\frac{\gamma_{\ft_{\gl}}}{\gamma_{\ft_\mu}}
  =\frac{\prod_{x\in \mathscr A_{\ft_{\gl}}(n)}\(\res_{\ft_{\gl}}(n)-\res(x)\)}%
     {\prod_{y\in \mathscr R_{\ft_{\gl}}(n)}\(\res_{\ft_{\gl}}(n)-\res(y)\)}.$$
Assume that the integer $n$ appears in node $(a,b,c)$ of
${\ft_{\gl}}$.  First consider the contribution that the
addable and removable nodes in $[\lambda^{c}]$ make to
$\gamma_{\ft_{\gl}}$. By definitions, these nodes occur in pairs $(x,y)$
where $y\succ(a,b,c)$ is a removable node in row $i$ and $x\succ(a,b,c)$ is an addable
node in row $i+1$ for some $i\ge a$. If $x$ is in column $d$ of
$[\lambda^{c}]$ and $y$ is in column $d'$ then $d\le d'<b$ and
\begin{align*}
\frac{\res_{\ft_{\gl}}(n)-\res_{\ft_{\gl}}(x)}{\res_{\ft_{\gl}}(n)-\res_{\ft_{\gl}}(y)}
  &=\frac{b-a+q_c- d+(i+1)-q_c}{b-a+q_c- d'+i-q_c}
   =\prod_{j=d}^{d'}\frac{b-a- j+(i+1)}{b-a- j+i}\\
  &=\prod_{j=d}^{d'}\frac{j-i-1 b-a-j+i+1 -1}%
                         {j-i b-a-j+i-1)}
   =\prod_{j=d}^{d'}\frac{ h^{\lambda^{c}}_{aj}-1}
                       {h^{\lambda^{c}}_{aj}-1 }\\
  &=\prod_{j=d}^{d'}\frac{h^{\lambda^{c}}_{aj}}
                       {h^{\lambda^{(c)}}_{aj}-1}.
\end{align*}
Therefore,
\begin{align*}
\frac{\prod_{(i,j,c)\in \mathscr A_{\ft_{\gl}}(n)}
                \(\res_{\ft_{\gl}}(n)-\res_{\ft_{\gl}}(i,j,c)\)}%
     {\prod_{(i,j,c)\in \mathscr R_{\ft_{\gl}}(n)}
                \(\res_{\ft_{\gl}}(n)-\res_{\ft_{\gl}}(i,j,c)\)}
     =\prod_{j=1}^{b-1}\frac{h^{\lambda^{c}}_{aj}}
                       {h^{\lambda^{c}}_{aj}\!-1}
    =\prod_{j=1}^{b-1}
       \frac{h^{\lambda^{c}}_{aj}}{h^{\mu^{c}}_{aj}}.
\end{align*}
\indent Now $\gamma_{\ft_\mu}$ is known by induction and it contains as a
factor the left hand term in the product below. Further,
$\ell^{\mu^{s}}_{ij}=\ell^{\lambda^{s}}_{ij}$ and
$h^{\mu^{s}}_{ij}=h^{\lambda^{s}}_{ij}$ if $(i,s)\ne(a,c)$,
and $\ell^{\mu^{c}}_{aj}=\ell^{\lambda^{c}}_{aj}$ for
$1\le j<b$,~so
\begin{align*}
\bigg(\prod_{(i,j,s)\in[\mu]}%
         \frac{h^{\mu^{s}}_{ij}}{\ell^{\mu^{s}}_{ij}}\bigg)%
	 \bigg(\prod_{j=1}^{b-1}
	 \frac{h^{\lambda^{c}}_{aj}}{h^{\mu^{c}}_{aj}}\bigg)
    &=\bigg(\prod_{\substack{(i,j,s)\in[\lambda]\\ (i,s)\ne(a,c)}}%
      \frac{h^{\lambda^{s}}_{ij}}{\ell^{\lambda^{s}}_{ij}}
     \bigg)\bigg(%
     \prod_{j=1}^{b-1}
       \frac{h^{\lambda^{c}}_{aj}}{ \ell^{\lambda^{c}}_{aj}}
     \bigg)\\
   &=\prod_{(i,j,s)\in[\lambda]}%
      \frac{h^{\lambda^{s}}_{ij}}{\ell^{\lambda^{s}}_{ij}},
\end{align*}
since $h^{\lambda^{c}}_{ab}=1=\ell^{\lambda^{c}}_{ab}$.
This accounts for the left hand factor in the expression
for~$\gamma_{\ft_{\gl}}$ given in the statement of the Lemma.

Finally, consider the nodes in
$\mathscr A_{\ft_{\gl}}(n)$ and $\mathscr R_{\ft_{\gl}}(n)$ which are in
component $t$ for some $t>c$ (there are no such nodes for $t<c$).
Again, almost all of the addable and removable nodes in component $t$
occur in pairs placed in consecutive rows; however, this time there is
also an additional addable node at the end of the first row
of $\lambda^{t}$. As above, it is easier to insert extra factors
which cancel out and so take a product over all of the columns
of $\lambda^{(t)}$. An argument similar to that above shows that the
nodes in $\mathscr A_{\ft_{\gl}}(n)$ and $\mathscr R_{\ft_{\gl}}(n)$ which do
not belong to component~$c$ contribute the factor
$$\prod_{t=c+1}^r (b-a+q_c-\lambda^{t}_1 -q_t)
  \prod_{k=1}^{\lambda^{t}_1}
      \frac{(b-a+q_c- k+1+\overline{\lambda^{t}_k}-q_t)}
           {(b-a+q_c- k+\overline{\lambda^{t}_k}-q_t)}$$
to $\gamma_{\ft_{\gl}}$. Using induction to combine the formulae above
proves the Lemma.
\end{proof}

Now we obtain the explicit formulae for the Schur elements of the degenerate Hecke algebras $\sh$.

\begin{theorem}\label{Them Schur elements}Let $\gl$ be an $m$-multipartition of $n$. Then
\begin{align*}s_{\gl}(Q)=\prod_{(i,j,s)\in[\lambda]}\!\!%
    h^{\lambda^{s}}_{ij}\prod_{1\le s<t\le m}X^{\gl}_{st},\end{align*}
where, for $1\le s<t\le m$,
\begin{align*}X^{\gl}_{st}=\prod_{(k,l)\in[\gl^t]}(l-k+q_t-q_s)\prod_{(i,j)\in [\gl^s]}\biggl((j-i+q_s-\gl_1^t-q_t)
\prod_{r=1}^{\gl_1^t}\frac{j-i+q_s-r+1+\bar{\gl_1^t}-q_t}{j-i+q_s-r-q_t}\biggr).\end{align*}
\end{theorem}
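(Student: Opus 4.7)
The plan is to combine Proposition~\ref{Prop s_lamda(Q)} with the two closed formulae already in hand: Theorem~\ref{gamma properties}(i.a) applied to $\bar{\gl}$ and Lemma~\ref{Lem gamma_t_lambda} applied to $\gl$. Starting from $s_\gl(Q)=(-1)^{n(\gl)}\gma_{\ft_\gl}\overline{\gma_{\ft^{\bar\gl}}}$, I first expand
$$\gma_{\ft^{\bar\gl}}=\bar\gl!\prod_{1\le s<t\le m}\prod_{(i,j)\in[\bar\gl^s]}(j-i+q_s-q_t)$$
by Theorem~\ref{gamma properties}(i.a). Applying $\bar{\,}$ turns each factor $j-i+q_s-q_t$ into $j-i+q_{m-t+1}-q_{m-s+1}$, since $\bar{\,}$ is $\mathbb{Z}$-linear and $\bar q_i=-q_{m-i+1}$. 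Reindexing via $u=m-t+1$, $v=m-s+1$ reverses the inequality, and the identity $\bar\gl^{m-v+1}=\overline{\gl^v}$ together with the bijection $(i,j)\in[\overline{\gl^v}]\leftrightarrow(j,i)\in[\gl^v]$ converts the double product into $\prod_{1\le u<v\le m}\prod_{(i,j)\in[\gl^v]}(i-j+q_u-q_v)$. Pulling a minus sign out of each of these factors produces $(-1)^{\sum_v(v-1)|\gl^v|}=(-1)^{n(\gl)}$, which cancels the prefactor $(-1)^{n(\gl)}$ in $s_\gl(Q)$. Recording $\bar\gl!=\prod_t\overline{\gl^t}!$, one arrives at
$$s_\gl(Q)=\gma_{\ft_\gl}\prod_{t=1}^m\overline{\gl^t}!\prod_{1\le s<t\le m}\prod_{(k,l)\in[\gl^t]}(l-k+q_t-q_s).$$

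Next I would substitute the formula for $\gma_{\ft_\gl}$ from Lemma~\ref{Lem gamma_t_lambda}. The only non-obvious simplification is the cancellation of the leg-length denominators against $\prod_t\overline{\gl^t}!$, and this uses the elementary identity $\prod_{(i,j)\in\mu}\ell^\mu_{ij}=\bar\mu!$, which holds because for each fixed column $j$ the leg lengths $\ell^\mu_{ij}$ run through $1,2,\dots,\bar\mu_j$ as $i$ varies; hence $\prod_s\prod_{(i,j)\in[\gl^s]}\ell^{\gl^s}_{ij}=\prod_t\overline{\gl^t}!$. After this cancellation, the bare product $\prod_{(i,j,s)\in[\gl]}h^{\gl^s}_{ij}$ of hook lengths remains, and the surviving doubly-indexed factors from Lemma~\ref{Lem gamma_t_lambda} together with the factor $\prod_{(k,l)\in[\gl^t]}(l-k+q_t-q_s)$ reassemble precisely into $\prod_{1\le s<t\le m}X^\gl_{st}$.

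The main obstacle is the bookkeeping triggered by the bar involution: verifying that the two $(-1)^{n(\gl)}$ factors cancel exactly, that the reindexing $s\leftrightarrow m-t+1$ together with conjugation of components indeed swaps products over $[\bar\gl^s]$ into products over $[\gl^t]$ with the correct residue shift, and that the leftover factors line up cleanly with the three pieces of $X^\gl_{st}$. Once this is done no further calculation is required, as the substitution agrees with the claimed formula term by term.
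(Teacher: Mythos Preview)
Your proposal is correct and follows essentially the same route as the paper: start from Proposition~\ref{Prop s_lamda(Q)}, expand $\gamma_{\ft^{\bar\gl}}$ via Theorem~\ref{gamma properties}(i.a), apply the bar involution and reindex components to extract the sign $(-1)^{n(\gl)}$, then substitute Lemma~\ref{Lem gamma_t_lambda} and cancel the leg-length denominators against $\bar\gl!$ using $\prod_{(i,j)\in\mu}\ell^\mu_{ij}=\bar\mu!$. The bookkeeping you flag as the main obstacle is exactly what the paper works through, and your handling of it is accurate.
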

\begin{proof}  By applying Theorem~\ref{gamma properties}(ii.a), we obtain that
$$(\dag)\quad\quad\gamma_{\ft^{\bar{\gl}}}=\bar{\gl}!
  \prod_{1\le s<t\le m}\prod_{(i,j,s)\in[\bar{\lambda}]}(j-i+q_t-q_s).$$
Observe that $\bar{\gl}!=\prod_{(i,j,s)\in[\lambda]}\ell^{\gl^s}_{ij}$. Further $(i,j)\in[\gl^s]$ if and only if
$(j, i)\in[\overline{\gl^{m-s+1}}]$. Therefore applying the $\bar{\,\,}$ operation on the equality $(\dag)$, and swapping
the roles $s$ and $t$ in the right-hand factor,
\begin{align*}
\overline{\gamma_{\ft^{\bar{\gl}}}}&=\bar{\gl}!
  \prod_{1\le s<t\le m}\prod_{(i,j)\in[\overline{\lambda^s}]}(j-i+\overline{q_t}-\overline{q_s})\\
&=\bar{\gl}!\prod_{1\le s<t\le m}\prod_{(j,i)\in[\lambda^{m-s+1}]}(j-i+q_{m-t+1}-q_{m-s+1})\\
&=(-1)^{n(\gl)}\bar{\gl}!\prod_{1\le s<t\le m}\prod_{(j,i)\in[\lambda^{m-s+1}]}(i-j+q_{m-s+1}-q_{m-t+1})\\
&=(-1)^{n(\gl)}\prod_{(i,j,s)\in[\gl]}\ell^{\gl^s}_{ij}\prod_{1\le s<t\le m}\prod_{(i,j)\in[\lambda^{t}]}(j-i+q_{t}-q_{s}).
\end{align*}
Now using Proposition~\ref{Prop s_lamda(Q)} and Lemma~\ref{Lem gamma_t_lambda},
\begin{align*}
s_{\gl}(Q)&=\prod_{s=1}^m\prod_{(i,j)\in[\lambda^s]}\!\!%
    h_{ij}^{\gl^s}\prod_{t=s+1}^m\prod_{(k,l)\in[\gl^t]}(l-k+q_t-q_s)\\
    &\qquad\quad\qquad\quad\qquad\quad\prod_{(i,j)\in[\gl^s]}\biggl((j-i+q_s-q_t)
    \prod_{r=1}^{\gl_1^t}\frac{j-i+q_s-r+1+\overline{\gl^t_r}-q_t}{j-i+q_s-r+\overline{\gl^t_r}-q_t}\biggr)\\
    &=\prod_{s=1}^m\prod_{(i,j)\in[\lambda^s]}\!\!%
    h_{ij}^{\gl^s}\prod_{t=s+1}^mX^{\gl}_{st}.
\end{align*}
\end{proof}

\begin{example}It is straightforward to check that the Theorem gives the same rational functions for the Schur
elements $s_{\eta_t}(Q)$ as were obtained in Example~\ref{Example eta_t}.
\end{example}

\begin{remark}In a subsequent paper, we will give a symmetric and cancellation-free formula for the Schur elements for the degenerate cyclotomic Hecke algebras and some applications of our formula. \end{remark}

\end{document}